\newtheorem{lemma}{Lemma}[section]
\newtheorem{proposition}{Proposition}[section]
\newtheorem{corollary}{Corollary}[section]
\newtheorem{remark}{Remark}[section]
\newtheorem{claim}{Claim}[section]
\newtheorem{theorem}{Theorem}[section]
\newcommand{\pr}{\mathbb{P}}
\newcommand{\esp}{\mathbb{E}}
\newcommand{\var}{ \mathbb{V}ar }
\newcommand{\N}{ \mathbb{N} }
\newcommand{\Z}{ \mathbb{Z}}
\newcommand{\R}{ \mathbb{R} }
\begin{document}

\title[Convergence of the random directed forest to the Brownian web]{A version of the random directed forest and its convergence to the Brownian web.}

\author{Glauco Valle and Leonel Zuazn\'abar.}

\address{
\newline
Glauco Valle
\newline
Universidade Federal do Rio de Janeiro, Instituto de Matem\'atica,
\newline  Caixa Postal 68530, 21945-970, Rio de Janeiro, Brasil.
\newline
e-mail: {\rm \texttt{glauco.valle@im.ufrj.br}}
\newline
\newline
Leonel Zuazn\'abar
\newline
Universidade de S\~ao Paulo,
\newline  R. do Mat\~ao, 1010 - Butant\~a, S\~ao Paulo - SP, CEP: 05508-090, Brasil.
\newline
e-mail: {\rm \texttt{lzuaznabar@ime.usp.br}}
}

\subjclass[2010]{primary 60K35}
\keywords{coalescing random walks, Brownian web, invariance principle, diffusive scaling limit, random directed forest}
\thanks{G. Valle was supported by CNPq grant 308006/2018-6 and FAPERJ grant E-26/202.636/2019. L. Zuazn\'abar was supported by CAPES}

\date{}

\maketitle

\begin{abstract}
Several authors have studied convergence in distribution to the Brownian web under diffusive scaling of systems of Markovian random walks. In a paper by R. Roy, K. Saha and A. Sarkar, convergence to the Brownian web is proved for a system of coalescing random paths - the Random Directed Forest- which are not Markovian. Paths in the Random Directed Forest do not cross each other before coalescence. Here we study a variation of the Random Directed Forest where paths can cross each other and prove convergence to the Brownian web. This provides an example of how the techniques to prove convergence to the Brownian web for systems allowing crossings can be applied to non-Markovian systems.
\end{abstract}

\section{Introduction.}

Several authors have studied convergence in distribution to the Brownian web, for instance  \cite{bmsv}, \cite{coletti2014convergence}, \cite{ffw}, \cite{fontes2004brownian} and \cite{fontes2015scaling}, \cite{roy2013random} to mention some works. The aim of most of these papers is the understanding of the universality class associated to the Brownian web. The Brownian web was formally introduced in \cite{fontes2004brownian}. You can find a review of the Brownian web and how it arises as the scaling limit of various one-dimensional models in \cite{schertzer2015brownian}.

In \cite{roy2013random} the authors study the Random Directed Forest, which is a system of coalescing space and time random paths on $\Z^2$ as we now describe. Suppose that the first coordinate of a point in $\Z^2$ represents space and the second one time. We start a space-time random path in each point of $\Z^2$. The path starting at $u\in \Z^2$ evolves as follows: every point in $\Z^2$ is open with some probability $p$ or closed with probability $1-p$ independently of the other points. We say that a point $v = (\tilde{x},\tilde{t})\in \Z^2$ is above $u = (x,t)$ if $\tilde{t}>t$. If the path is at space-time position $(x,t)\in\mathbb{Z}^2$, then it jumps to the nearest open point in the $L_1$ norm above $(x,t)$, if this nearest open point is unique. If it is not unique then the choice to decide where the path has to jump to is made uniformly over the nearest open points (see Figure \ref{fig:1}). Notice that two paths cannot cross each other and must coalesce when they meet each other. Futhermore, after a jump, it is possible that we have information about the "future" ahead of the position of the path; that is to say, maybe we know if some points above the current position of the path are open or closed. That is why we get a system of coalescing non-Markovian random paths. The random collection of linearly interpolated trajectories induced by the discrete random paths as above starting on every $u \in \Z^2$ is called the Random Directed Forest. 

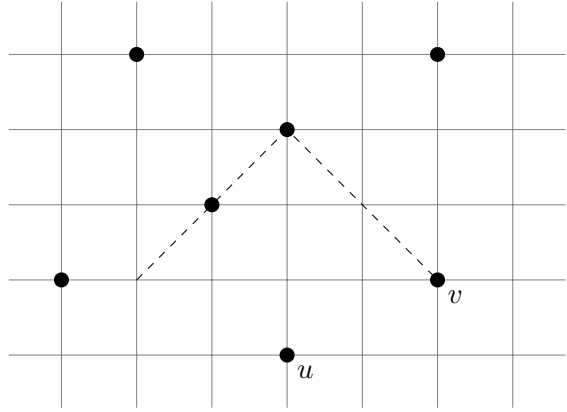
\begin{figure}
\label{fig:1}
\begin{tikzpicture}
\draw [step= 1cm,gray, very thin] (-0.7,-0.7) grid (6.7,4.7);
\fill[black] (3,0) circle (1mm) node[below right]{$u$}; \fill[black] (2,2) circle (1mm);\fill[black] (3,3) circle (1mm);\fill[black] (5,1) circle (1mm) node[below right]{$v$};\fill[black] (5,4) circle (1mm);\fill[black] (0,1) circle (1mm);\fill[black] (1,4) circle (1mm);
\draw[black,dashed] (1,1)--(3,3) -- (5,1);
\end{tikzpicture}
\caption{Open points in $\Z^2$ are marked by black dots. Notice that the closest open points above $u$ in the $L_1$ distance are those connected by the dashed line. Hence the path starting at $u$ moves to one of these points connected by the dashed line chosen uniformly among them; for instance it could be $v$.}
\end{figure}

R. Roy, K. Saha and A. Sarkar in \cite{roy2013random} proved that under diffusive scaling the Random Directed Forest converges in distribution to the Brownian web. Our initial aim was to consider a natural generalization of the Random Directed Forest that allows crossings before coalescence (as was studied in \cite{coletti2014convergence}) in the following way: a path does not necessarily jump to one of the closest open points in $L_1$ distance above the current position, but alternatively it jumps to a randomly chosen $L_1$ level set above the current position. 

In our version of the Random Directed Forest each path jumps to a randomly chosen $L_1$ level set above its current position but we use a different choice mechanism on the chosen level set. The jump should be made to the upmost open site on that chosen level set. Although we get a well defined system by imposing an uniform choice as in \cite{roy2013random}, we were not able to prove convergence to the Brownian web in this case. The problem here was to build a regeneration structure similar to that presented in \cite{roy2013random} which is the strategy to deal with the non-Markovianity.

Let us be more precise about the definition of our variation of the Random Directed Forest that allows crossings before coalescence. As before every point in $\Z^2$ is open with some probability $p$ or closed with probability $1-p$ independently of each other. Let $\{W_u:u\in\Z^2\}$ be an i.i.d. family of random variables supported on the set of positive integers. We will call the $k$-th level of $u=(u(1),u(2)) \in \Z^2$ the following set
\begin{align}\label{L}
    L(u,k):= \big\{ v=(v(1),v(2))\in\Z^2: v(2)>u(2) \text{ and } ||v-u||_1=k\big\}, 
\end{align}
where $||u||_1:=|u(1)|+|u(2)|$. The level set $L(u,k)$ is called open if it has at least one open point. Consider that the path moves to the highest open point in the $W_u$-th open level. If the path has two options to jump to then it makes an uniform choice. See Figure 2 as an example.

\begin{figure}\label{fig:2}
\begin{tikzpicture}
\draw [step= 1cm,gray, very thin] (-0.7,-0.7) grid (6.7,4.7);
\fill[black] (3,0) circle (1mm) node[below right]{$u$}; \fill[black] (2,2) circle (1mm);\fill[black] (3,3) circle (1mm);\fill[black] (5,1) circle (1mm);\fill[black] (5,4) circle (1mm);\fill[black] (0,1) circle (1mm) node [below left]{$v$};\fill[black] (1,4) circle (1mm);
\draw[black,dashed] (1,1)--(3,3) -- (5,1);
\draw[black,dashed] (0,1)--(3,4) -- (6,1);
\end{tikzpicture}
\caption{Notice that points connected by the dashed lines are the first and the second open levels of $u$. If $W_u=2$, for instance, the path starting in $u$ jumps to $v$.}
\end{figure}
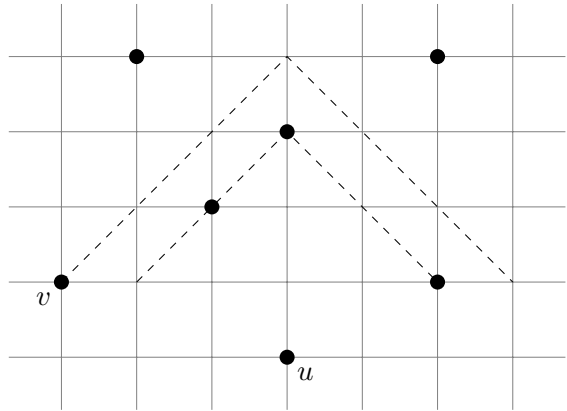

As the Directed Random Forest we now have a system of non-Markovian walks but in this case the paths can cross each other. Our goal is to prove convergence in distribution to the Brownian web under diffusive scaling for the closure of the system of linearly interpolated paths. See Theorem \ref{goal}. 

\medskip

In the next section, Section \ref{sec:main}, we are going to define formally the variation of the Random Directed Forest and state the convergence to the Brownian web. By the end of the same Section \ref{sec:main} we shall explain how the rest of the paper is divided in accordance with the steps that should be taken to prove the convergence result.

\bigskip

\section{The directed forest and the Brownian web.}
\label{sec:main}

Let us define formally the process described in the previous section. First let us fix some notation that will be used in the paper. We will denote by $\Z_+:=\{0,1,2,3,\dots\},\Z_{-}:=\{0,-1,-2,-3,\dots\}$ and $\N:=\{1,2,3,\dots\}$. Consider the following random variables:
\begin{enumerate}
\item [(i)] Let $K$ be a positive constant and  $(W_u)_{u\in\Z^2}$ be a family of i.i.d. random variables with support on $\N$ such that $\pr[W_u=1]>0$ and $\pr[W_u\leq K]=1$. Denote by $\pr_W$ the induced probability on $\N^{\Z^2}$.
\item [(ii)] Let $\{U_v:v\in\Z^2\}$ be a family of i.i.d. uniform random variables in $(0,1)$. Denote by $\pr_U$ the induced probability on $(0,1)^{\Z^2}$.
\end{enumerate}
We suppose that the two families above are independent of each other and thus have a joint distribution given by the product probability $\pr:=\pr_W\times\pr_U$ on the space $\N^{\Z^2}\times(0,1)^{\Z^2}$. In accordance with interacting particle systems terminology, points in $\mathbb{Z}^2$ will also be called sites and a configuration in $\N^{\Z^2}\times(0,1)^{\Z^2}$ will be called an environment for the system of random paths we define below. 

Fix some $p\in (0,1)$. We write $u=(u(1),u(2))$ for $u\in\Z^2$ and call open the sites in $V:=\{u\in\Z^2; U_u <p \}$ and closed those in $\Z^2\setminus V$. We will denote the index of the r-th open level of $u$ by $h(u,r)$, i.e.
\begin{align*}
  h(u,r):= \inf\Big\{k\geq 1: \sum_{j=1}^{k}\mathbbm{1}_{\{ L(u,j)\cap V\neq\emptyset\}}=r \Big\},
\end{align*}
where $L(u,j)$ is as defined in (\ref{L}).

\medskip

For $u\in\Z^2$ denote by $X[u]$ the unique (almost surely) point in $L(u,h(u,W_u))\cap V$ such that for every  $v \in L(u,h(u,W_u))\cap V$ either $X[u]$ is above $v$ or $U_{X[u]}>U_v$ and $X[u](2)=v(2)$.
Let us define the sequence $\{X_{m}^u: m\geq 0\}$  as $X_0^u = u$ and for $m\geq 1$, 
\begin{align*}
 X_{m}^u:=X[X_{m-1}^u]   \in \Z^2  \, .
\end{align*}  

 Define $\pi^u:[u(2),\infty]\rightarrow[-\infty,\infty]$ as $\pi^u(X_m^u(2)):=X_m^u(1)$ , linearly interpolated in the time interval $[X_m^u(2),X_{m+1}^u(2)]$ , and $\pi^u(\infty)=\infty$. We denote
\begin{align}
    \mathcal{X}:=\big\{ \pi^u : u\in \Z^2\big\}. 
\end{align}\label{$X$}
The system $\mathcal{X}$ is well defined for almost all realizations of the environment. This system is the variation of the Random Directed Forest which is the main object of study in this paper. From now on we call it the Generalized Random Directed Forest (GRDF).   

\medskip

We are interested in the diffusive rescaled GRDF. So let $\gamma > 0$ and $\sigma > 0$ be some fixed normalizing constants to be determined later, $u\in\Z^2$ and $n\in\N$. Let us define $\pi_{n}^u(t):=\frac{\pi^u(n^{2}\gamma t)}{n\sigma}$ for  $t\in[\frac{u(2)}{n^{2}\gamma},\infty), \pi_{n}^u(\infty)=\infty$ and 
\begin{align}\label{$X_n$}
\mathcal{X}_n:= \{ \pi_n^u : u\in \Z^2 \}. 
\end{align} 

\medskip

The system of coalescing paths $\mathcal{X}_n$ is the rescaled GRDF and our aim is to prove that its closure converges to the Brownian web as $n \rightarrow \infty$.

\medskip

Let us introduce the Brownian web. As in \cite{fontes2004brownian} take $(\bar{\R}^2,\rho)$ as the completion of $\R^2$ under the metric $\rho$ defined as
\begin{align*}
\rho\big((x_1,t_1),(x_2,t_2)\big):=\Big|\frac{\tanh (x_1)}{1+|t_1|}-\frac{\tanh(x_2)}{1+|t_2|}\Big|\vee\big|\tanh(t_1)-\tanh(t_2)\big|.
\end{align*}
We may consider $\bar{\R}^2$ as the image of $[-\infty,\infty]\times[-\infty,\infty]$ under the mapping
\begin{align*}
(x,t)\rightarrow \big(\Phi(x,t),\Psi(t)\big):=\Big(\frac{\tanh(x)}{1+|t|},\tanh(t)\Big).
\end{align*}
For $t_0\in[-\infty,\infty],$ let $C[t_0]$ be the set of functions from $[t_0,\infty]$ to $[-\infty,\infty]$ such that $\Phi\big(f(t),t\big)$ is continuous. Then define 
\begin{align*}
\Pi=\underset{t_0\in[-\infty,\infty]}{\bigcup}  C[t_0]\times\{t_0\}.
\end{align*} 
For $(f,t_0)$ in $\Pi$, let us denote by $\widehat{f}$ the function that extends $f$ to all $[-\infty,\infty]$ by setting it equal to $f(t_0)$ for $t\leq t_0$. Take
\begin{align*}
d\big((f_1,t_1),(f_2,t_2)\big)=\Big(\sup_{t\geq t_1\wedge t_2}|\Phi(\widehat{f_1}(t),t)-\Phi(\widehat{f_2}(t),t)|\Big)\vee|\Psi(t_1)-\Psi(t_2)|.
\end{align*}  
Let $\mathcal{H}$ be the set of compact subsets of $(\Pi,d)$ endowed with the Hausdorff metric $d_{\mathcal{H}}$:
\begin{align*}
d_{\mathcal{H}}(K_1,K_2):=\sup_{g_1\in K_1}\inf_{g_2\in K_2}d(g_1,g_2)\vee\sup_{g_2\in K_2}\inf_{g_1\in K_1}d(g_1,g_2), 
\end{align*}
for $K_1,K_2$ non-empty sets in $\mathcal{H}$. Let $\mathcal{F}_{\mathcal{H}}$ be the Borel $\sigma$-field induced by $(\mathcal{H},d_{\mathcal{H}})$.

\smallskip

The existence of the Brownian web as a random element of  $(\mathcal{H},\mathcal{F}_{\mathcal{H}})$ is the content of Theorem $2.1$ in \cite{fontes2004brownian} which we reproduce below:

\smallskip

\begin{theorem}\label{BW} 
There exists a $(\mathcal{H},\mathcal{F}_{\mathcal{H}})-$valued random variable $\mathcal{W}$, called the Brownian Web, whose distribution is uniquely determined by the following three properties:
\begin{enumerate}
\item [(i)] For any deterministic point $(x,t)$ in $\R^2$ there exists almost surely a unique path $\mathcal{W}_{x,t}$ starting from $(x,t).$
\item [(ii)] For any deterministic $n$, and $(x_1,t_1),\dots,(x_n,t_n)$ the joint distribution of $\mathcal{W}_{x_1,t_1},\dots,\mathcal{W}_{x_n,t_n}$ is that of coalescing Brownian motions.
\item[(iii)] For any deterministic, dense, countable subset $\mathcal{D}$ of $\R^2$, almost surely, $\mathcal{W}$ is the closure in $(\mathcal{H},\mathcal{F}_{\mathcal{H}})$
of $\{\mathcal{W}_{x,t}: (x,t)\in\mathcal{D}\}.$
\end{enumerate} 
\end{theorem}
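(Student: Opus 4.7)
The plan is to follow the construction carried out by Fontes, Isopi, Newman and Ravishankar. Fix a countable dense subset $\mathcal{D} = \{z_1, z_2, \ldots\}$ of $\R^2$, say $\mathcal{D} = \Q^2$. For any finite subfamily $\{z_{i_1}, \ldots, z_{i_n}\}$ with $z_{i_k} = (x_{i_k}, t_{i_k})$, I would first build a family of coalescing Brownian motions: run an independent Brownian motion from each $z_{i_k}$ starting at time $t_{i_k}$, and whenever two paths meet, declare that the one with larger index follows the one with smaller index from the meeting time onward. These finite-dimensional distributions are consistent, so Kolmogorov's extension theorem produces a random family $\{W_z : z \in \mathcal{D}\}$ on some probability space, each $W_z$ being a continuous path in $(\Pi,d)$.

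The next step is to show that, almost surely, $\{W_z : z \in \mathcal{D}\}$ is precompact in $(\Pi,d)$; I would then set $\mathcal{W}$ equal to its closure, which is automatically an element of $\mathcal{H}$. Precompactness reduces to a uniform modulus-of-continuity estimate for the countable family of paths, combined with the crucial observation that the metric $d$ is built from the compactifying maps $\Phi$ and $\Psi$, so paths whose starting points are far from the origin in $\bar\R^2$ are already close to the boundary paths under $d$. One then only needs to control paths starting in a fixed large box, and there coalescence works in our favour: past any meeting time the paths involved are genuinely identical, so the effective number of independent trajectories to control is finite in every compact time window, and standard Brownian modulus-of-continuity bounds apply.

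Having constructed $\mathcal{W}$, property (iii) is immediate from the definition. Property (i) follows by taking, for a deterministic $(x,t) \in \R^2$, any sequence $z_k \in \mathcal{D}$ with $z_k \to (x,t)$; by precompactness $\{W_{z_k}\}$ has convergent subsequences, and since two coalescing Brownian motions started from nearby points meet in a time that tends to zero in probability as the starting points converge, all subsequential limits must agree, providing a unique path through $(x,t)$. Property (ii) then follows by the same approximation argument: approximate each deterministic $(x_j,t_j)$ by points in $\mathcal{D}$ and use the continuity of the coalescing flow together with the coalescing rule to identify the limit law as that of $n$ coalescing Brownian motions. Finally, uniqueness in distribution holds because properties (i) and (ii) determine the finite-dimensional law of $\{\mathcal{W}_{x,t} : (x,t) \in \mathcal{D}\}$, and property (iii) recovers the whole random compact set from this family via the deterministic closure operation, so any two candidates $\mathcal{W}$ and $\mathcal{W}'$ have the same distribution.

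The main obstacle is the precompactness step in paragraph two: one is dealing with a countable family of paths in a metric that is sensitive to behaviour at infinity in both space and time, and the tightness-type estimate must be uniform across all starting points simultaneously. Once this is established, the remaining verifications of (i)--(iii) and of uniqueness reduce to soft arguments about closures and almost sure coincidence of subsequential limits. A point worth flagging is that for property (i) the exceptional null set can be taken independently of $(x,t)$ only for deterministic starting points, which is precisely why the statement restricts to deterministic $(x,t)$.
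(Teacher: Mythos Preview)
The paper does not prove this theorem at all: it is stated as a quotation of Theorem~2.1 in Fontes--Isopi--Newman--Ravishankar \cite{fontes2004brownian}, with no argument given. So there is no ``paper's own proof'' to compare against.

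Your sketch follows the standard FINR construction and is broadly correct in spirit. A couple of points worth tightening if you were to write this up in full. First, your verification of property~(iii) is only immediate for the \emph{particular} countable dense set $\mathcal{D}$ you used in the construction; the statement asserts it for \emph{any} deterministic dense countable $\mathcal{D}'$, so you still owe an argument that the closure of $\{\mathcal{W}_{z}:z\in\mathcal{D}'\}$ coincides with $\mathcal{W}$ almost surely. This follows from property~(i) plus the same approximation reasoning you use there, but it is not automatic. Second, in your argument for~(i), the claim that all subsequential limits of $W_{z_k}$ coincide requires knowing that the limit path stays with any fixed $W_{z_j}$ after they meet, i.e.\ that the closure operation preserves the coalescing property; this is true but is itself one of the non-trivial facts FINR establish. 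These are refinements rather than genuine gaps, and you have correctly identified precompactness as the place where the real analytic work lies.
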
 

\medskip

 The next result is a criterion of convergence to the Brownian web which is a variation of Theorem $2.2$ proved in \cite{fontes2004brownian} that can be found as Theorem 1.4 in \cite{newman2005convergence}. These theorems (Theorem 2.2 in \cite{fontes2004brownian} and Theorem 1.4 in \cite{newman2005convergence}) have been the main tools to prove convergence to the Brownian web for several coalescing systems of random walks.

\smallskip

\begin{theorem}\label{convergence theo}
Let $\{\mathcal{Y}_n\}_{n\geq 1}$ be a sequence of $(\mathcal{H},\mathcal{F}_{\mathcal{H}})$-valued random variables We have that $\{\mathcal{Y}_n\}_{n\geq 1}$ converges to the Brownian web if the following conditions are satisfied:
\begin{enumerate}
\item [$(I)$] There exists some deterministic countable dense subset $D \subset \R^2$ and $\theta_n ^y\in\mathcal{Y}_n$ for any $y\in D$ satisfying: for any deterministic $y_1,\dots,y_m\in D, \theta_n^{y_1},\dots,\theta_n^{y_m}$ converge in distribution as $n\rightarrow\infty$ to coalescing Brownian motions starting in $y_1,\dots,y_m.$
\item[$(B)$] For every $\beta>0$, 
$$
\limsup_n\sup_{t>\beta}\sup_{t_0,a\in\R}\pr\big[|\eta_{\mathcal{Y}_n}(t_0,t,a-\epsilon,a+\epsilon)|> 1\big]\rightarrow 0, \text{ as }\epsilon\rightarrow 0^+ \, ,
$$ 
where $\eta_{\mathcal{Y}_n}(t_0,t,a,b)$ is the set of points in $\R\times\{t_0+t\}$ that are touched by paths which also touch some point in $[a,b]\times\{t_0\}$.
\item[$(E)$] For some $(\mathcal{H},\mathcal{F}_{\mathcal{H}})$-valued random variables $Y$ and $t>0$ take $Y^{t^-}$ as the subset of paths in $Y$ which start before or at time t. If $Z_{t_0}$ is the subsequential limit of $\{\mathcal{Y}_n^{t_0^-}\}_{n\geq 1}$ for any $t_0$ in $\R$, then for all $t,a,b$ in $\R$ with $t>0$ and $a<b$ we get
\begin{align*}
    \esp\big[|\hat \eta_{Z_{t_0}}(t_0,t,a,b)|\big]\leq \esp\big[|\hat \eta_{\mathcal{W}}(t_0,t,a,b)|\big] = \frac{b-a}{\sqrt{\pi t}} \, .
\end{align*}
where $\hat \eta_{\mathcal{Y}_n}(t_0,t,a,b)$ is the set of points in $(a,b) \times\{t_0+t\}$ that are touched by paths which also touch $\mathbb{R} \times\{t_0\}$,
\item[$(T)$]  Let $\Lambda_{L,T}:=[-L,L]\times[-T,T]\subset\R^2$ and for $(x_0,t_0)\in\R^2$ and $\rho,t>0,R(x_0,t_0;\rho,t):=[x_0-\rho,x_0+\rho]\times[t_0,t_0+t]\subset\R^2$. For $\mathbb{K} \in\mathcal{H}$ define $A_{\mathbb{K}}(x_0,t_0;\rho,t)$ to be the event that $\mathbb{K}$ contains a path touching  both $R(x_0,t_0;\rho,t)$ and the right or the left boundary  of $R(x_0,t_0;20\rho,4t)$. Then for every $\rho,L,T\in(0,\infty)$ 
\begin{align*}
    \frac{1}{t}\limsup_{n \rightarrow \infty}\sup_{(x_0,t_0)\in\Lambda_{L,T}}\pr\Big[A_{\mathcal{Y}_n}(x_0,t_0;\rho,t)\Big]\rightarrow0 \text{ as } t\rightarrow 0^+.
\end{align*}
\end{enumerate}
\end{theorem}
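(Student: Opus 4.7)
The plan is to follow the three-step architecture established in \cite{fontes2004brownian} and refined in \cite{newman2005convergence}: first verify tightness of $\{\mathcal{Y}_n\}$ in $(\mathcal{H},\mathcal{F}_\mathcal{H})$, next identify the finite-dimensional distributions of any subsequential limit via the dense skeleton furnished by $(I)$, and finally exclude the possibility of ``extra'' limit paths not captured by that skeleton.

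Tightness is the easy part: since $\rho$ compactifies $\R^2$, the space $(\mathcal{H},d_\mathcal{H})$ is itself compact, so tightness of $\{\mathcal{Y}_n\}$ is automatic and we may extract a subsequential limit $\mathcal{Z}$. Invoking Skorohod embedding together with condition $(I)$, for any finite collection $y_1,\dots,y_m\in D$ one produces paths $\theta^{y_1},\dots,\theta^{y_m}$ inside $\mathcal{Z}$ jointly distributed as coalescing Brownian motions started from $y_1,\dots,y_m$. Performing this extraction for every $y\in D$ yields a random compact subset $\mathcal{Z}^{D}\subseteq \mathcal{Z}$ whose law coincides with that of the Brownian web $\mathcal{W}$, by the characterisation (iii) in Theorem \ref{BW}. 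It remains to show $\mathcal{Z}=\mathcal{Z}^{D}$ almost surely, which will give $\mathcal{Z}\stackrel{d}{=}\mathcal{W}$.

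For this last and most delicate step I would combine $(T)$, $(B)$ and $(E)$. Condition $(T)$ controls long excursions of $\mathcal{Y}_n$-paths and implies that no limit path can have ``wild'' oscillations, so the counting functional $\eta$ passes well to subsequential limits. Condition $(E)$ bounds any subsequential limit $Z_{t_0}$ of the truncated family $\mathcal{Y}_n^{t_0^{-}}$ by $\esp[|\eta_{Z_{t_0}}(t_0,t,a,b)|]\leq 1+(b-a)/\sqrt{\pi t}$, which is precisely the Brownian web value. Since $\mathcal{Z}^{D}\subseteq Z_{t_0}$ already achieves this expectation in law, any additional path of $Z_{t_0}$ sitting strictly outside $\mathcal{Z}^{D}$ would contribute an extra point to $\eta$ and strictly increase the expectation, producing a contradiction. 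Condition $(B)$ is needed to exclude the pathological case in which an extra path merges with a skeletal one exactly at the observation height $t_0+t$ and would therefore escape detection by the counting functional: it forces points separated at an initial interval to remain distinct along refinements, controlling multiplicities.

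The main obstacle is the ``no extra paths'' step. One must diagonally extract subsequences across countably many truncation times $t_0$, verify almost-sure continuity (for a dense set of times $t_0$) of the truncation map $\mathcal{Y}_n\mapsto \mathcal{Y}_n^{t_0^{-}}$ in the Hausdorff topology, and then combine $(T)$, $(B)$, $(E)$ with the sharp Brownian web identity $\esp[|\eta_{\mathcal{W}}(t_0,t,a,b)|]=1+(b-a)/\sqrt{\pi t}$ to rule out any additional limit path. Once accomplished, every subsequential limit of $\{\mathcal{Y}_n\}$ has the same law as $\mathcal{W}$, and hence $\mathcal{Y}_n$ converges in distribution to the Brownian web in $(\mathcal{H},\mathcal{F}_\mathcal{H})$.
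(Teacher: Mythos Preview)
The paper does not prove Theorem \ref{convergence theo}; it is quoted as a known convergence criterion, attributed to \cite{fontes2004brownian} and, in the form stated here, to Theorem~1.4 of \cite{newman2005convergence}. So there is no in-paper proof to compare against, and your sketch is effectively a reconstruction of the argument in those references.

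That said, there is a genuine gap in your sketch. You claim that ``since $\rho$ compactifies $\R^2$, the space $(\mathcal{H},d_\mathcal{H})$ is itself compact, so tightness of $\{\mathcal{Y}_n\}$ is automatic.'' This is incorrect: while $(\bar{\R}^2,\rho)$ is compact, the path space $(\Pi,d)$ is only complete and separable, not compact (a sequence of increasingly oscillatory paths has no convergent subsequence in $d$), and hence $(\mathcal{H},d_{\mathcal{H}})$ is not compact either. Condition $(T)$ is not merely a regularity statement used downstream; it is precisely the tightness condition in the scheme of \cite{newman2005convergence}. It supplies the Arzel\`a--Ascoli-type equicontinuity control needed to show that $\{\mathcal{Y}_n\}$ is tight in $(\mathcal{H},\mathcal{F}_{\mathcal{H}})$. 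Without it, you cannot extract the subsequential limit $\mathcal{Z}$ on which the rest of your argument relies.

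Apart from this misattribution of the role of $(T)$, your architecture---extract a subsequential limit, embed a Brownian web inside it via $(I)$ and Theorem~\ref{BW}(iii), then use $(B)$ and $(E)$ together with the exact value $\esp[|\eta_{\mathcal{W}}(t_0,t,a,b)|]=1+(b-a)/\sqrt{\pi t}$ to rule out extra paths---is the correct outline of the argument in \cite{newman2005convergence}.
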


\bigskip

\begin{remark}
We should point out that the statement of condition (T) in \cite{newman2005convergence} uses the rectangle $R(x_0,t_0;17\rho,2t)$ instead of $R(x_0,t_0;20\rho,4t)$. This change of the constants helps with the proof of condition (T) in our case and it is simple to verify from \cite{newman2005convergence} that we are able to make this change without prejudice to the result.
\end{remark}

\bigskip

The main result in this paper is the convergence of the GRDF to the Brownian web under diffusive scaling. Before stating it, we need to check that $\overline{\mathcal{X}}$, the closure in $(\mathcal{H},d_{\mathcal{H}})$ of $\mathcal{X}$, is a well-defined random element of $(\mathcal{H},d_{\mathcal{H}})$ which is the content of the next result:
 
\begin{proposition}\label{well posedness}
We have that $\overline{\mathcal{X}}$ and $\overline{\mathcal{X}}_n$, $n\ge 1$, are almost surely compact subsets of $(\Pi,d)$. 
\end{proposition}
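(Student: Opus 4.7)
The plan is to prove sequential compactness of $\mathcal{X}$ in $(\Pi, d)$; since $(\Pi, d)$ is a complete separable metric space, this yields almost sure compactness of $\overline{\mathcal{X}}$, and the same argument applied to $\mathcal{X}_n$ (with rescaled tail constants) gives the corresponding statement for $\overline{\mathcal{X}}_n$. It suffices to show that every sequence $\{\pi^{u_k}\}_{k \geq 1} \subset \mathcal{X}$ admits a subsequence converging in $(\Pi, d)$ to some element of $\Pi$.

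The technical input is a uniform tail bound on the first-jump length $h(u, K)$. The level set $L(u, k)$ consists of $2k - 1$ sites, each independently open with probability $p > 0$; moreover distinct level sets are disjoint, so the indicators $\mathbbm{1}\{L(u, k) \text{ open}\}$ form an independent sequence in $k$ with success probabilities $1 - (1-p)^{2k-1}$ tending rapidly to $1$. Combined with $W_u \leq K$ almost surely, a Hoeffding-type concentration argument yields constants $C_1, C_2 > 0$ (independent of $u$) such that
\begin{align*}
\pr[h(u, K) > m] \leq C_1 \exp(-C_2 m).
\end{align*}
Summing over $u \in \Z^2$ with $||u||_1 \leq R$ and invoking Borel--Cantelli, it follows that almost surely the first-jump length $h(u, K)$ is uniformly bounded over $u$ in any bounded region of $\Z^2$, which in turn controls the oscillation of any path $\pi^u$ over any bounded time window, entirely in terms of the environment.

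Given $\{\pi^{u_k}\}$, pass to a subsequence along which $u_k$ converges in the compact space $\bar{\R}^2$ to some $u_\infty$. If $u_\infty \in \Z^2$, discreteness forces $u_k = u_\infty$ eventually and the sequence is trivially convergent. Otherwise three cases remain: (a) $u_k(2) \to +\infty$, where eventually $\widehat{\pi^{u_k}}(t) = u_k(1)$ on any compact time interval and the factor $1/(1+|t|)$ in $\Phi$, together with the convergence of $\tanh(u_k(1))$ along a further subsequence, gives uniform convergence to an element of $C[+\infty]$; (b) $u_k(2)$ bounded with $|u_k(1)| \to \infty$, where the uniform jump bound together with the fact that at most $T_2 - T_1$ jumps occur in the time interval $[T_1, T_2]$ implies $\pi^{u_k}(t) \to \pm \infty$ uniformly on $[T_1, T_2]$, so the limit path is the constant $\pm\infty$; (c) $u_k(2) \to -\infty$, where we apply Arzel\`a--Ascoli to $\Phi(\widehat{\pi^{u_k}}(\cdot), \cdot)$ on each compact time interval and use diagonalization across a countable exhaustion to obtain a limit in $(\Pi, d)$.

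The chief obstacle is case (c): uniform equicontinuity of $\Phi(\widehat{\pi^{u_k}}(\cdot), \cdot)$ on a compact time interval requires handling paths that may traverse arbitrary regions of $\Z^2$ before entering the interval of interest. The non-Markovian structure prevents a direct trajectory-level union bound, since the environment revealed by early steps can bias subsequent jump distributions; however, the tail estimate above is site-local, so a Borel--Cantelli argument enumerating sites rather than trajectory steps provides the needed control. Separating the regime where the $\pi^{u_k}$ remain spatially bounded (equicontinuity of $\pi^{u_k}$ itself via the jump bounds) from the regime where $|\pi^{u_k}| \to \infty$ (stabilization of $\tanh \circ \pi^{u_k}$ to $\pm 1$) then yields equicontinuity of the compactified paths. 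The analogue for $\overline{\mathcal{X}}_n$ follows by absorbing the diffusive rescaling into the constants $C_1, C_2$.
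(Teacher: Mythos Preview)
Your approach is genuinely different from the paper's. The paper deduces compactness from Lemma~\ref{paths in [a,b]}: for any time $t$ and any finite interval $[a,b]$, the set $\mathcal{X}^{t^-}(t)\cap[a,b]$ is almost surely finite (indeed has all moments). The proof of that lemma uses the ``good box'' structure (Remark~\ref{cross C_K}): a $K\times K$ block of open sites with all $W$-values equal to $1$ cannot be crossed laterally, so any path touching $[a,b]\times\{t\}$ is trapped in a finite random region bounded by the nearest good boxes on each side. This local finiteness is precisely the hypothesis of Lemma~1.1 in \cite{newman2005convergence}, and compactness follows. You instead try to control jump lengths site by site and argue equicontinuity directly.

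There is a real gap, located exactly where you flag case~(c) as the ``chief obstacle.'' Your Borel--Cantelli step, as written, gives only a bound on $h(u,K)$ for $u$ ranging over a \emph{bounded} region of $\Z^2$; but that is automatic (finitely many sites, each $h$ a.s.\ finite) and needs no Borel--Cantelli at all. What you actually require, both for the ``uniform jump bound'' invoked in case~(b) and for the equicontinuity in case~(c), is control of $h(v,K)$ for $v$ ranging over an \emph{infinite} time strip: a path $\pi^{u_k}$ with $u_k(2)\to-\infty$ may enter the window $[T_1,T_2]$ at an arbitrary spatial position, or via a single interpolated jump originating from a lattice point $(x,s)$ with $s$ well below $T_1$, and the oscillation on $[T_1,T_2]$ is governed by $h$ at sites you have not bounded. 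A repair is available: sum the exponential tail over the full lattice to get, almost surely, $h((x,s),K)\le c(|x|+|s|+1)$ for all but finitely many sites; this forces the last lattice point visited before time $T_1$ to lie in a strip whose depth is controlled by the spatial position, and simultaneously keeps $|x'-x|=o(|x|)$ so that $\tanh$ stabilizes when $|x|$ is large. Without that stronger statement, the regime-separation in~(c) does not close and the ``uniform jump bound'' in~(b) is unjustified. The paper's route via Lemma~\ref{paths in [a,b]} sidesteps all of this by reducing to the finitely many paths that actually cross a given box, rather than tracking jump lengths over the whole lattice.
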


\smallskip

The proof of \ref{well posedness} is a direct consequence of Lemma \ref{paths in [a,b]} following the proof of Lemma 1.1 in \cite{newman2005convergence}.

\medskip

\begin{theorem}\label{goal} 
There exist positive constants $\gamma$ and $\sigma$ such that $\overline{\mathcal{X}}_n$, the closure of $\mathcal{X}_n$ in $(\Pi,d)$, converges in distribution to the Brownian web as $n$ goes to infinity.
\end{theorem}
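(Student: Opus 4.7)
The plan is to verify the four conditions $(I)$, $(B)$, $(E)$ and $(T)$ of Theorem \ref{convergence theo} for the sequence $\{\overline{\mathcal{X}}_n\}_{n\geq 1}$. The main difficulty is that the individual paths $\pi^u$ are not Markovian: after a step, information about sites queried but not chosen (for instance, closed sites in lower levels, or open sites that were not the uppermost on the chosen level) persists, so the future of the walk depends on more than its current position. Unlike in \cite{roy2013random}, paths in the GRDF can cross before coalescing, which is precisely why we appeal to the criterion of \cite{newman2005convergence} through condition $(B)$ instead of arguing non-crossing directly.

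The first technical ingredient I would build is a regeneration structure along a single path, different from but inspired by \cite{roy2013random}. Using $\pr[W_u=1]>0$ and $\pr[W_u\leq K]=1$, I would call a step $n$ a regeneration step when $W_{X_n^u}=1$, the first open level above $X_n^u$ is at $L_1$-distance $1$, and no previously explored site lies strictly above the current horizontal line; this event has positive probability independently of the past up to the previous regeneration. Between consecutive regenerations the time gap and horizontal displacement form an i.i.d.\ sequence with exponential tails (the $L_1$-jump is bounded by $K$ times a geometric waiting variable for a non-empty level). Donsker's theorem then gives the diffusive limit of one rescaled path, identifying $\gamma$ and $\sigma$ from the first and second moments of one regeneration increment. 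Running the construction jointly for finitely many paths $\pi^{u_1},\dots,\pi^{u_m}$ and observing that as long as the paths are at spatial distance of order $n$ the regions they query are disjoint with high probability, I would obtain joint convergence to independent Brownian motions while the paths are far apart. When two paths come within $o(n)$, the difference of their horizontal positions at common regeneration epochs behaves like a recurrent symmetric random walk, so they coalesce on a timescale of $o(n^2)$; possible crossings contribute only an $O(1)$ number of extra regenerations before coalescence. This yields condition $(I)$. Condition $(T)$ then reduces to a union bound over the $O(n)$ paths that can enter the rectangle, combined with the exponential tail of the oscillation of a single path between regenerations.

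The heart of the argument lies in conditions $(B)$ and $(E)$. For $(B)$ I need a uniform bound of order $o(1)$, as $\epsilon\to 0$ and $n\to\infty$ (with $t>\beta$), on the probability that two paths born at horizontally neighbouring sites at time $t_0$ are still distinct at time $t_0+t$. The strategy is to couple the difference of two such paths, read off at joint regeneration epochs, with a symmetric recurrent random walk killed at $0$, using the independence provided by the regeneration structure; the hitting-time estimate for this walk then gives the required bound. Condition $(E)$ follows from the corresponding first-moment statement: the expected number of distinct descendants at time $t_0+t$ of points of $[a,b]\times\{t_0\}$ is asymptotically dominated by its Brownian-web analogue $1+(b-a)/\sqrt{\pi t}$, which will be a consequence of the same two-path coalescence estimate summed over starting pairs. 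The main obstacle I expect is this coalescence estimate in the crossing regime: because the two paths can cross, one cannot monotonically order them, and the non-Markovian environment means that at each joint regeneration the coupling must be refreshed while controlling the (small) region of shared information between the two paths. Engineering the joint regeneration so that the shared information does not spoil the independence required for the difference walk is precisely the step where the argument for the GRDF departs most from \cite{roy2013random}, and it is the place where I would expect the proof to be the longest and the most delicate.
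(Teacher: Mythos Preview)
Your high-level plan --- verify conditions $(I)$, $(B)$, $(E)$, $(T)$ of Theorem~\ref{convergence theo}, with regeneration times to recover a Markov structure and a coalescence-time estimate as the engine for $(B)$ and $(E)$ --- matches the paper's architecture. But several of the mechanisms you sketch would not work as stated.

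\textbf{Regeneration.} Your proposed regeneration event requires that ``no previously explored site lies strictly above the current horizontal line'', i.e.\ $\Delta_n(u)=\emptyset$. You then assert this has positive probability independently of the past. This is the crux, and it is not automatic: the history region $\Delta_n(u)$ can be arbitrarily tall, and the conditional probability that it empties in one step depends on its current shape, so there is no obvious uniform lower bound. The paper does not wait for $\Delta_n=\emptyset$ to occur by chance. Instead (Lemma~\ref{tpr}) it constructs, above a reference height, the event that the $K$ sites in the column directly above the path's current first coordinate are all open with $W=1$; the \emph{upmost-open-site} rule then forces the path to climb that column vertically and exit the history region regardless of how large it was. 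This is precisely why the upmost rule is imposed, and it is the idea your regeneration sketch is missing (see also Remark~\ref{explicacao}). For joint paths the same device is iterated (Proposition~\ref{renewal time proposition}).

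\textbf{Coalescence estimate.} Describing the difference of two paths at joint regenerations as ``a symmetric recurrent random walk killed at $0$'' glosses over the fact that the increment law depends on the current separation (the paths interact when close), and that crossings produce overshoots whose distribution must be controlled uniformly in the pre-crossing distance. The paper handles this via a Skorohod embedding (Section~\ref{sec:skorohod}): $Y^m_n \stackrel{d}{=} B(S_n)+m$ with $S_n$ built from exit times of random intervals, Lemma~\ref{crossing lower bound} giving a uniform stochastic lower bound on the overshoot, and Lemma~\ref{a_l} showing the number of sign changes before coalescence has geometric tails. These ingredients combine in Lemma~\ref{tau Y} to give $\pr[\vartheta>k]\le C/\sqrt{k}$. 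A direct random-walk comparison, without the Skorohod machinery and the overshoot control, does not obviously yield this bound in the crossing regime.

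\textbf{Condition $(T)$.} The small rectangle $R(0,0;\rho,t)$ becomes $[-n\sigma\rho,n\sigma\rho]\times[0,n^2\gamma t]$ in unscaled coordinates, containing $O(n^3)$ lattice points, not $O(n)$. A union bound therefore needs a per-path probability of order $n^{-4}$, which the paper obtains by making the escaping path cross four well-separated ``barrier'' paths before reaching the boundary (Section~\ref{sec:T}, using Lemma~\ref{tau(x,y,u^+)} four times). Exponential tails on a single regeneration block are not enough on their own.
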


\medskip

\begin{remark}
By choosing the scaling parameter as $n' = \lfloor n \sqrt{\gamma} \rfloor$, the reader can check that we can always choose $\gamma = 1$ in Theorem \ref{goal}. Our choice for the statement stems from the fact that the proof of Theorem \ref{goal} is based on a renewal structure introduced in Section \ref{sec:renewal} and $\gamma$ is a time correction. Indeed $\gamma$ will be chosen simply as the expectation of the increments of the renewal times and $\sigma$ as the standard deviation of the displacements of the process on these increments of time. The computation of these parameters (as functions of $p$ and the common distribution of $W_u$, $u\in\Z^2$) is not straightforward and depends on the renewal structure. Even if we consider the scaling correction $n' = \lfloor n \sqrt{\gamma} \rfloor$ and get a single parameter $\sigma/\sqrt{\gamma}$ which does not depend on renewal structure, the computation is not simple. Since the explicit values are unnecessary in our proofs, we do not discuss their computations any further.
\end{remark}

\medskip

The rest of the paper is devoted to the proof of Theorem \ref{goal} and we end this section explaining how it is divided. In Section \ref{sec:renewal} we introduce regeneration times where the random paths in the GRDF have no information about the future. This yields a Markovian structure we can rely on. In Section \ref{sec:skorohod} we describe what we call the Skorohod scheme for the GRDF which was introduced in \cite{coletti2009scaling} to estimate the tail probability of coalescing times. In Section \ref{coalescing time} we prove a central estimate related to the tail probability of the coalescing time of two random paths of the GRDF. The results from both Sections \ref{sec:renewal} and \ref{coalescing time} will be essential for the rest of the paper. In Sections \ref{sec:I}, \ref{sec:B}, \ref{sec:E} and \ref{sec:T} we prove respectively conditions $I$, $B$, $E$ and $T$.

\bigskip

\section{Renewal Times.}\label{sec:renewal}

In this section we prove the existence of stopping times where the random paths in the GRDF have no information about the future. The idea of using stopping times came from \cite{roy2013random} and is fundamental since we use it to build a renewal structure for the system. However in order to be able to define these renewal times analogously to them we had to consider that a GRDF path always jumps to the upmost open site in a chosen open level above its current position (For them, the choice is uniform on the first open level as we had already pointed out). The reason is that, if we choose the open level as in the GRDF, then paths can enter regions in the "future" which have already been observed (called explored region in \cite{roy2013random}); something that does not occur in the Random Directed Forest and is necessary in the proof of \cite{roy2013random}, see Remark \ref{explicacao}. Therefore we need a different approach to obtain a similar renewal structure to that of \cite{roy2013random}. To deal with paths that enter into the explored region, our approach is to search for open sites above the current position of the path that are also above the explored region. To guarantee that the path will jump to these open sites we need to impose jumps to the upmost open site on the chosen level. This is a way to force the path to leave the explored region which is not possible with an uniform choice among the open sites on the chosen level.
\medskip 

Recall the definition of the random paths in the GRDF, $(X_m^u)_{m\ge 1}$ , $u \in \mathbb{Z}^2$, from the previous section. As in \cite{roy2013random} let us denote by $\Delta_m(u)$, for $m\in\Z_+$ and $u\in\Z^2$, the set of sites above $X_m^u$ whose configuration is already known; i.e. $\Delta_0(u)=\emptyset$ and for $m\geq 1$,
\begin{align}\label{Delta}
\Delta_m(u):=&\Big[\Delta_{m-1}(u)\cup\big\{v\in\Z^2:||v-X_{m-1}^u||_1\leq||X_m^u-X_{m-1}^u||_1\big\}\Big]\nonumber\\
&\cap\big\{v\in\Z^2: v(2)>X_m^u(2)\big\}.
\end{align}
In \cite{roy2013random}, the authors call the set $\Delta_m(u)$ the "explored region" of the path starting at site $u$ after its m-th jump. These regions describe which part of the environment above the current position of the path is already known.

\begin{lemma}\label{tpr} 
	Let $u$ be a point in $\Z^2$. Then
	\begin{enumerate}
		\item [(i)] There exists a random variable $\tilde{\tau}(u)\ge 1$ such that $\Delta_{\tilde{\tau}(u)}(u)=\emptyset$.
		\item [(ii)] Taking $T(u):=X_{\tilde{\tau}(u)}(u)(2)$, where $\tilde{\tau}(u)$ is taken as in $(i)$, we have that $\esp\big[\big(T(u)-u(2)\big)^k\big]<\infty$, for all $k\geq 1$.
		\item [(iii)] There exists a random variable $Z(u)$ such that $\esp[Z(u)^k]<\infty$ for all $k\geq 1$ and 
		$$
		\sup_{u(2)\leq t\leq T(u)}|\pi^u(t)-\pi^u(0)|\leq Z(u)\, ,
		$$
		where $T(u)$ is taken as in $(ii)$.
		\item[(iv)] The distributions of the random variables $T(u)-u(2)$ and $Z(u)$, previously defined, do not depend on $u(1)$. Also we have that $T(u)-u(2)\leq Z(u)$.
		\item [(v)] For $k\geq u(2)$ let us define $\mathcal{F}^u_k:=\sigma\big(\{X_v,W_v:v\in\Z^2, u(2)\leq v(2)\leq k \}\big)$. Then $T(u)$ is a stopping time for the filtration $\{\mathcal{F}^u_k: k\geq u(2)\}$.
	\end{enumerate}
\end{lemma}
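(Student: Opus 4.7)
\medskip

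The core assertion is (i); the remaining parts follow from (i) together with moment bounds on the regeneration step. The strategy is to introduce a Lyapunov function that detects emptiness of $\Delta_n(u)$ and to show it has uniform negative drift with increments having uniformly bounded exponential moments.

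Define
\begin{equation*}
\Gamma_n := \max_{v \in \Delta_n(u)} \bigl( v(2) - X_n^u(2) \bigr),
\end{equation*}
with $\Gamma_n = 0$ when $\Delta_n(u) = \emptyset$. Since every $v \in \Delta_n(u)$ satisfies $v(2) \geq X_n^u(2) + 1$, we have $\Gamma_n = 0$ if and only if $\Delta_n(u) = \emptyset$, so $\tilde{\tau}(u) := \inf\{n \geq 1 : \Gamma_n = 0\}$ is the right candidate for the regeneration step. Writing $k_{n+1} := \|X_{n+1}^u - X_n^u\|_1 = h(X_n^u, W_{X_n^u})$ for the $L_1$ jump size and $j_{n+1} := X_{n+1}^u(2) - X_n^u(2) \geq 1$ for the vertical height, the recursion (\ref{Delta}), together with the observation that any $v$ in the $L_1$-ball of radius $k_{n+1}$ around $X_n^u$ with $v(2) > X_n^u(2)$ satisfies $v(2) - X_n^u(2) \leq k_{n+1}$, yields
\begin{equation*}
\Gamma_{n+1} \leq \max \bigl( (\Gamma_n - j_{n+1})_+,\; k_{n+1} - j_{n+1} \bigr).
\end{equation*}
In particular, a \emph{vertical} jump ($k_{n+1} = j_{n+1}$) gives $\Gamma_{n+1} \leq (\Gamma_n - 1)_+$.

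The key step is to show there exists $c > 0$ with $\pr [\, k_{n+1} = j_{n+1} \mid \mathcal{G}_n\,] \geq c$, where $\mathcal{G}_n$ is the $\sigma$-algebra generated by the construction of $(X_m^u)_{m \leq n}$ together with the observed environment on $\Delta_n(u)$. The upmost-open-site rule is essential here: on any level, the site directly above $X_n^u$, when open, is automatically selected. Conditional on $\mathcal{G}_n$, sites above $X_n^u$ outside $\Delta_n(u)$ are fresh i.i.d.\ Bernoulli$(p)$ and $W_{X_n^u}$ is independent; using that $\pr[W_u = 1] > 0$, a case analysis on which sites $(X_n^u(1), X_n^u(2) + j)$, $j \geq 1$, already belong to $\Delta_n(u)$ (and with what status) produces, in every configuration, a positive-probability event on the fresh environment and on $W_{X_n^u}$ forcing a vertical jump. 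Combined with the uniform bound $\esp[e^{c' k_{n+1}} \mid \mathcal{G}_n] \leq C$ obtained from $W \leq K$ and $\pr[L(\cdot,k) \text{ is open}] = 1 - (1-p)^{2k-1}$, the drift estimate yields by a standard Foster--Lyapunov / hitting-time argument $\esp[\tilde{\tau}(u)^m] < \infty$ for all $m \geq 1$, which is (i).

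Items (ii)--(v) then follow rather directly. For (ii), $T(u) - u(2) = \sum_{n=1}^{\tilde{\tau}(u)} j_n \leq \sum_{n=1}^{\tilde{\tau}(u)} k_n$, and a Wald-type estimate using the moments of $\tilde{\tau}(u)$ from (i) and uniform exponential tails on $k_n$ gives $\esp[T(u)^m] < \infty$ for all $m$. For (iii) one may take $Z(u) = \sum_{n=1}^{\tilde{\tau}(u)} k_n$, which dominates both $T(u) - u(2)$ and the horizontal range $\sup_t |\pi^u(t) - \pi^u(0)| \leq \sum_n |X_n^u(1) - X_{n-1}^u(1)|$, and the same argument applies. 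Translation invariance of the i.i.d.\ environment gives (iv) immediately, since the joint law of $(\Delta_n(u), X_n^u - u)_{n \geq 0}$ does not depend on $u$. For (v), $\Delta_{\tilde{\tau}(u)}(u) = \emptyset$ means the construction of $(X_m^u)_{m \leq \tilde{\tau}(u)}$ has queried only sites $v$ with $u(2) \leq v(2) \leq T(u)$, so $\{T(u) \leq n\} \in \mathcal{F}^u_n$. The main obstacle is the uniform lower bound on the vertical-jump probability: some sites directly above $X_n^u$ may already lie in $\Delta_n(u)$ with known closed status, blocking the simplest scenario, and the case analysis must cover all such blocking patterns using the upmost-open-site rule together with the positive probability of $W_{X_n^u}$ selecting different open levels.
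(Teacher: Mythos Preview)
Your approach is genuinely different from the paper's. The paper never analyzes the conditional law of the next step given an arbitrary history region. Instead it builds, above a moving frontier $u(2)+\xi_1+\cdots+\xi_j$ that always dominates $X^u_{\tilde n_j}(2)$, an explicit event $E_j(u)$ depending only on \emph{fresh} sites (the column of $K$ sites immediately above the frontier being open, together with $W=1$ on the first $K-1$ of them). These events are i.i.d., and on $E_j(u)$ the upmost-open-site rule forces the path to climb vertically to the top of the column, where $\Delta$ is empty. A geometric number $M$ of trials then gives $T(u)=u(2)+\sum_{i\le M}\xi_i$ and $Z(u)=(\sum_{i\le M}\xi_i)^3$. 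This sidesteps any conditional analysis of $\Delta_n(u)$ entirely.

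Your Lyapunov route is plausible in spirit, but as written it has two real gaps. First, the claimed bound $\esp[e^{c'k_{n+1}}\mid\mathcal G_n]\le C$ is justified via the unconditional formula $\pr[L(\cdot,k)\text{ open}]=1-(1-p)^{2k-1}$, which does not apply once you condition on $\mathcal G_n$: sites in $\Delta_n(u)$ have \emph{known} status, and if many low levels are forced closed by the history then $k_{n+1}$ can be of order $\Gamma_n$, so its conditional moment generating function cannot be uniformly bounded. What one actually needs is a uniform exponential bound on the \emph{increment} $\Gamma_{n+1}-\Gamma_n$, which is a different (and more delicate) statement. Second, and more seriously, the uniform lower bound on the vertical-jump probability is asserted but not proved, and your sketch appeals to ``the positive probability of $W_{X_n^u}$ selecting different open levels''. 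The model hypothesis is only $\pr[W=1]>0$; no other value of $W$ is assumed to have positive mass. When $K\ge 2$ the path can re-enter the history region, so one can have $(X_n^u(1),X_n^u(2)+1)\in\Delta_n(u)$ known closed and an off-axis site at level $2$ in $\Delta_n(u)$ known open, which for $W=1$ forces the first open level to be level $2$ with its top closed --- a non-vertical jump with conditional probability one. Ruling out (or working around) such configurations is exactly the hard part, and the case analysis you defer is not routine; it is essentially the content of the lemma. The paper's construction is designed precisely to avoid this conditional analysis by always testing events on sites guaranteed to lie above the entire history region.
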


\begin{proof}
We start with some definitions. For $v$ in $\Z^2$ and $l$ in $\N$ put  
$$
    V(v,l):= \big\{ \big(v(1),v(2)+1\big),\dots,\big(v(1),v(2)+l\big)\big\},
$$
which are the first $l$ points immediately above $v$. Recall that the constant $K$  from the definition of the $W_v's$ is fixed at Section \ref{sec:main} and define the following event
$$
C(v) :=\{\text{all } w \text{ in } V(v,K) \text{ are open} \}.
$$
Notice that on $C(v)$ the path of the GRDF that starts in $v$, necessarily jumps (almost surely) to some $w \in V(v,K)$, i.e. $X^v_1 \in V(v,K)$. This is true because these sites are in the top of the first $K$ open levels of $v$ and $W_v\leq K$ (almost surely). Note that here we are using the hypothesis about the finite support of the law of the $W_v$'s and the assumption that if a level set contains more than one open site, then we choose the highest. 

Let us define the events 
$$ D(v):=\big\{W_w=1 \text{ for all } w \text{ in } V(v,K-1)\big\}
$$
and 
\begin{align}\label{E(v)}
    E(v):=C(v)\cap D(v).
\end{align}

On $D(v)$ each path of the GRDF that starts in $V(v,K-1)$ will jump to the nearest open point (in $L_1$ distance) above it. Hence on $E(v)$ one of the following cases should occur:
\begin{enumerate}
    \item [(a)] $X^v$ jumps to $\big(v(1),v(2)+K\big)$,
    \item [(b)] $X^v$ jumps to $\big(v(1),v(2)+l\big)$, for some $l=1,\dots,K-1$, and after that it moves to $\big(v(1),v(2)+K\big)$ passing consecutively by each one of the points $\big(v(1),v(2)+k\big)$, $k=l+1,...,K-1$.
\end{enumerate}
Therefore on $E(v)$ the path of the GRDF starting at $v$ hits $\big(v(1),v(2)+K\big)$ without any information about the environment above $v(2)+K$. At such time we have that a renewal occurs.

The proof is based on an argument to show that $E(X^u_m)$ occurs infinitely often. This argument also allow us to control the maximum displacement of $X^u$ between the renewal times. For $v\in\Z^2$ put
\begin{align}\label{H(u)}
H(v):=\inf\Big\{n\geq 1: \sum_{j=1}^{n}\mathbbm{1}_{\{(v(1),v(2)+j)\text{ is open}\}}=K\Big\} .
\end{align}
Take $\xi_1:= H(u)$ and let $\tilde{n}_1$ be the first index of a jump of $X^u$ from where we need information about the configuration above time $u(2) + \xi_1$ to decide where $X^u$ jumps next. This index is defined as 
\begin{align*}
    \tilde{n}_1 :=\inf\Big\{m\geq 1: \, &X^u_m(2)=\xi_1+u(2) \text{ or }
    \\&\sum_{i=1}^{u(2)+\xi_1-X^u_m(2)}\mathbbm{1}_{\{L(X^u_m,i) \text{ is open}\}}<W_{X^u_m}\Big\}.
\end{align*}
To help the understanding of the notation see Figure \ref{fig 1}.

\begin{figure}
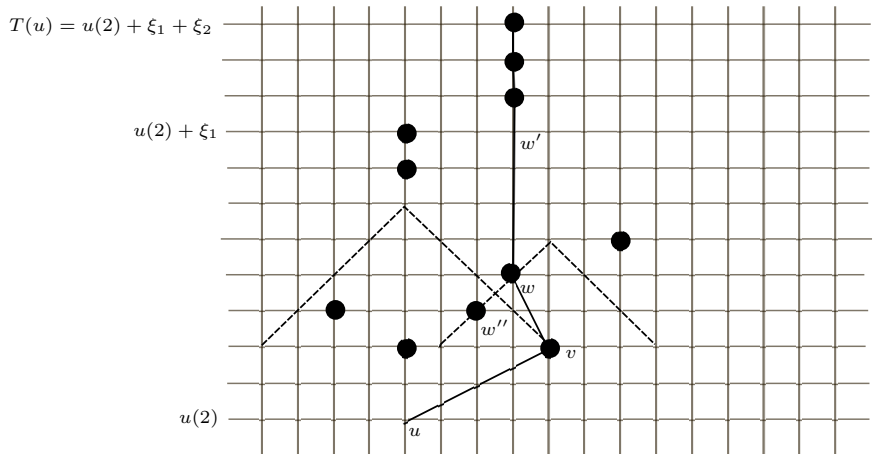

\label{fig 1}
\begin{overpic}[scale = .9]
{path3.eps}

\put(-6,5) {\tiny{$u(2)$}} 

\put(-13,49){\tiny{{$u(2)+\xi_1$}}}

\put(-32,65){\tiny{$T(u)=u(2)+\xi_1+\xi_2$}}

\put(29,3) {\tiny{$u$}} 

\put(46,25) {\tiny{$w$}}

\put(46,47) {\tiny{$w'$}}

\put(40,18.5) {\tiny{$w''$}}

\put(53,15) {\tiny{$v$}} 
\end{overpic}
\caption{ In this picture we assume that $K=3$, $W_u=3$, $W_v=1$ and $W_w= 3$. The blacks balls represent open sites.  Notice that $\xi_1=8$, $\tilde{n}_1 =2$, $X^u_{\tilde{n}_1}=w$, $w'= (X^u_{\tilde{n}_1}(1),u(2)+\xi_1)$, $\xi_2 = 3$ and $\tilde{n}_2 = 4$. Moreover the event $E(u)$ does not happen, but $E_1(u)$ does. Here while at site $v$, if $w$ was closed, then $X^u$ would jump to $w''$ and enter $\Delta_1(u)$.} 
\end{figure}

Now we use induction.

Having defined $\{\xi_1,\dots,\xi_j\}$ and $\{\tilde{n}_1,\dots,\tilde{n}_j\}$, let  us define $\xi_{j+1}$ and $t_{j+1}$ in the following way: denote by
	$$
	\eta_j:=u(2)+\sum_{i=1}^{j}\xi_i\, ,
	$$
	then take  $\xi_{j+1}$ and $\tilde{n}_{j+1}$ as follows,
	\begin{equation*}
	\xi_{j+1}:= H\big((X^u_{\tilde{n}_j}(1),\eta_j)\big)\, , \eta_{j+1}:=\xi_{j+1}+\eta_j\, ,
	\end{equation*}
	and $\tilde{n}_{j+1}$ as
	$$
		\tilde{n}_{j+1}:=\inf\Big\{m\geq \tilde{n}_j + 1: X_m(u)(2)=\eta_{j+1}\text{ or }
		W_{X_m(u)}>\sum_{i=1}^{\eta_{j+1}-X_m(u)(2)}\mathbbm{1}_{\{L(X_{m}(u),i) \text{ is open}\}}\Big\}.
		$$

From the sequences $\{\tilde{n}_j:j\geq 1\}$ and $\{\xi_j:j\geq 1\}$ define the sequence of events $\{E_j(u): j\geq 0\}$ as $E_0(u):=E(u)$ and for $j \geq 1$, 
$$
	E_j (u): = E\big( (X^u_{\tilde{n}_j}(1),\eta_j) \big)
$$
To help the understanding of the notation see again Figure \ref{fig 1}. As with $E(u)$, on $E_j(u)$ we claim that one of the following cases occurs:
\begin{enumerate}
    \item [(a')] $X^u$ jumps from $X^u_{\tilde{n}_j}$ to $\big(X^u_{\tilde{n}_j}(1),\eta_j + K\big)$, 
    \item [(b')] $X^u$ jumps from $X^u_{\tilde{n}_j}$ to $\big(X^u_{\tilde{n}_j}(1),\eta_j + l\big)$, for some $l=1,\dots,K$ and after that it moves to $\big(X^u_{\tilde{n}_j}(1),\eta_j +K\big)$ passing consecutively by each one of the points
$\big(X^j_{\tilde{n}_j}(1),\eta_j + k \big)$ for $k=l+1,...,K-1$.
\end{enumerate}
Let us see why the above claim is true. On $E_j(u)$ the sites
\begin{align}\label{X_{t_n+1}}
\big(X^u_{\tilde{n}_j}(1),\eta_j + 1\big),\dots,\big(X^u_{\tilde{n}_j}(1),\eta_j + K \big)    
\end{align}
are open and they are on the top of the first $K$ open levels of $\big(X^u_{\tilde{n}_j},\eta_j\big)$. By definition of $\tilde{n}_j$, $X^u$ jumps from $X^u_{\tilde{n}_j}$ to some site in these levels. Since the model gives preference to the highest open point, $X^u_{1+\tilde{n}_{j}}$ belongs to the set of points in (\ref{X_{t_n+1}}). If  $X^u_{1+\tilde{n}_{j}} \neq \big(X^u_{\tilde{n}_j}(1),\eta_j +K\big)$, by definition of the event $E_j(u)$, $X^u$ will move consecutively to the site immediately above it up to $\big(X^u_{\tilde{n}_j}(1),\eta_j +K\big)$.
\medskip

 Note that $\{E_j(u): j\geq 0\}$ are independent events with the same probability of success, so 
$$
    M:=\inf\{j \geq 0: E_j(u) \text{ occurs}\} + 1
$$
is a geometric random variable. Take 
$$
\tilde{\tau}(u) := \inf \Big\{ m : X^u_m (2) = u(2) + \sum_{i=1}^M \xi_i \Big\} \in [\tilde{n}_M , \tilde{n}_M + K] \, ,
$$ 
we have $\Delta_{\tilde{\tau}(u)}(u)=\emptyset$ and from this $(i)$ follows. To get $(ii)$ note that 
$$
 T(u):=X^u_{\tilde{\tau}(u)}(2)= u(2)+\sum_{i=1}^{M}\xi_i.
$$
Using Lemma \ref{sum moments} in the appendix we have $(ii)$. 

To get $(iii)$ note that $\tilde{n}_1 \leq \xi_1$ and each increment of $X^u_m$ up to $m=\tilde{n}_1$ is bounded above by $\xi_1$, so we have 
$$
    \sup_{0\leq s\leq X^u_{\tilde{n}_1}(2)}|\pi^u(s)-\pi^u(0)|\leq \xi_1^ 2.
$$
Similarly, between $\tilde{n}_j$ and $\tilde{n}_{j+1}$ the process $X^u$ does not make a jump bigger than $\sum_{i=1}^{j+1}\xi_i$ and the number of theses jumps is smaller than $\sum_{i=1}^{j+1}\xi_i$, hence
$$
    \sup_{X^u_{\tilde{n}_j}(u)\leq s\leq X^u_{\tilde{n}_{j+1}}(2)}|\pi^u(s)-\pi^u(0)|\leq \Big(\sum_{i=1}^{j+1}\xi_i\Big)^ 2.
$$
Then
\begin{align*}
\sup_{0\leq t\leq T(u)}|\pi^{u}(t)-\pi^u(0)|&\leq\sum_{k=1}^{M}[\xi_1+\dots+\xi_k]^2\leq M\big(\sum_{i=1}^{M}\xi_i\big)^2\leq\Big(\sum_{i=1}^M\xi_i \Big)^3:=Z(u).    
\end{align*}
Using Lemma \ref{sum moments} we have that $\esp\big[\big(Z(u)\big)^k\big]<\infty$ for all $k\geq 1$.

\medskip

It is simple to check that item $(iv)$ follows from the stationarity of the environment and item $(v)$ follows from the construction of the random variables $T(u)$ and $Z(u)$.
\end{proof}

\medskip

\begin{remark} \label{explicacao}
In Figure 3 we can see that it would be possible for a path to jump from its current position to a site inside an explored region. Consider the jump mechanism defined as follows: the path chooses a L1 open level above its current position and, in that level, it uniformly picks an open site. We can see, again from Figure 3, 
that it continues to be possible for a path to jump from its current position to a site inside an explored region (in Figure 3, once at site $v$ the path could choose to jump to $w$). Under both choice mechanisms the approach of Roy, Saha and Sarkar \cite{roy2013random} would not work, because their idea is based on the fact that the points above and with the same first coordinate of the current position are always in an unexplored region. The reader can check that the proof of Lemma \ref{tpr} does not work for the uniform choice jump mechanism, since we would not be able to guarantee the occurrence of jumps to sites immediately above the current position.
\end{remark}

\medskip

We can replicate recursively the construction made in proof of Lemma \ref{tpr} to get: 

\begin{corollary}\label{renewal time corollary one path}
Fix $u \in \Z^2$. Then there exist sequences of random variables $\{T_j(u):j\geq 1\}$, $\{Z_j(u): j\geq 1\}$ and $\{\tilde{\tau}_j(u):j\geq 1\}$ such that
\begin{enumerate}

\item[(i)] $\Delta_{\tilde{\tau}_j(u)}(u) = \emptyset$ for every $j\ge 1$.  

\item[ (ii)] $T_j(u) = X^u_{\tilde{\tau}_j(u)}(2)$, i.e. $\pi^{u}(T_j(u)) = X^u_{\tilde{\tau}_j(u)}(1)$, for every $j\geq 1$. 

\item[(iii)] For all $k$, $j\geq 1$ we have that $\esp[Z_j(u)^k]<\infty$ and taking $T_0(u)=u(2)$ we get that 
$$
\max \left\{ T_j(u)-T_{j-1}(u) \, , \, \sup_{T_{j-1}(u)\leq t\leq T_j(u)}|\pi^{u}(t)-\pi^{u}(T_{j-1}(u))| \right\} \leq Z_j(u) 
$$
for every $j\ge 1$. In particular for all $j$, $k\geq 1$ we have that $\esp[T_j(u)^k]<\infty$.

\item[(iv)] Put $T_0 = u(2)$ then $\{T_j(u) - T_{j-1}(u):j\geq 1\}$ and $\{Z_j(u): j\geq 1\}$ are sequences of i.i.d. random variables whose distributions do not depend on $u$. 

\item [(iv)] $\{T_j(u):j\geq 1\}$ are stopping times for the filtration $\{\mathcal{F}^u_n: n \geq u(2)\}$ defined in $(v)$ of Lemma \ref{tpr}.

\end{enumerate}
\end{corollary}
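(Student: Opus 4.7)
\bigskip

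\noindent\textbf{Proof proposal for Corollary \ref{renewal time corollary one path}.}

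The plan is to iterate the construction of Lemma \ref{tpr} inductively, each step being applied to the path shifted to its current renewal position. Concretely, set $\tilde\tau_1(u) := \tilde\tau(u)$, $T_1(u) := T(u)$ and $Z_1(u) := Z(u)$ as produced by Lemma \ref{tpr}. Having defined $\tilde\tau_j(u)$, $T_j(u)$ and $Z_j(u)$ so that $\Delta_{\tilde\tau_j(u)}(u) = \emptyset$, let $u_j := X^u_{\tilde\tau_j(u)}$ and apply Lemma \ref{tpr} to the site $u_j$. This yields a positive integer $\tilde\tau(u_j)$, a time $T(u_j) = X^{u_j}_{\tilde\tau(u_j)}(2)$ and a bound $Z(u_j)$ controlling the spatial and temporal increments of $\pi^{u_j}$ on $[u_j(2),T(u_j)]$. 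Because $\Delta_{\tilde\tau_j(u)}(u)=\emptyset$, the GRDF path started at $u_j$ follows exactly the same dynamics as $\pi^{u_j}$, so we may simply define $\tilde\tau_{j+1}(u) := \tilde\tau_j(u) + \tilde\tau(u_j)$, $T_{j+1}(u) := T(u_j)$, $Z_{j+1}(u) := Z(u_j)$, and items (i)--(iii) then follow from the corresponding items of Lemma \ref{tpr}.

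The distributional statement in (iv) is the crucial point and is where the strong Markov property of the i.i.d. environment enters. Since $T_j(u)$ is a stopping time for $\{\mathcal{F}^u_n\}$ (by (v) of Lemma \ref{tpr} applied inductively), and since the history region $\Delta_{\tilde\tau_j(u)}(u)$ is empty, the family $\{(U_v,W_v): v(2) > T_j(u)\}$ is independent of $\mathcal{F}^u_{T_j(u)}$ and has the same product distribution as the original environment, translated to have base height $T_j(u)$. The random variables $T_{j+1}(u) - T_j(u)$ and $Z_{j+1}(u)$ are measurable functions of this shifted environment alone, so they are independent of $(T_i(u) - T_{i-1}(u), Z_i(u))_{i\leq j}$ and distributed like $T(u) - u(2)$ and $Z(u)$ respectively; the fact that these distributions do not depend on $u$ is precisely item (iv) of Lemma \ref{tpr}. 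The stopping-time claim (v) is immediate from the inductive definition, since $\tilde\tau_{j+1}(u)$ is built from $\tilde\tau_j(u)$ by adding a $\{\mathcal{F}^{u_j}_n\}$-stopping time that only uses information about the environment strictly above height $T_j(u)$.

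Finally, the moment estimates on $T_j(u)$ follow by writing $T_j(u) - u(2) = \sum_{i=1}^j (T_i(u) - T_{i-1}(u))$, which is a finite sum of i.i.d. random variables with all moments finite by item (ii) of Lemma \ref{tpr}; an appeal to Lemma \ref{sum moments} in the appendix, already used in the proof of Lemma \ref{tpr}, closes this point.

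The only real subtlety is making the independence argument in the second paragraph completely rigorous: one must verify that the construction defining $\tilde\tau(u_j)$, $T(u_j)$ and $Z(u_j)$ in the proof of Lemma \ref{tpr} depends on the environment only through $\{(U_v,W_v): v(2) > T_j(u)\}$, which is true by the very definition of the renewal time (the history region being empty means no previously revealed site lies strictly above $T_j(u)$). Once this is spelled out, the remaining assertions are bookkeeping.
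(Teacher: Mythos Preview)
Your proposal is correct and follows exactly the approach the paper indicates: the paper's entire argument for this corollary is the single sentence ``We can replicate recursively the construction made in proof of Lemma \ref{tpr},'' and you have spelled out precisely that recursion together with the independence justification. One small remark: the appeal to Lemma \ref{sum moments} for the moments of $T_j(u)$ is unnecessary overkill, since here $j$ is deterministic and Minkowski's inequality (or just expanding the power) already gives finiteness of all moments of a fixed finite sum of i.i.d.\ variables with all moments finite.
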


\medskip

The main result of this section is the following:

\begin{proposition}\label{renewal time proposition} 
 Fix $m\ge 1$ and $u_1,\dots,u_m \in \Z^2$ distinct sites at the same time level, i.e with equal second component. Then there exist random variables $T(u_1,\dots,u_m)$, $Z(u_1,\dots,u_m)$ and $\tau(u_i)\ge 1$ for $i=1,\dots,m$ such that 
 \begin{enumerate}
 \item[(i)] $\Delta_{\tau(u_i)}(u_i) = \emptyset$, for all $i=1,\dots,m$.
 
 \item[(ii)] $T(u_1,\dots,u_m) = X_{\tau(u_i)}^{u_i}(2)$, i.e. $\pi^{u_i}\big(T(u_1,\dots,u_m)\big)= X_{\tau(u_i)}^{u_i}(1)$, for all $i=1,\dots,m$. The distribution of $T(u_1,\dots,u_m) - u_1(2)$ does not depend on $u_1,\dots,u_m$. 
 
 \item[(iii)]For all $i=1,\dots,m$ we have 
 $$
 \max \left\{ T(u_1,\dots,u_m) - u_1(2) \, , \, \sup_{u_1(2)\le t \le T(u_1,\dots,u_m)} |\pi^{u_i}(t) - u_i(1)| \right\} \leq Z(u_1,\dots,u_m)
 $$
 and the distribution of $Z(u_1,\dots,u_m)$ does not depend on  $u_1,\dots,u_m$. Also for all $k\geq 1$ we have that $\esp\big[Z(u_1,\dots,u_m)^k\big]<\infty$ which also implies that $\esp\big[T(u_1,\dots,u_m)^k\big]<\infty$.
 \end{enumerate}
 \end{proposition}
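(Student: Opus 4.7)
The plan is to extend the single-path renewal construction of Lemma~\ref{tpr} and Corollary~\ref{renewal time corollary one path} to the joint evolution of the $m$ paths through iterative alignment of individual renewal times. For each path $\pi^{u_i}$, let $\{T_j(u_i)\}_{j\geq 1}$ denote the sequence of individual renewal times given by Corollary~\ref{renewal time corollary one path}. I would set $\tilde{T}_0 := u_1(2)$ and, inductively given $\tilde{T}_k$, take $j_i(k) := \min\{j : T_j(u_i) \ge \tilde{T}_k\}$ for each $i$ and define $\tilde{T}_{k+1} := \max_i T_{j_i(k)}(u_i)$. The iteration is stopped at the first step $k$ with $\tilde{T}_{k+1} = \tilde{T}_k$; at that step every path has a renewal exactly at $\tilde{T}_k$, and I would set $T(u_1,\dots,u_m) := \tilde{T}_k$ and $\tau(u_i) := j_i(k)$. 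Items (i) and (ii) then follow directly from the construction and from item (i) of Corollary~\ref{renewal time corollary one path}.

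The main obstacle is to show that this iteration almost surely terminates after a geometrically bounded number of steps. The crucial observation is that the individual renewal in Lemma~\ref{tpr} is triggered by the ``clean column'' event $E(v)$, which depends only on the $K$ sites in the vertical column of width one immediately above~$v$. Conditional on the $\sigma$-algebra describing the state at step $k$, the path that achieved the maximum at step $k-1$ has empty history region at $\tilde{T}_k$, hence the environment strictly above $\tilde{T}_k$ along its column is independent of the past; for paths that have not yet coalesced, the current first coordinates are pairwise distinct, so the corresponding clean-column events above each path's current position involve pairwise disjoint sets of sites and are therefore conditionally independent. Combining this with a uniform lower bound on the probability of each clean-column event, I expect to extract a constant $\epsilon > 0$ such that at every step the conditional probability of simultaneous stabilization is at least $\epsilon$, from which geometric termination follows.

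Once the iteration is shown to terminate geometrically, item (iii) follows by bounding the increment $\tilde{T}_{k+1} - \tilde{T}_k$ at each step by the maximum of $m$ inter-renewal increments and by bounding the displacement of each path up to $\tilde{T}_k$ by the sum of the single-path displacement variables $Z_j(u_i)$ from Corollary~\ref{renewal time corollary one path}(iii). Each of these has finite moments of every order, so summing or maximising a geometric number of such iid-dominated terms preserves that property via Lemma~\ref{sum moments}, yielding the claimed finite-moment bounds on both $T(u_1,\dots,u_m)$ and $Z(u_1,\dots,u_m)$. The distributional invariance claimed in items (ii) and (iii) is a consequence of the translation invariance of the environment $(U_v, W_v)_{v\in\Z^2}$ and the shift-covariance of the entire construction, which uses only relative positions and column events above each $u_i$.
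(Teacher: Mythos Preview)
Your iteration based on the individual renewal sequences $\{T_j(u_i)\}_{j\ge 1}$ is well-defined, but the argument for geometric termination has a genuine gap. Termination at step $k$ requires that every path's renewal sequence passes through the single value $\tilde{T}_k$; the clean-column events $E(v)$ you invoke guarantee that each path has \emph{some} renewal shortly above its current position, but at step $k$ the $m$ paths sit at renewal positions $T_{j_i(k-1)}(u_i)$ that are in general at \emph{different} time levels, so the clean-column events placed above those positions produce renewals at different heights, not a common one. In particular, your claim that ``the current first coordinates are pairwise distinct, so the corresponding clean-column events \dots\ involve pairwise disjoint sets of sites'' does not address the real difficulty: even if those events are independent and all succeed, they do not force the individual sequences $\{T_j(u_i)\}$ to hit the same integer. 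Showing that $m$ correlated discrete renewal sequences intersect, with a geometric tail on the waiting time, is a separate and nontrivial statement that your sketch does not establish.

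The paper's proof avoids this issue by taking a different route. It does \emph{not} try to intersect the pre-existing individual renewal sequences. Instead it fixes the renewal sequence $\{T_j\}=\{T_j(u_1)\}$ of one distinguished path as a backbone and, at each backbone time $T_j$, looks at the other path $u_2$ at the first step $s_j$ where it needs environment above $T_j-K$, then tests the fresh event $\widehat{E}_j(u_2)=E\big((X^{u_2}_{s_j}(1),\,T_j-K)\big)$. On this event the second path is funneled straight up the clean column and lands exactly at time $T_j$ with empty history region, so $T_j$ becomes a \emph{new} renewal for path~2, manufactured to coincide with path~1's. Because the events $\widehat{E}_j(u_2)$ involve environment in disjoint horizontal strips $(T_j-K,T_j]$ that path~1 has not explored (except possibly along its own column, which is handled separately), they are i.i.d.\ with common positive probability, giving the geometric bound immediately. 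The key conceptual point missing from your proposal is that one must \emph{engineer} a synchronized renewal at a prescribed level rather than wait for the single-path constructions to align by chance.
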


\begin{proof}

Let us prove the proposition for $m=2$, the reader can check that the proof can be adapted in a straightforward way to the case $m>2$. The idea of the proof is very similar to the one used in Lemma \ref{tpr}. Fix $u_1, u_2 \in \Z^2$ with $u_1(2)=u_2(2)$ and let $\{T_j:j\geq 1\}$ be the renewal times for $X^{u_1}$ given by Corollary \ref{renewal time corollary one path}. Define $s_j$ as the first index $n$ such that $X^{u_2}_n$ needs information about the environment above $T_j-K$ before it jumps again:
\begin{align}\label{s_n}
s_j:=\inf\Big\{n\geq 0: &X_n^{u_2}(2)=T_j - K\text{ or }
\\& W_{X_n^{u_2}}>\sum_{i=1}^{T_j-K-X_n^{u_2}(2) }\mathbbm{1}_{\{L(X_{n}^{u_2},i) \text{ is open}\}}\Big\}\nonumber.
\end{align}
Also define the events 
$$
 \widehat{E}_j(u_2):=E\Big( (X_{s_j}^{u_2}(1),T_j-K) \Big) \, ,
$$
where the event $E(v)$, for $v \in \Z^2$, was defined in (\ref{E(v)}). 

\medskip

Since the only sites of the environment within time interval $[T_j-K ,T_j]$ that $X^{u_1}$ observes are those with first component equal to $\pi^{u_1}(T_j)$, the event $\widehat{E}_j(u_2)$ is independent of $\pi^{u_1}$ if $X_{s_j}^{u_2}(1) \neq \pi^{u_1}(T_j)$. If $X_{s_j}^{u_2}(1) = \pi^{u_1}(T_j)$ then $X^{u_1}$ and $X^{u_2}$ will have coalesced before or at time $T_j$, in this case we replace $\widehat{E}_j(u_2)$ by an independent event that has the same probability, which we will continue to denote by $\widehat{E}_n(u_2)$. This observation is to obtain a sequence of independent events, $(\widehat{E}_n(u_2))_{n\ge 1}$, that have the same probability and are also independent of the path $\pi^{u_1}$. Then the random variable 
$$
    G:=\inf\{j \geq 1: \widehat{E}_j(u_2) \text{ happens} \}
$$
is a geometric random variable. Take $\{\tilde{\tau}_j(u_1)\}_{n\geq 1}$ as in Corollary \ref{renewal time corollary one path} for $u_1$ and $\{s_j\}_{j\geq 1}$ as defined in (\ref{s_n}). Define $\tau(u_1):=\tilde{\tau}_{G}(u_1)$ and
$$
\tau(u_2):= \inf \Big\{ m : X^{u_2}_m(2) = T_G \Big\} \in [ s_G , s_{G} + K] \, .
$$ 
We have that
$$
    \Delta_{\tau(u_1)}(u_1)=\Delta_{\tau(u_2)}(u_2)=\emptyset.
$$
We also get that
$$
T(u_1,u_2):= T_{G}(u_1)=X_{\tau(u_1)}^{u_1}(2)=X_{\tau(u_2)}^{u_2}(2)
$$ 
is a common renewal time for the paths $\pi^{u_1}$ and $\pi^{u_2}$. 

Hence we have (i) and (ii) in the statement. The other claims in the statement follows from analogous arguments as those used in the proof of Lemma \ref{tpr}.
\end{proof}

\medskip

\begin{remark} (1) The times constructed in Proposition \ref{renewal time proposition}, and also referred to later at Corollary \ref{renewal time corollary}, are not unique and its specific definition are not relevant. However for the sake of simplifying the exposition, we will be always referring to those times constructed as in the proof of Proposition \ref{renewal time proposition}.
(2) Note that the parameter of the geometric random variable in the construction of the renewal time for the paths stating at $(0,0)$ and $(m,0)$, $m\in\Z,m\neq 0$, does not depend on $m$. 
This remark will be useful for Lemma \ref{crossing lower bound}.
\end{remark}

\medskip

Fix sites $u_1,\dots,u_m$ in $\Z^2$. To simplify suppose that $u_1$ is at the same time level or above $u_i$ for every $i=2,\dots m$. Wait until each process $X^{u_2},\dots,X^{u_m}$ needs information about the environment above $u_1(2)$, then we can proceed as in the proof of Proposition \ref{renewal time proposition} to obtain a similar renewal structure for the processes $X^{u_1},\dots,X^{u_m}$ even if they don't start necessarily at the same time level. So we have the following result:

\medskip

\begin{corollary}\label{renewal time corollary}
Fix $m\ge 1$ and $u_1,\dots,u_m \in \Z^2$ distinct points such that $u_1$ is at the same time level or above $u_i$ for every $i=2,\dots m$. Then there exist sequences of random variables $\{T_j(u_1,\dots,u_m)\}_{j\geq 1}$, $\{Z_j(u_1,\dots,u_m)\}_{j\geq 1}$ and $\{\tau_j(u_i)\}_{j\geq 1}$ for $i=1,\dots m$ such that,

\begin{enumerate}

\item[(i)] $\Delta_{\tau_j(u_i)}(u_i) = \emptyset$ for every $i=1,\dots,m$ and $j\geq 1$.  

\item[ (ii)] $X_{\tau_j(u_1)}^{u_1}(2) = X_{\tau_j(u_i)}^{u_i}(2)=T_j(u_1,\dots,u_m)$, i.e. $\pi^{u_i}(T_j(u_1,\dots,u_m)) = X_{\tau_j(u_i)}^{u_i}(1)$, for every $i=1,\dots,m$ and $j\geq 1$. 

\item[(iii)] Taking $T_0(u_1,\dots,u_m)=u_1(2)$ we have that
\begin{eqnarray*}
\lefteqn{\max \Big\{ \big| T_j(u_1,\dots,u_m) - T_{j-1}(u_1,\dots,u_m) \big| \, , } \\
& & \, \sup_{T_{j-1}(u_1,\dots,u_m) \leq t \leq T_j(u_1,\dots,u_m)}|\pi^{u_i}(t)-\pi^{u_i}(T_{j-1}(u_1,\dots,u_m))|
\Big\} \leq Z_j(u_1,\dots,u_m) \, ,
\end{eqnarray*}
for all $i=1,\dots,m$ and $j\geq 1$. For all $k,j\geq 1$ we have that 
$\esp\big[\big(Z_j(u_1,\dots,u_m)\big)^k\big]<\infty$ which also implies that
$\esp\big[\big(T_j(u_1,\dots,u_m)\big)^k\big]<\infty$.

\item[(iv)] $\{T_j(u_1,...,u_m) - T_{j-1}(u_1,...,u_m):j\geq 1\}$ and $\{Z_j(u_1,...,u_m): j\geq 1\}$ are sequences of i.i.d. random variables whose distributions do not depend on $u_1$, ... ,$u_m$. 

\item [(v)] $\{T_j(u_1,\dots,u_m):j\geq 1\}$ are stopping times for the filtration $\{\mathcal{F}^{u_1}_n: n \geq u_1(2)\}$ defined in $(v)$ of Lemma \ref{tpr}.
\end{enumerate}
\end{corollary}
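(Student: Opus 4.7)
The plan is to reduce to Proposition~\ref{renewal time proposition} by first aligning the paths at a common time level and then iterating. Assume without loss that $u_1(2) \ge u_i(2)$ for all $i$. For each $i = 2, \dots, m$, I would define the stopping time
$$
s^{(i)} := \inf\Big\{n \geq 0 : X^{u_i}_n(2) = u_1(2) \text{ or } W_{X^{u_i}_n} > \sum_{j=1}^{u_1(2) - X^{u_i}_n(2)} \mathbbm{1}_{\{L(X^{u_i}_n,j)\text{ is open}\}}\Big\},
$$
that is, the first index at which $X^{u_i}$ needs information strictly above time $u_1(2)$ in order to decide its next jump. By construction $X^{u_i}_{s^{(i)}}(2) \le u_1(2)$, and crucially up to this point the evolution of $X^{u_i}$ depends only on the configuration of the environment on or below time $u_1(2)$. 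The moment bounds for $s^{(i)}$ and for the corresponding spatial displacements follow from the same geometric-tail argument as in Lemma~\ref{tpr}, using a version of the events $E(v)$ restricted to the slab $\{v(2) < u_1(2)\}$.

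Once the paths have been advanced to times $X^{u_i}_{s^{(i)}}(2)$, I am in exactly the situation of Proposition~\ref{renewal time proposition}: $m$ paths each about to request information from a common slab strictly above time $u_1(2)$. Applying the construction in that proof (with the obvious extension from $m=2$ to general $m$: intersect events of the form $\widehat{E}(\,\cdot\,)$ associated to each path, replacing any coalesced ones with independent copies of equal probability), I obtain a first common renewal time $T_1(u_1,\dots,u_m)$ together with indices $\tau_1(u_i)$ satisfying (i) and (ii). The bound $Z_1(u_1,\dots,u_m)$ in (iii) absorbs both the alignment displacement up to $s^{(i)}$ and the further displacement produced during the renewal construction, and the required moment estimates follow from Lemma~\ref{sum moments} applied to a finite sum of geometric sums of variables with all moments finite.

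For the iteration in Step~(iii)--(iv) I use the renewal property: at $T_j(u_1,\dots,u_m)$ each history region $\Delta_{\tau_j(u_i)}(u_i)$ is empty, hence the environment strictly above $T_j$ is independent of $\mathcal{F}^{u_1}_{T_j}$. Consequently the shifted paths $\{X^{u_i}_{\tau_j(u_i)+k}(\cdot) - (0,T_j)\}_{k \ge 0}$ have the same joint distribution as an independent copy of the original $m$-path system started at the points $(X^{u_i}_{\tau_j(u_i)}(1),0)$, which still lie on a common time level. Applying the $m=m$ version of Proposition~\ref{renewal time proposition} to this independent copy yields $T_{j+1} - T_j$ and $Z_{j+1}$, independent of $\mathcal{F}^{u_1}_{T_j}$ and distributed as $T_1 - T_0$ and $Z_1$; this gives (iv) by induction. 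The stopping time property (v) is immediate from the construction since each $\tau_j(u_1)$, and hence each $T_j$, is defined through events measurable with respect to $\mathcal{F}^{u_1}_{T_j}$.

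The main technical obstacle is the alignment step: one has to verify carefully that defining $s^{(i)}$ does not break the independence used in the proof of Proposition~\ref{renewal time proposition}. Concretely, the auxiliary events $\widehat{E}_j$ used to produce the common renewal must be built from sites strictly above time $u_1(2)$, so that they are independent of everything observed during the alignment phase; and when two paths happen to share a first coordinate at their alignment times (the analogue of the coincidence in the proof of Proposition~\ref{renewal time proposition}), one must substitute an independent event of the same probability so that the geometric random variable counting successful attempts remains truly geometric. Once this independence bookkeeping is set up, all moment bounds follow routinely from Lemma~\ref{sum moments}.
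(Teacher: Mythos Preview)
Your proposal is correct and follows essentially the same approach as the paper, which only gives a one-sentence sketch in the paragraph preceding the corollary: wait until each $X^{u_i}$ needs information about the environment above $u_1(2)$, then proceed as in Proposition~\ref{renewal time proposition}. Your expansion of the details---the alignment indices $s^{(i)}$, the iteration via the renewal property at successive $T_j$, and the independence bookkeeping for the $\widehat{E}$-events---is precisely what the paper leaves to the reader; one small refinement is that the substitution of independent copies should be triggered whenever two paths share a first coordinate at the relevant step (not only upon coalescence), so that the joint success probability of the $\widehat{E}_j(u_i)$ across $i$ remains constant in $j$.
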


\bigskip

\section{Skorohod's Scheme for the GRDF model.}\label{sec:skorohod}

In this section we describe the Skorohod scheme for the GRDF which was introduced in \cite{coletti2009scaling} to estimate the tail probability of coalescing times. We also rely on an adaptation of this method introduced in \cite{coletti2014convergence} to deal with systems allowing crossings. Let us first recall Skorohod's Representation Theorem for random variables taking values in  $\mathbb{Z}$. The reader could check Section 8.1 in \cite{durrett} for a detailed review.  

\smallskip

\begin{proposition} \label{SRT}
 If $X$ is a random variable taking values in $\mathbb{Z}$ with $\esp[X]=0$ and $\esp[X^2]<\infty$, then there exist a stopping time $S$ for a standard Brownian motion $\{B(s):s\geq 0\}$, so that 
$B(S)\overset{d}{=}X$ and $S=\inf\{s\geq 0: B(s)\notin (U,V)\}$ where the random vector $(U,V)$ is independent of $\{B(s):s\geq 0\}$ and takes values in $\{(0,0)\}\cup \ \{\dots,-2,-1\}\times\{1,2,\dots\}$.
\end{proposition}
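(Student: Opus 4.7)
The plan is to follow the classical two-step construction of a Skorohod embedding, specialized to the integer-valued case, since the prescribed form of $(U,V)$ forces the distribution of $X$ to be supported on the integers. First I would treat the elementary building block: for integers $i \le -1 < 1 \le j$, let $S_{i,j} = \inf\{s \ge 0 : B(s) \notin (i,j)\}$ for a standard Brownian motion $B$. By the optional stopping theorem applied to $B$ (bounded martingale up to $S_{i,j}$), we get the gambler's ruin identities
\begin{align*}
\pr\big[B(S_{i,j})=j\big] = \frac{-i}{j-i}, \qquad \pr\big[B(S_{i,j})=i\big] = \frac{j}{j-i},
\end{align*}
and applying optional stopping to $B(s)^2 - s$ yields $\esp[S_{i,j}] = (-i)j$, which will be the key moment identity.

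Next I would assemble $X$ as a mixture of such two-point distributions. Write $p_0 = \pr[X=0]$, $\mu_+(\{j\}) = \pr[X=j]$ for $j\ge 1$ and $\mu_-(\{i\})=\pr[X=i]$ for $i \le -1$, and set $c := \esp[X\mathbbm{1}_{\{X>0\}}] = -\esp[X\mathbbm{1}_{\{X<0\}}] > 0$ (the equality uses $\esp[X]=0$, and positivity is assumed to avoid the trivial case $X\equiv 0$). Define a law on $\{(0,0)\}\cup\{\ldots,-2,-1\}\times\{1,2,\ldots\}$ by
\begin{align*}
\pr\big[(U,V)=(0,0)\big] = p_0, \qquad
\pr\big[(U,V)=(i,j)\big] = \frac{j-i}{c}\,\mu_-(\{i\})\,\mu_+(\{j\}), \quad i\le -1,\ j\ge 1.
\end{align*}
A direct computation using $\esp[X]=0$ shows that the total mass equals $p_0 + (1-p_0)=1$, so this is a bona fide probability. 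Sample $(U,V)$ independently of $B$, and set $S = \inf\{s \ge 0 : B(s) \notin (U,V)\}$ (so $S=0$ on $\{(U,V)=(0,0)\}$); by independence, $S$ is a stopping time for the filtration generated by $B$ enlarged by $(U,V)$.

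To verify $B(S) \overset{d}{=} X$, I would condition on $(U,V)$ and use the identities from the first step. For $j\ge 1$,
\begin{align*}
\pr\big[B(S)=j\big]
= \sum_{i\le -1} \frac{-i}{j-i}\cdot\frac{j-i}{c}\,\mu_-(\{i\})\mu_+(\{j\})
= \frac{\mu_+(\{j\})}{c}\sum_{i\le -1}(-i)\mu_-(\{i\})
= \mu_+(\{j\}),
\end{align*}
and symmetrically for $j\le -1$, while $\pr[B(S)=0]=p_0$ is immediate from the atom at $(0,0)$. Finally, integrating the identity $\esp[S\mid (U,V)=(i,j)]=(-i)j$ against the law of $(U,V)$ and using $\esp[X^2]<\infty$ gives $\esp[S] = \esp[X^2] <\infty$, confirming the embedding is well-behaved.

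The proof is essentially routine once the joint law of $(U,V)$ is written down correctly, so the only delicate point is getting the normalization right and verifying that the two marginal computations (for $j>0$ and $j<0$) both collapse to $\mu_\pm$ using the mean-zero hypothesis; everything else follows from optional stopping. I would note that this is the standard argument in Durrett, so in the paper one may simply cite it rather than repeat the computation.
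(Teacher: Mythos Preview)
Your proof is correct and follows the standard Skorohod embedding construction from Durrett; the paper itself does not prove this proposition but simply cites Durrett, so your argument is precisely the one being referenced. As you yourself note in the final sentence, citing Durrett is exactly what the paper does.
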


\medskip

 Fix $m\in\Z, m\neq 0 $, and let us represent the distance between the paths $\pi^{(0,0)}$ and $\pi^{(m,0)}$ on their common renewal times by $\{Y^m_n:n\geq 0\}$; i.e. $Y^m_0:=m$ and
\begin{align*}
 Y^m_n:= X_{\tau_n(u_m)}^{u_m}(1)-X_{\tau_n(u_0)}^{u_0}(1) \, \text{ for } \, n\geq 1,
\end{align*}
where $u_0 = (0,0)$, $u_m = (m,0)$ and the sequences $\{\tau_n(u_0):n\geq 1\}$ and $\{\tau_n(u_m): n\geq 1\}$ are as in the statement of Corollary \ref{renewal time corollary}. 

\begin{remark}
The processes $\{Y^m_n:n\geq 0\}$, $m\ge 1$, are spatially inhomogeneous random walks with mean zero square integrable independent increments and thus they are also square integrable martingales.
\end{remark}

In Section \ref{coalescing time} we obtain estimates on the tail probability of the coalescing time for the paths $\pi^{(0,0)}$ and $\pi^{(m,0)}$. This coalescing time turns out to be $X^{u_0}_{\tau_\theta(u_0)}(2)$ with $\theta = \inf \{ n : Y^m_n = 0\}$. The Skorohod scheme furnishes a representation of $\{Y^m_n:n\geq 0\}$ that allows us to obtain proper estimates for the hitting time $\theta$. A difficulty that arises here is due to crossings of the paths $\pi^{(0,0)}$ and $\pi^{(m,0)}$, which means that $\{Y^m_n:n\geq 0\}$ can alternate between positive and negative values before $\theta$, this requires a proper control on the overshoot distribution of the process when it changes sign. 
For all $n\geq 1$ we can write
\begin{align*}
    Y_{n}^m = \sum_{j=1}^n\Big\{\big[X_{\tau_j(u_m)}^{u_m}(1)-X_{\tau_{j-1}(u_m)}^{u_m}(1)\big]- \big[X_{\tau_j(u_0)}^{u_0}(1)-X_{\tau_{j-1}(u_0)}^{u_0}(1)\big] \Big\} + m.
\end{align*}
Take $Z$ as in the statement of Corollary \ref{renewal time corollary} for the paths of the GRDF starting in $(0,0)$ and $(m,0)$. By symmetry and translation invariance of the GRDF model, we have that
\begin{align*}
    \esp\big[X_{\tau_j(u_0)}^{u_0}(1)-X_{\tau_{j-1}(u_0)}^{u_0}(1)\big]=\esp\big[X_{\tau_j(u_m)}^{u_m}(1)-X_{\tau_{j-1}(u_m)}^{u_m}(1)\big] = 0,
\end{align*}
and 
\begin{align*}
        \esp\Big[\big(X_{\tau_j(u_0)}^{u_0}(1)-X_{\tau_{j-1}(u_0)}^{u_0}(1)\big)^2\Big]=\esp\Big[\big(X_{\tau_j(u_m)}^{u_m}(1)-X_{\tau_{j-1}(u_m)}^{u_m}(1)\big)^2\Big]\leq \esp\big[Z^2\big]
\end{align*}
for all $j\geq 1$. 

Applying Proposition \ref{SRT} recursively, we can fix a standard Brownian motion $B = \{B(s):s\geq 0\}$ and stopping times $\{S_i:i\geq 1\}$ for the canonical filtration of $B$ such that
\begin{align*}
    \big[X_{\tau_j(u_m)}^{u_m}(1)-X_{\tau_{j-1}(u_m)}^{u_m}(1)\big]- \big[X_{\tau_j(u_0)}^{u_0}(1)-X_{\tau_{j-1}(u_0)}^{u_0}(1)\big] \overset{d}{=} B(S_n)- B(S_{n-1}),
\end{align*}
for all $n\geq 1$, where $S_0 = 0$ and
$$
 S_n:=\inf \Big\{ s\geq S_{n-1}: B(s)-B(S_{n-1}) \notin \big(U_n(B(S_{n-1})+m),V_n(B(S_{n-1})+m) \big) \Big\},
$$
where $\big\{(U_i(l),V_i(l)): l\in\Z, l\neq 0, i\geq 1\big\}$ is a family of independent random vectors taking values in $\{(0,0)\}\cup \ \{\dots,-2,-1\}\times\{1,2,\dots\}$ such that for every fixed $l \in\Z$, $(U_i(l),V_i(l))$, $i\ge 1$, are identically distributed. Since $\{B(S_n)-B(S_{n-1}): n\geq 1\}$ and 
$$
\Big\{\big[X_{\tau_j(u_m)}^{u_m}(1)-X_{\tau_{j-1}(u_m)}^{u_m}(1)\big]- \big[X_{\tau_j(u_0)}^{u_0}(1)-X_{\tau_{j-1}(u_0)}^{u_0}(1)\big]: n\geq 1 \Big\}
$$
are sequences of independent random variables, we have that
$$
Y^m_n \stackrel{d}{=}B(S_n) + m \quad \forall \, n\ge 0 \, .
$$
The above representation is called here "the Skorohod's scheme" for the difference process $\{Y^m_n:n\geq 0\}$.

Note that the case $(U_n(B(S_{n-1})),V_n(B(S_{n-1}))) = (0,0)$ implies that $Y^m_n = Y^m_{n-1}$ and $S_n = S_{n-1}$ and so
the sequence $\{S_i:i\geq 1\}$ does not need to be strictly increasing. 

\medskip

The next Lemma already shows how useful the above representation is for the difference process $\{Y^m_n:n\geq 0\}$. Note that if $Y^m_n = r > 0$ and $Y^m_\cdot$ changes sign at time $n+1$, then we can have $Y^m_{n+1} = U_n(r)+r < 0$. So the conditional distribution $U_n(r)+r|\{U_n(r)+r<0\}$ can be used to control the overshoot distribution for the difference process given that it is at state $r$ when it changes sign. However $U_n(r)+r|\{U_n(r)+r<0\}$ and the overshoot distribution are not exactly the same since we may have $Y^m_{n+1} = V_n(r) + r \neq U_n(r)+r$. Lemma \ref{crossing lower bound} below gives an uniform control with respect to $r$ on $U_n(r)+r|\{U_n(r)+r<0\}$. 

\medskip

\begin{lemma}\label{crossing lower bound}
 There exists a random variable $\widehat{R}$ such that $\esp\big[ \widehat{R}^k\big]<\infty$ for all $k \geq 1$, and
 \begin{align*}
    U_1(m)+m|\{U_1(m)+m<0\}\geq^{st} - \widehat{R},
 \end{align*}
 for all $m\geq 1$, where $\geq^{st}$ means  “is stochastically greater or equal than”.
\end{lemma}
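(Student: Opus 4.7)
My strategy is to combine the displacement bound from Corollary \ref{renewal time corollary} with the uniformity supplied by the Remark immediately preceding the lemma. The first step gives a pointwise almost-sure bound on $-(U_1(m)+m)$ on the conditioning event; the second step will be used to lift the pointwise estimate to a conditional stochastic domination uniform in $m$.

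First I observe that the increment
\[
X_m := Y^m_1 - m = \bigl(X^{u_m}_{\tau_1(u_m)}(1) - m\bigr) - X^{u_0}_{\tau_1(u_0)}(1)
\]
satisfies $|X_m| \leq 2Z$ a.s., where $Z = Z_1(u_0,u_m)$ has all moments finite and a distribution not depending on $m$ (Corollary \ref{renewal time corollary}). Since $B(S_1) \stackrel{d}{=} X_m$ and $B(S_1) \in \{U_1(m), V_1(m)\}$ with positive conditional probability of each outcome, the marginal support of $U_1(m)$ is contained in $\mathrm{supp}(X_m) \cup \{0\}$. In particular $U_1(m) \geq -2Z$ a.s. On the event $\{U_1(m)+m < 0\}$, this yields the pointwise estimate
\[
-(U_1(m)+m) \;\leq\; 2Z - m \;\leq\; 2Z,
\]
so the numerator bound $P\bigl(-(U_1(m)+m) \geq t,\; U_1(m)+m<0\bigr) \leq P(2Z \geq t)$ holds without any extra work.

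To upgrade this to the conditional statement I need a uniform lower bound on $P(U_1(m)+m<0)$. Here the Remark following Proposition \ref{renewal time proposition} is crucial: the parameter of the geometric random variable that produces the first common renewal time for the two paths does not depend on $m$. Using this I would construct a ``forcing'' event $A$ depending only on the configuration of a fixed finite window around the starting sites, having $P(A) \geq c$ uniformly in $m$, such that on $A$ the two paths cross (forcing $Y^m_1 < 0 \subseteq \{U_1(m) + m < 0\}$) and $Z$ stays bounded by a deterministic constant. Combined with the pointwise estimate this yields, for every $t > 0$,
\[
P\bigl(-(U_1(m)+m) \geq t \,\big|\, U_1(m)+m<0\bigr) \;\leq\; \frac{P(2Z \geq t)}{c},
\]
and I define $\widehat R$ to be any random variable with tail $P(\widehat R \geq t) = \min\{1, P(2Z \geq t)/c\}$; all its moments are finite because those of $Z$ are.

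The main obstacle is the uniform lower bound $P(U_1(m)+m<0) \geq c$. Since the probability of an actual crossing $\{Y^m_1 < 0\}$ tends to zero with $m$, the naive inclusion $\{Y^m_1 < 0\} \subseteq \{U_1(m)+m<0\}$ cannot be used directly; one must genuinely exploit the $m$-independence of the geometric parameter to exhibit a single combinatorial configuration of openness/closedness and of the $W_u$ values near the origin that forces a crossing regardless of $m$. A secondary subtlety is that the Skorohod pair $(U_1(m), V_1(m))$ is not uniquely determined by the marginal of $X_m$; if the forcing construction above turns out to be insufficient, one has room to choose the joint law of $(U_1(m), V_1(m))$, subject to the marginal constraint, so as to concentrate the mass of $U_1(m)$ on the extremes of the support of $X_m$, thereby boosting $P(U_1(m)+m<0)$ while preserving the pointwise bound.
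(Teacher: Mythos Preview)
Your proposal has a genuine gap at precisely the point you identify as ``the main obstacle.'' A uniform lower bound $\pr[U_1(m)+m<0]\ge c>0$ is not available: since $|X_m|\le 2Z$ with $Z$ having all moments, the support of $X_m$ below $-m$ carries vanishing mass as $m\to\infty$, and hence so does $\pr[U_1(m)<-m]$. No fixed finite configuration ``near the origin'' can force a crossing when the two starting points are at distance $m\to\infty$, so the forcing-event idea cannot produce an $m$-free lower bound. The alternative you mention---tuning the Skorohod embedding to inflate $\pr[U_1(m)+m<0]$---goes in the wrong direction: you would be manufacturing extra mass for $U_1(m)$ far in the tail, which hurts rather than helps the conditional upper bound you want.

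The paper avoids the denominator entirely. Its point is that the almost-sure bound $-(U_1(m)+m)\le 2Z^2$ uses a random variable $Z$ built only from the auxiliary variables $\xi_j$, $M$, $G$ of Lemma~\ref{tpr} and Proposition~\ref{renewal time proposition}; these are functions of the environment that do \emph{not} depend on the relative positions of $X^{(0,0)}$ and $X^{(m,0)}$, whereas the event $\{U_1(m)+m<0\}$ depends only on those relative positions. Consequently $Z$ is (stochastically) unaffected by conditioning on $\{U_1(m)+m<0\}$, and one obtains directly
\[
\pr\bigl[-(U_1(m)+m)\ge t\,\big|\,U_1(m)+m<0\bigr]\ \le\ \pr\bigl[2Z^2\ge t\bigr],
\]
with $\widehat R=2Z^2$. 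In other words, the correct route is not to lower-bound the conditioning probability but to show that your bounding variable is \emph{independent of} the conditioning event. Your pointwise estimate $-(U_1(m)+m)\le 2Z$ is essentially right; what is missing is this independence (or decoupling) argument, without which the conditional statement cannot be deduced.
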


\begin{proof} Take $T$ as the renewal time from Proposition \ref{renewal time proposition} for the processes $\pi^{(0,0)}$ and $\pi^{(m,0)}$. From the definition of the renewal time $T$, we always have that $|\pi^{(m,0)}(T) - \pi^{(0,0)}(T)| \le 2 T^2$. Indeed both paths will perform at most $T$ jumps during time interval $[0,T]$ and each jumps allows a maximum displacement of $T$, otherwise one of the dependence regions associated to these jumps would intersect the environment above time $T$, which contradicts the definition of $T$.

From the proofs of Lemma \ref{tpr} and Proposition \ref{renewal time proposition}, $T$ is bounded above by a random variable $Z$ which is a function of the random variables $\xi_j$, $M$ and $G$. By construction, these random variables do not depend on the positions of $X^{(m,0)}$ and $X^{(0,0)}$ relatively to each other, although $T$ does depend (for instance $T$ depends on the event $\pi^{(m,0)}$ and $\pi^{(0,0)}$ coalesce before time $T$ while $Z$ doesn't). This latter assertion obviously requires a close inspection of the proofs of Lemma \ref{tpr} and Proposition \ref{renewal time proposition}, but it is nonetheless straightforward.  

The event $\{U_1(m)+m<0\}$ is basically the event: $(\pi^{(m,0)}(T) - \pi^{(0,0)}(T)) \notin [0,m]$. So the event $\{U_1(m)+m<0\}$ depends only on the positions of $X^{(m,0)}$ and $X^{(0,0)}$ relatively to each other. Moreover $U_1(m)+m$ given $\{U_1(m)+m<0\}$ is bounded above by two times the square of the time $X^{(0,0)}$ spends at the right of $X^{(m,0)}$, which is bounded above, independently of the positions of $X^{(m,0)}$ and $X^{(0,0)}$ relatively to each other, by the sum of the random variables $\xi_j$ relative to renewal times
$\tau_j$, $j\ge 1$, for $X^{(0,0)}$ where $\pi^{(m,0)} < \pi^{(0,0)}$ and $X^{(0,0)}_{\tau_j}(2) \le T$. This sum is stochastically bounded above by the random variable $Z$ independently of $\{U_1(m)+m<0\}$ and $U_1(m)+m$ given $\{U_1(m)+m<0\}$ is necessarily stochastically bounded from below by $- 2 Z^2 = - \widehat{R}$. 
\end{proof}

\medskip

For any set $\mathbbm{A} \subset \mathbb{R}$, let us define $\nu^m_{\mathbbm{A}}$ as the first hitting time of $\mathbbm{A}$ of the random walk $\{Y^m_n:n\geq 1\}$; i.e. 
\begin{equation} \label{nuA}
    \nu^m_{\mathbbm{A}}:=\inf\{n\geq 1: Y_n^m\in\mathbbm{A}\}.
\end{equation}
The next lemma states, among other claims, that the probability of ocurrence of $k$ crossings before coalescence of two paths in the GRDF decays exponentially fast in $k$.  

\smallskip

\begin{lemma}\label{a_l} The following properties hold:
\begin{enumerate}
    \item [(i)] For all $m\in\N$ we have $\pr\big[\nu^m_{(-\infty,0]}<\infty\big] = 1$.
    \item [(ii)] $ \inf_{m\geq 1}\esp\big[Y^m_{\nu^m_{(-\infty,0]}}\big]>-\infty$.
    \item [(iii)] $\inf_{m\geq 1}\pr\big[Y^m_{\nu^m_{(-\infty,0]}}=0\big]>0$.
    \item [(iv)] Let us define the sequence $(a_l)_{l\geq 1}$ as
                \begin{equation*}
                a_1:=\inf\{n\geq 1: Y_n^1\leq 0\} 
                \end{equation*}
                and for $l \ge 2$
                \begin{equation*}
                a_l:=
                  \left\lbrace
                  \begin{array}{l}
                     \inf\{n\geq a_{l-1}: Y^1_n\geq 0\}; \text{ if } l \text{ is even} \\
                     \inf\{n\geq a_{l-1}: Y^1_n\leq 0\}; \text{ if } l \text{ is odd} \, . \\
                  \end{array}
                  \right. 
                \end{equation*}
                Then there exists a constant $c_1<1$ such that 
                $$
                \pr[Y^1_{a_j}\neq 0, \text{ for } j=1,\dots,k]\leq c_1^k,
                $$ 
                for all $k\geq 1$.
\end{enumerate}
\end{lemma}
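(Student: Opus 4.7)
The plan is to handle the four items in turn, exploiting the Skorohod representation $Y^m_n\stackrel{d}{=}B(S_n)+m$ from the previous section together with Lemma \ref{crossing lower bound} and the renewal structure of Corollary \ref{renewal time corollary}. Throughout, write $\nu=\nu^m_{(-\infty,0]}$ for brevity.

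For (i) I would argue by martingale convergence. On $\{\nu=\infty\}$ the process $Y^m$ is a strictly positive, hence non-negative, martingale, so it converges almost surely to a finite limit $Y^m_\infty\ge 0$. Because the increments of $Y^m$ are integer valued, a convergent trajectory is eventually constant; but conditionally on $Y^m_{n-1}=r\ge 1$, the increment $Y^m_n-Y^m_{n-1}$ is not almost surely zero (the two i.i.d.\ renewal displacements of $\pi^{(0,0)}$ and $\pi^{(m,0)}$ need not cancel), so a Borel--Cantelli argument based on the renewal cycles of Corollary \ref{renewal time corollary} rules out an infinite tail of vanishing increments. Therefore $\pr(\nu=\infty)=0$.

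For (ii) I would decompose $\esp[Y^m_\nu]=\esp[Y^m_\nu\,\mathbbm{1}_{\{Y^m_\nu<0\}}]$, since the contribution from $\{Y^m_\nu=0\}$ is zero. Conditioning on $Y^m_{\nu-1}=r>0$ and on the crossing event $\{Y^m_\nu<0\}$, the overshoot $Y^m_\nu$ has the conditional distribution of $U_\nu(r)+r$ given $\{U_\nu(r)+r<0\}$, and by Lemma \ref{crossing lower bound} it is stochastically bounded below by $-\widehat{R}$, uniformly in $r$ and hence in $m$. Averaging yields $\esp[Y^m_\nu]\ge -\esp[\widehat{R}]>-\infty$.

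The main obstacle will be (iii). The difficulty is that a naive gambler's-ruin-type estimate gives $\pr(\mbox{hit }\{0\}\mbox{ before }(-\infty,-1])\sim 1/m$, which is not bounded below; so the statement must really be about the atom at zero in the overshoot distribution of a possibly large downward jump. My plan is to combine two ingredients. First, optional stopping applied to the bounded exit time $\nu_N:=\nu^m_{(-\infty,0]\cup[N,\infty)}$, together with (ii), yields $\pr(\nu_N<\nu)\le (m+\esp[\widehat{R}])/N$, controlling how far upwards the walk can escape before first crossing and, after localisation, how likely the state $r=Y^m_{\nu-1}$ just before crossing is to be small. Second, a GRDF-specific analysis of one crossing step from a state $r\ge 1$: one shows that $\pr(Y^m_\nu=0\mid Y^m_{\nu-1}=r,\,Y^m_\nu\le 0)$ is bounded below by a constant independent of $r$, by comparing the atom at $-r$ of the one-step increment distribution to its tail $\pr(\mbox{step}\le -r)$, the comparison being driven by the uniform moment bounds of Corollary \ref{renewal time corollary} and the stochastic domination from Lemma \ref{crossing lower bound}. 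Combining the two ingredients and averaging over $r=Y^m_{\nu-1}$ produces $\pr(Y^m_\nu=0)\ge c>0$ uniformly in $m$.

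Finally, (iv) follows by iterating (iii) via the strong Markov property at each sign-change time $a_l$. On $\{Y^1_{a_l}\ne 0\}$, conditionally on $\mathcal{F}_{a_l}$ the walk restarts from $Y^1_{a_l}\in\Z\setminus\{0\}$; the symmetric statement for negative starting positions follows from (iii) by the reflection symmetry of the GRDF construction. Thus the conditional probability that $Y^1_{a_{l+1}}\ne 0$ is at most $c_1:=1-\inf_{m\ne 0}\pr[Y^m_{\nu^m_{(-\infty,0]}}=0]<1$, and induction yields $\pr[Y^1_{a_j}\ne 0,\,j=1,\dots,k]\le c_1^k$.
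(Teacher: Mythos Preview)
Your treatment of (iv) matches the paper's. The other three items have genuine gaps.

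For (i), your martingale-convergence argument is not valid as stated: conditioning on the tail event $\{\nu=\infty\}$ destroys the martingale property, so one cannot invoke the non-negative martingale convergence theorem there. The paper argues quite differently, entirely inside the Skorohod scheme: it tracks the indices $n$ at which the embedded Brownian motion $B'=B+m$ touches $(-\infty,0]$ on $(S_{n-1},S_n]$ and shows that at each such index either $Y^m$ has already hit $(-\infty,0]$ or the Brownian motion exits through the right endpoint $V_n+B'(S_{n-1})$; the number of the latter ``bounce-backs'' is stochastically dominated by a geometric variable (with parameter the probability that a Brownian motion from $0$ leaves $[-\widehat R,1]$ on the left), whence $\nu<\infty$ a.s.

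For (ii), given $Y^m_{\nu-1}=r$ and $Y^m_\nu<0$, the overshoot is \emph{not} distributed as $U_\nu(r)+r$ conditioned only on $\{U_\nu(r)+r<0\}$: one must also condition on the Brownian motion exiting on the left, and this extra conditioning biases the law of $U_\nu(r)$ in a way Lemma~\ref{crossing lower bound} does not directly control. The paper instead exploits the geometric structure obtained in its proof of (i) to write $Y^m_{\nu}\ge-\sum_{i=1}^{G}\widehat R_i$ with $G$ geometric and the $\widehat R_i$ i.i.d.\ copies of $\widehat R$, which immediately yields the uniform lower bound.

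The most serious problem is (iii). Your first ingredient gives only $\pr(\nu_N<\nu)\le (m+C)/N$, which depends on $m$ and is useless for large $m$. Your second ingredient asks that $\pr(\xi^{(r)}=-r)\ge c\,\pr(\xi^{(r)}\le -r)$ uniformly in $r\ge1$, where $\xi^{(r)}$ is the one-step increment from state $r$; but uniform moment bounds and the stochastic domination of the conditional overshoot from Lemma~\ref{crossing lower bound} give no information about the \emph{atom} at zero of that conditional law, so this comparison is not established by the tools you cite. The paper's route avoids atoms of the increment law altogether: for $m>M$ it lets the walk first hit $(-\infty,M]$; if it lands in $\{1,\dots,M\}$ one restarts and uses the trivially positive $\min_{1\le k\le M}\pr[Y^k_{\nu^k_{(-\infty,0]}}=0]$. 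The complementary event, that the walk jumps over $\{1,\dots,M\}$ directly into $(-\infty,0]$, has probability
\[
\pr\big[Y^{m-M}_{\nu^{m-M}_{(-\infty,0]}}\le -M\big]\;\le\;-\tfrac1M\inf_{l\ge1}\esp\big[Y^l_{\nu^l_{(-\infty,0]}}\big],
\]
which is $<1$ once $M$ is chosen large, by (ii) and Markov's inequality. This yields a uniform lower bound on $\pr[Y^m_{\nu^m_{(-\infty,0]}}=0]$ for all $m>M$, and the finitely many $m\le M$ are handled individually.
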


\begin{proof}
We will use the Skorohod Scheme for the difference process $\{Y^m_n:n\geq 0\}$. Let $(B(s))_{s\ge 0}$ be a Brownian Motion and $(S_n)_{n\ge 1}$ be stopping times given by the Skorohod Scheme.

Let us start proving $(i)$. Fix $m \in \mathbb{N}$ and, to simplify notation, put $B'(s) = B(s) + m$, $s \ge 0$ so $(B'(s))_{s\ge 0}$ is a standard Brownian motion starting at $m$. Define
\begin{align*}
    D:=\big\{n \in [1,\nu^m_{(-\infty,0]}]\cap\N: (B'(s))_{s\geq 0} \text{ visits } (-\infty,0]  \text{ in the interval } (S_{n-1},S_n] \big\} .
\end{align*}
For each $n\in D$, we have two possibilities:
\begin{enumerate}

\item [(a)] $B'(S_n) = U_n(B'(S_{n-1}))+B'(S_{n-1})=0$ thus $n=\nu^m_{(-\infty,0]}$.

\item [(b)] $U_n(B'(S_{n-1}))+B'(S_{n-1})<0$. In this case we have that $(B'(s))_{s\geq 0}$ visits zero in the interval $(S_{n-1},S_n)$, before hitting 
$$\{U_n(B'(S_{n-1}))+B'(S_{n-1}), V_n(B'(S_{n-1}))+B'(S_{n-1})\}$$ 
at time $S_n$. By the Strong Markov property and Lemma \ref{crossing lower bound} we get that the probability that a standard Brownian motion starting at $0$ leaves the interval $[-\widehat{R},1]$ by the left side is a lower bound to the probability that $B'(S_n)=U_n(B'(S_{n-1}))+B'(S_{n-1})$. Note that from the statement and proof of Lemma \ref{crossing lower bound} that $\widehat{R}$ is independent of $(B(s))_{s\geq 0}$. Therefore
$$
 \#\{n\in D: B'(S_n)=V_n(B'(S_{n-1}))+B'(S_{n-1})\}\leq G-1
$$
where $G$ is a geometric random variable whose parameter is the probability that a standard Brownian motion starting at $0$ leaves the interval $[-\widehat{R},1]$ by the left side. Thus $\nu^{m}_{(-\infty,0]}<\infty$ almost surely and $(i)$ holds.
\end{enumerate}

\medskip

Now we prove $(ii)$. From the proof of $(i)$ above, we have a geometric random variable $G$ and i.i.d. random variables $\{\widehat{R}_i:i\geq 1\}$ such that 
$$
Y^m_{\nu^m_{(-\infty,0]}}\geq-\sum_{i=1}^G\widehat{R}_i.
$$
Hence 
$$
\esp\big[Y^m_{\nu^m_{(-\infty,0]}}\big]\geq-\esp\Big[\sum_{i=1}^{G}\widehat{R}_i\Big]
$$
for all $m\geq 1$, and from Lemma \ref{crossing lower bound} we have $(ii)$.
\smallskip

To prove  $(iii)$  consider $m,M\in\N$ with $m>M$.  
Since $\nu^m_{(-\infty,M]}\leq\nu^m_{(-\infty,0]}$, by $(i)$, we get that $\nu_{(-\infty,M]}^m$ is finite almost surely. Also note that
\begin{align*}
\pr\big[Y^m_{\nu^m_{(-\infty,0]}}=0\big]&\geq\pr\big[Y^m_{\nu^m_{(-\infty,0]}}=0,\nu^m_{(-\infty,0]}\neq\nu^m_{(-\infty,M]}\big]\\
&=\sum_{k=1}^{M}\pr\big[Y^m_{\nu^m_{(-\infty,0]}}=0,Y^m_{\nu^m_{(-\infty,M]}}=k\big]\\
&=\sum_{k=1}^{M}\pr\big[Y^m_{\nu^m_{(-\infty,0]}}=0\big| Y^m_{\nu^m_{(-\infty,M]}}=k\big]\pr\big[Y^m_{\nu^m_{(-\infty,M]}}=k\big].
\end{align*}
For all $1\leq k\leq M$ by the Strong Markov  property of $(Y_n^m)_{n\geq 0}$ and the translation invariance of the model, we have that
\begin{align*}
\pr\big[Y^m_{\nu^m_{(-\infty,0]}}=0 \big| Y^m_{\nu^m_{(-\infty,M]}}=k\big]=\pr\big[Y^k_{\nu^k_{(-\infty,0]}}=0\big].
\end{align*}
Hence
\begin{align*}
\pr\big[Y^m_{\nu^m_{(-\infty,0]}}=0 \big] & \geq\sum_{k=1}^{M}\pr\big[Y^k_{\nu^k_{(-\infty,0]}}=0\big]\pr\big[Y^m_{\nu^m_{(-\infty,M]}}=k\big]\\
&\geq\Big(\min_{1\leq k\leq M}\pr[Y^k_{\nu^k_{(-\infty,0]}}=0]\Big)\sum_{k=1}^{M}\pr\big[Y^m_{\nu^m_{(-\infty,M]}}=k\big]\\&=\Big(\min_{1\leq k\leq M}\pr\big[Y^k_{\nu^k_{(-\infty,0]}}=0\big]\Big)\pr\big[\nu^m_{(-\infty,0]}\neq\nu^m_{(-\infty,M]}\big]\\&\geq \Big(\min_{1\leq k\leq M}\pr\big[Y^k_{\nu^k_{(-\infty,0]}}=0\big]\Big)\Big(\inf_{\tilde{m}>M}\pr\big[\nu^{\tilde{m}}_{(-\infty,0]}\neq\nu^{\tilde{m}}_{(-\infty,M]}\big]\Big) .
\end{align*}
From the description of the GRDF it is straightforward to verify that $\pr[Y^k_{\nu^k_{(-\infty,0]}}=0]>0$ for all $k\geq 1$, indeed $\{Y^k_{\nu^k_{(-\infty,0]}}=0\}$ contains an event that can be specified by a configuration for the environment on a finite number of points. Then $\min_{1\leq k\leq M}\pr[Y^k_{\nu^k_{(-\infty,0]}}=0]>0$. Let us prove that for an adequate $M$ we have 
$$
\inf_{\tilde{m}>M}\pr[\nu^{\tilde{m}}_{(-\infty,0]}\neq\nu^{\tilde{m}}_{(-\infty,M]}]>0.
$$

Note that $\pr[\nu^{\tilde{m}}_{(-\infty,0]}=\nu^{\tilde{m}}_{(-\infty,M]}]=\pr[Y^{\tilde{m}}_{\nu^{\tilde{m}}_{(-\infty,M]}}\leq 0]$. By symmetry and translation invariance, we have 
\begin{align*}
    \pr\big[Y^{\tilde{m}}_{\nu^{\tilde{m}}_{(-\infty,M]}}\leq 0\big]&=\pr\big[Y^{(\tilde{m}-M)}_{\nu^{(\tilde{m}-M)}_{(-\infty,0]}}\leq- M\big]\leq\frac{1}{M}\big(-\esp\Big[Y^{(\tilde{m}-M)}_{\nu^{(\tilde{m}-M)}_{(-\infty,0]}}\Big]\big)
    \leq-\frac{1}{M}\inf_{m\geq 1}\esp\big[Y^m_{\nu^m_{(-\infty,0]}}\big]   
\end{align*}

By $(ii)$ we have that $\inf_{m\geq 1}\esp\big[Y^m_{\nu^m_{(-\infty,0]}}\big]>-\infty$, hence taking $M$ such that 
$$
c:=-\frac{1}{M}\inf_{m \ge 1}\esp\big[Y^m_{v^m_{(-\infty,0]}}\big]<1,
$$ 
we get
\begin{align*}
\inf_{\tilde{m}>M}\pr[Y^{\tilde{m}}_{\nu^{\tilde{m}}_{(-\infty,0]}}=0]\geq\Big(\min_{1\leq k\leq M}\pr[Y^k_{\nu^k_{(-\infty,0]}}=0]\Big)(1-c)>0,
\end{align*}
which completes the proof of (iii).

\medskip

Le us prove (iv). Define
\begin{align*}
c_1:= \sup_{m\geq 1}\pr[Y^m_{a_1}\neq 0].
\end{align*} 
By (iii) we get that $\pr[Y^1_{a_1}\neq 0]\leq c_1 < 1$. The proof will follow by induction on $k$. Suppose that $\pr[Y^1_{a_j}\neq 0, \text{ for } j=1,\dots,k]\leq c_1^{k}$. Here we are going to assume that $k$ is even, the case when $k$ is odd is similar. Write
\begin{align*}
&\pr[Y^1_{a_j}\neq 0 \text{ for } j=1,\dots,k+1] \\&=\sum_{m\geq 1}\pr[Y^1_{a_{k+1}}\neq 0,Y^1_{a_k}=m,Y^1_{a_j}\neq 0\text{ for } j=1,\dots,k-1]\\&=\sum_{m\geq 1}\pr[Y^1_{a_{k+1}}\neq 0| Y^1_{a_k}=m, \cap_{j=1}^{k-1}\{ Y^1_{a_j}\neq 0\}]\pr[Y^1_{a_k}=m,\cap_{j=1}^{k-1} \{Y^1_{a_j}\neq 0\}].
\end{align*}
By the Strong Markov property of $(Y_n^1)_{n\geq 0}$ and translation invariance, we have that
\begin{align*}
\pr[Y^1_{a_{k+1}}\neq 0|Y^1_{a_k}=m,Y^1_{a_j}\neq 0\text{ for } j=1,\dots,k-1]=\pr[Y^m_{a_1}\neq 0]\leq c_1.
\end{align*}
Hence
\begin{align*}
\pr[Y^1_{a_j}\neq 0 \text{ for } j=1,\dots,k+1]&\leq c_1\sum_{m\geq 1}\pr[Y^1_{a_k}=m,Y^1_{a_j}\neq 0\text{ for } j=1,\dots,k-1]\\&=c_1\pr[Y^1_{a_j}\neq 0 \text{ for } j=1,\dots,k]\\
&\leq c_1^{k+1}.
\end{align*}
\end{proof}

\medskip

\begin{lemma}\label{R_j}  
Fix $m=1$ and consider the sequence $(a_l)_{l\geq 1}$ as in the statement of Lemma \ref{a_l}.  Then there exist a standard Brownian motion $\{\mathbb{B}(s):s\geq 0\}$ independent of $Y^1$, an integrable random variable $R_0$ and  a sequence of independent random variables $\{R_i:i\geq 1\}$ with values on the non negative integers, so that $\{\mathbb{B}(s): s\geq 0\}$, $\{R_i:i\geq 1\}$ and $R_0$ are independent and satisfy: 
\begin{enumerate}
\item [(i)] $R_i|\{R_i\neq 0\}\overset{d}{=} R_0$ for all $i\geq 1$.
\item[(ii)] $S_{a_l}$ is stochastically dominated by $J_l$, which is defined as $J_0 = 0$,
\begin{align*}
J_1:=\inf\{s\geq 0: \mathbb{B}(s) - \mathbb{B}(0) = -(R_1 + R_0)\},
\end{align*}
and
\begin{align*}
J_{l}:=\inf\{s\geq J_{l-1}: \mathbb{B}(s)-\mathbb{B}(J_{l-1})=(-1)^l(R_l+R_{l-1})\}, \text{ for } l\geq 2.
\end{align*}
\item[(iii)] $Y^1_{a_l} \neq 0$ implies that $\mathbb{B}(J_l)\neq 0$, which is equivalent to $R_l \neq 0$, given that $\mathbb{B}(0) = R_0$.
\end{enumerate}
\end{lemma}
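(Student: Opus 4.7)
The plan is to construct $\mathbb B$, $R_0$ and $\{R_i:i\ge 1\}$ by enlarging the probability space carrying the Skorohod representation of $Y^1$ from the preceding discussion, and then prove the stochastic domination in (ii) inductively via the Strong Markov property of Brownian motion, using the uniform overshoot bound of Lemma \ref{crossing lower bound} as the key analytic input.

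Working under the Skorohod coupling $Y^1_n=B(S_n)+1$ of the preceding section, I would enlarge the probability space to carry a standard Brownian motion $\mathbb B$ and an i.i.d.\ sequence $(\hat R_l)_{l\ge 0}$ distributed as $\hat R\vee 1$ (with $\hat R$ from Lemma \ref{crossing lower bound}; the $\vee 1$ costs nothing since $\hat R$ has all moments), jointly independent of the Skorohod scheme for $Y^1$. Put $R_0:=\hat R_0$. For $l\ge 1$, set $R_l:=0$ on $\{Y^1_{a_l}=0\}$; on the complementary event, use a Strassen/quantile coupling between $\hat R_l$ and the conditional law of $|Y^1_{a_l}|$ given the past (which, by translation invariance of $Y^1$, the Strong Markov property applied at $a_{l-1}$, and Lemma \ref{crossing lower bound}, is stochastically dominated by $\hat R$) to produce $R_l\ge |Y^1_{a_l}|$ a.s.\ with the conditional law $R_l\mid\{R_l\neq 0\}\stackrel{d}{=}R_0$. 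Independence of the $R_l$'s is inherited from the independence of the $\hat R_l$'s. This yields (i); since by construction of $J_l$ one has $\mathbb B(J_l)=(-1)^l R_l$, property (iii) reduces to $Y^1_{a_l}\neq 0\Rightarrow R_l\neq 0$, which is immediate from the coupling.

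For the stochastic domination in (ii), I would argue by induction on $l$ using the Strong Markov property of $B$ at $S_{a_{l-1}}$. Thanks to translation invariance of $Y^1$, it suffices to show that, conditionally on $\mathcal F_{S_{a_{l-1}}}$ and on $(R_k)_{k\le l}$, the increment $S_{a_l}-S_{a_{l-1}}$ is stochastically dominated by the first-passage time of a fresh Brownian motion started at $0$ to the level $(-1)^l(R_l+R_{l-1})$. The core observation is that between the sign-change times $a_{l-1}$ and $a_l$ the walk stays on one side of zero, and at step $a_l$ it crosses to the opposite side; by Lemma \ref{crossing lower bound}, the Skorohod interval at this crossing step has its ``crossing-side'' endpoint at distance at most $|Y^1_{a_{l-1}}|+|Y^1_{a_l}|\le R_{l-1}+R_l$ from $B(S_{a_{l-1}})$. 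Combining this with the fact that $B$ must first pass through the translated zero line before exiting any Skorohod interval on the far side, one gets that $S_{a_l}-S_{a_{l-1}}$ is dominated by the time for $B(\cdot+S_{a_{l-1}})-B(S_{a_{l-1}})$ to first attain $(-1)^l(R_l+R_{l-1})$. Since $B\stackrel{d}{=}\mathbb B$ and $\mathbb B$ is independent of the $R_j$'s, this dominating increment has the same law as $J_l-J_{l-1}$, which delivers $S_{a_l}\le_{\mathrm{st}}J_l$ after summing.

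The hardest part will be the increment bound $S_{a_l}-S_{a_{l-1}}\le_{\mathrm{st}}J_l-J_{l-1}$. Between two consecutive sign-change times, the walk may make arbitrarily many Skorohod steps on the same side of zero, and a naive estimate would accumulate contributions from each of them. The correct argument requires decomposing the Brownian trajectory on $[S_{a_{l-1}},S_{a_l}]$ into (i) the portion up to the first crossing of the translated zero line and (ii) a final ``crossing step'' of Skorohod displacement at most $R_l+R_{l-1}$ on the far side. Both portions have to be controlled simultaneously, and one then needs a monotonicity/comparison argument showing that this combined time is no larger than a single Brownian first-passage time over the distance $R_l+R_{l-1}$ --- precisely the regime where Lemma \ref{crossing lower bound} does the work.
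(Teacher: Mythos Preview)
Your proposal has a genuine gap in the proof of (ii). You define $R_l$ so that $R_l\ge|Y^1_{a_l}|$ via a quantile coupling with a single copy of $\hat R$, and then claim that $S_{a_l}-S_{a_{l-1}}$ is bounded by the first-passage time of $B(\cdot+S_{a_{l-1}})-B(S_{a_{l-1}})$ to level $(-1)^l(R_l+R_{l-1})$. This is false in general. Between $a_{l-1}$ and $a_l$ there may be several ``failed'' crossing attempts: Skorohod steps $n$ with $U_n(Y^1_{n-1})+Y^1_{n-1}<0$ at which the Brownian motion $B'=B+1$ exits by $V$ and the walk stays positive. During each such step the lower barrier $U_n+Y^1_{n-1}$ can be arbitrarily far below zero, so $B'$ can dip well below the level $Y^1_{a_{l-1}}-(R_{l-1}+R_l)$ long before $S_{a_l}$. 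Concretely, if $Y^1_{a_{l-1}}=1$, the first step has $(U,V)=(-6,100)$ with exit by $V$ (so $Y^1=101$), and the second step crosses to $Y^1_{a_l}=-1$, then $R_{l-1}=R_l=1$ suffice for your coupling, yet $B'$ may have visited $-4$ during the first step; the first-passage of $B'-1$ to $-2$ then occurs strictly inside step~1, whereas $S_{a_l}-S_{a_{l-1}}$ also includes all of step~2. So the claimed pathwise bound, and hence the stochastic domination, breaks down.

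What is actually needed is that $R_l$ dominate the \emph{running minimum} of $B'$ on $(S_{a_{l-1}},S_{a_l}]$, not merely the endpoint $|Y^1_{a_l}|$. This is exactly why the paper sets $R_l=\sum_{i=1}^{G}\hat R_i$, where $G$ is the geometric number of Skorohod intervals in which $B'$ enters $(-\infty,0]$; the sum controls all the negative lower barriers encountered, so that $\inf_{S_{a_{l-1}}\le s\le S_{a_l}}B'(s)\ge -R_l$. The paper then builds $\mathbb B$ by literally following $B$ (shifted by $R_0$) on $[S_{a_{l-1}},S_{a_l}]$ and appending an independent Brownian piece afterwards, which yields $J_l\ge S_{a_l}$ \emph{almost surely} by construction rather than just in distribution. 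Your decomposition into ``portion up to the first zero-crossing'' plus ``a final crossing step'' does not capture the multiple failed crossings, and no monotonicity argument can repair this once $R_l$ is tied only to $|Y^1_{a_l}|$.
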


\medskip

\begin{proof} Recall the Skorohod Scheme for the difference process $\{Y^1_n:n\geq 1\}$ as well as the definitions of the geometric random variable $G$ and the i.i.d. random variables $\{\widehat{R}_i:i\geq 1\}$ in the proof of Lemma $\ref{a_l}$. Also as in the proof of Lemma $\ref{a_l}$ consider the following random set
\begin{align*}
    D:=\Big\{ n\in[1,\nu^1_{(-\infty,0]}]\cap\N: (B(s)+1)_{s\geq 0} \text{ visits } (-\infty,0] \text{ in the interval } (S_{n-1},S_n]\Big\}.
\end{align*}
By the proof of Lemma \ref{a_l} we have that 
\begin{align*}
    -\sum_{i=1}^{G}\widehat{R}_i \leq -\sum_{i=1}^{|D|}\widehat{R}_i \leq \inf_{0\leq s\leq \nu^1_{(-\infty,0]}} B(s) \, .
\end{align*}
Let us define the random variable $R_1$ as
\begin{equation*}
                R_1:=
                  \left\lbrace
                  \begin{array}{cl}
                      - \sum_{i=1}^{G}\widehat{R}_i & , \text{ if } |D|>1 \textrm{ or } |D| =1 \textrm{ and } U_n(B(S_{n-1})+1)+B(S_{n-1})+1 < 0 \textrm{ for } n \in D,  \\
                     0 & ,  \text{ if } |D| =1 \textrm{ and } U_n(B(S_{n-1})+1)+B(S_{n-1})+1 = 0 \textrm{ for } n \in D,
                  \end{array}
                  \right. 
\end{equation*}
and $R_0$ as an independent random variable such that $R_0\overset{d}{=}R_1|\{R_1\neq 0\}$. Now we need to consider standard Brownian Motions starting at $0$ $(\tilde{B}^j(s))_{s \ge 0}$, $j\ge 1$, independent of $(B(s))_{s \ge 0}$ and $\big\{(U_i(l),V_i(l)): l\in\Z, l\neq 0, i\geq 1\big\}$. Define 
\begin{align*}
    J_1:=\inf \{s\geq S_{a_1}: B(S_{a_1}) + \tilde{B}^1(s-S_{a_1}) = -(R_1+R_0)\}  
\end{align*}
which is above $S_{a_1}$ by definition. Define $(\mathbb{B}(s))_{0\le s \le J_1}$ as $\mathbb{B}(s) = B(s) + R_0$ for $0\le s \le S_{a_1}$ and $\mathbb{B}(s) = B(S_{a_1}) + \tilde{B}^1(s-S_{a_1}) + R_0$ for $S_{a_1}\le s \le J_1$.  Note that $Y^1_{a_1} = B(S_{a_1}) + 1 \neq 0$ implies that $R_1>0$, then $\mathbb{B}(J_1) = - R_1 < 0$. Moreover $J_1$ has the same distribution of $\nu^1_{(-\infty,-(R_1+R_0)]}$.

\medskip

From this point, it is straightforward to use an induction argument to build the sequence $\{R_j:j\ge 1\}$. At step $j$ in the induction argument, we consider initially an excursion of $(B(s))_{s\ge 0}$ in a time interval of size $(S_{a_{j}}-S_{a_{j-1}})$, and since $|B(S_{a_{j-1}})| \overset{st}{\leq} R_{j-1}$ we can obtain $R_j$ and define $J_j$ using $(\mathbb{B}(s))_{s\ge 0}$ as before. By the strong Markov property of $\{B(S_n): n\geq 1\}$, we obtain that the $R_j$'s are independent and $B(S_{a_j})\neq 0$ is equivalent to $\mathbb{B}(J_j)\neq 0$. Define $(\mathbb{B}(s))_{J_{l-1} \le s \le J_l}$ as $\mathbb{B}(s) = B(s) - B(S_{a_{l-1}}) + \mathbb{B}(J_{l-1})$ for $J_{l-1} \le s \le J_{l-1} + (S_{a_l} - S_{a_{l-1}})$ and $\mathbb{B}(s) = \tilde{B}^l \big(s - J_{l-1} - (S_{a_l} - S_{a_{l-1}})\big) + B(S_{a_l}) - B(S_{a_{l-1}}) + \mathbb{B}(J_{l-1})$ for $ J_{l-1} + (S_{a_l} - S_{a_{l-1}}) \le s \le J_l$. Note that $Y^1_{a_l} = B(S_{a_l}) + 1 \neq 0$ implies that $R_l > 0$, then $\mathbb{B}(J_l) = - R_l < 0$.
\end{proof}

\medskip

\section{Coalescing Time.}\label{coalescing time}

In this section we obtain an upper bound on the tail probability of the coalescence time of two paths in $\mathcal{X}$. This is a central estimate related to convergence to the Brownian web. The main ideas used here to get the bound come from \cite{coletti2009scaling},\cite{coletti2014convergence} and \cite{roy2013random}, although it is not a straightforward application of the techniques used before. Here we have another important difference with the Random Directed Forest studied in \cite{roy2013random}, because of the possibility of crossings before coalescence. This property does not allow us to adapt the proof given in \cite{roy2013random}. We will need the ideas used in \cite{coletti2014convergence}, where the authors work with a system allowing crossing to obtain the upper bound. 

\medskip

The aim of this section is to prove the following result.

\begin{proposition}\label{tau(u,v)}
Define $\vartheta := \inf\{t\geq 0:\pi^{(0,0)}(s)=\pi^{(1,0)}(s)\text{ for all }s\geq
 t\}$. Then there exists a positive constant $C$ such that
\begin{align*}
\pr[\vartheta >k]\leq\frac{C}{\sqrt{k}},
\end{align*}
for all $k\geq 1$.
\end{proposition}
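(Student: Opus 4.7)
The plan is to bound $\vartheta$ by the real time corresponding to the first renewal step at which the difference process $(Y^1_n)_{n\geq 0}$ returns to $0$, and then control that real time using the Skorohod scheme of Section \ref{sec:skorohod} together with classical Brownian hitting-time asymptotics. Let $(a_l)_{l\geq 1}$ be the crossing sequence of Lemma \ref{a_l} and set $N:=\inf\{l\geq 1:Y^1_{a_l}=0\}$, $\tau^*:=a_N$. At real time $T_{\tau^*}$ the two paths occupy the same site and, by Corollary \ref{renewal time corollary}, the environment above time $T_{\tau^*}$ is independent of the past and identical for both paths; hence they follow the same trajectory from then on, and $\vartheta\leq T_{\tau^*}$. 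Moreover, Lemma \ref{a_l}(iv) yields $\pr[N>k]\leq c_1^k$, so $N$ has finite moments of every order, which will absorb polynomial factors arising from the sum structure below.

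\textbf{Brownian domination.} Skorohod's scheme gives $Y^1_n\stackrel{d}{=}B(S_n)+1$, and Lemma \ref{R_j} furnishes a Brownian motion $\mathbb{B}$ and stopping times $(J_l)$ such that $S_{a_l}\leq_{\mathrm{st}}J_l$ and $\{Y^1_{a_l}\neq 0\}\subset\{\mathbb{B}(J_l)\neq 0\}$. Each $J_l-J_{l-1}$ is the one-sided hitting time of level $(-1)^l(R_l+R_{l-1})$ by a Brownian motion starting at $0$, where the $R_i$'s are independent and have finite moments of every order (Lemma \ref{crossing lower bound}). The reflection principle gives $\pr[\sigma_a>s]\leq a/\sqrt{s}$ for the hitting time $\sigma_a$ of $a>0$ by a standard Brownian motion from $0$, so, conditioning on $R_l,R_{l-1}$ and then splitting $t$ evenly among $n$ summands,
\begin{equation*}
\pr[J_l-J_{l-1}>s\mid R_l,R_{l-1}]\;\leq\;\frac{R_l+R_{l-1}}{\sqrt{s}}, \qquad \pr[J_n>t]\;\leq\;\sum_{l=1}^{n}\pr\!\left[J_l-J_{l-1}>\tfrac{t}{n}\right]\;\leq\;\frac{C\,n^{3/2}}{\sqrt{t}}.
\end{equation*}
Summing against $\pr[N=n]$ with $\esp[N^{3/2}]<\infty$ gives $\pr[S_{a_N}>t]\leq\pr[J_N>t]\leq C/\sqrt{t}$.

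\textbf{From Brownian time to real time.} Since $\esp[S_k-S_{k-1}\mid\mathcal{F}_{k-1}]=\esp[(Y^1_k-Y^1_{k-1})^2\mid\mathcal{F}_{k-1}]$, a uniform lower bound $\sigma_0^2>0$ on this conditional variance before coalescence combined with a Chernoff-type argument yields $\pr[S_n\leq\sigma_0^2 n/2,\,a_N>n]\leq e^{-\alpha n}$, and together with the previous paragraph,
\begin{equation*}
\pr[a_N>n]\;\leq\;\pr\!\left[S_{a_N}>\tfrac{\sigma_0^2 n}{2}\right]+\pr\!\left[S_n\leq\tfrac{\sigma_0^2 n}{2},\,a_N>n\right]\;\leq\;\frac{C}{\sqrt{n}}+e^{-\alpha n}.
\end{equation*}
By Corollary \ref{renewal time corollary} the increments $T_k-T_{k-1}$ are i.i.d.\ and, being dominated by geometric sums of bounded variables in the construction of Lemma \ref{tpr}, have exponential moments near $0$. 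With $\mu:=\esp[T_1-T_0]$, Chernoff gives $\pr[T_n>2\mu n]\leq e^{-cn}$, and since $T_n$ is non-decreasing,
\begin{equation*}
\pr[T_{\tau^*}>k]\;\leq\;\pr\!\left[\tau^*>\tfrac{k}{2\mu}\right]+\pr\!\left[T_{\lceil k/(2\mu)\rceil}>k\right]\;\leq\;\frac{C}{\sqrt{k}}+e^{-ck/(2\mu)},
\end{equation*}
which, together with $\vartheta\leq T_{\tau^*}$, yields $\pr[\vartheta>k]\leq C/\sqrt{k}$.

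\textbf{Main obstacle.} The most delicate step is the uniform lower bound on the conditional variance of $Y^1_k-Y^1_{k-1}$ before coalescence. Because the GRDF allows crossings and a path can enter previously observed regions of the environment, the law of $Y^1_k-Y^1_{k-1}$ depends non-trivially on the current relative position of the two paths and on the past; ruling out degenerate configurations (other than $Y^1_{k-1}=0$) in which the variance could collapse requires a careful use of the regeneration structure of Section \ref{sec:renewal} and of positive-probability events where the paths make genuinely independent moves before re-meeting. This is precisely the reason for adopting the Coletti--Fontes--Dias approach of \cite{coletti2014convergence} rather than the cleaner non-crossing scheme of \cite{roy2013random}.
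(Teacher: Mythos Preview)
Your approach is essentially the paper's, reorganized: both rely on the Skorohod scheme, the crossing sequence $(a_l)$, the Brownian domination of Lemma~\ref{R_j}, and the passage from renewal steps to real time. The paper splits $\pr[\widehat\nu>k]$ directly into $\{S_k\le\delta k\}$ and $\{S_k>\delta k\}$, bounds the first piece by the Laplace-transform argument of Claims~\ref{sup E[eQ(m)]} and~\ref{sup p[U(m),V(m)] = (0,0)}, and the second by the crossing decomposition; you instead first bound $\pr[S_{a_N}>t]$ and then recover $\pr[\nu>n]$ from it. That reorganization is fine.

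There are, however, two technical gaps. First, the step $\pr[S_n\le\sigma_0^2 n/2,\ a_N>n]\le e^{-\alpha n}$ does \emph{not} follow from a uniform lower bound on the conditional variance of $Y^1_k-Y^1_{k-1}$. A lower bound on $\esp[Q_k\mid\mathcal F_{k-1}]$ is compatible with $\esp[e^{-\lambda Q_k}\mid\mathcal F_{k-1}]$ being arbitrarily close to $1$ (take $Q_k$ equal to a large value with small probability), so no Chernoff bound follows. What is actually needed---and what the paper proves---is the uniform bound $\sup_{m\neq 0}\esp[e^{-\lambda Q(m)}]<1$, which comes from $\sup_{m\neq 0}\pr[(U(m),V(m))=(0,0)]<1$ together with $Q(m)\ge Q_{-1,1}$ on the complement (Claim~\ref{sup p[U(m),V(m)] = (0,0)}). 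Your ``main obstacle'' paragraph gestures at the right issue but frames it in terms of variance; the correct object is this uniform Laplace bound, and it is precisely Claim~\ref{sup p[U(m),V(m)] = (0,0)} that supplies it.

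Second, the assertion that $T_k-T_{k-1}$ has exponential moments because it is ``dominated by geometric sums of bounded variables'' is unjustified: in the construction of Lemma~\ref{tpr} the $\xi_i$ are negative binomial (not bounded), and their independence from the geometric count $M$ is not established. The paper avoids this entirely: once $\pr[\nu>k]\le C/\sqrt k$ is in hand, it bounds $\pr[T_{\lfloor Lk\rfloor}>k]$ by Chebyshev using only $\var[T_1]<\infty$, which already gives decay of order $1/k$. Replace your Chernoff step for the $T$-increments by this Chebyshev argument and the proof closes.
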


\medskip

Put $\nu^m:=\nu^m_{\{0\}}$, for $m\in\Z$, see \eqref{nuA}, and also write $\nu := \nu^1$. So $\nu$ is the number of renewals associated to $\pi^{(0,0)}$ and $\pi^{(1,0)}$ required for coalescence between these two paths on a common renewal time, which in this case is $T_{\nu}$, where $(T_n)_{n\geq 1}$ are the renewal times defined in the statement of Corollary \ref{renewal time corollary} for the points $(0,0)$ and $(1,0)$. Therefore we have that $\vartheta \le T_{\nu}$ and Proposition \ref{tau(u,v)} follows directly from the next lemma.

\smallskip

\begin{lemma}\label{tau Y} 
There are positive constants $C_{1}$ and $C_{2}$ such that 
\begin{align}\label{pro tau > t}
\pr[\nu>k]\leq\frac{C_1}{\sqrt{k}}
\end{align}
and
\begin{align}\label{pro tau > t2}
\pr[T_{\nu}>k]\leq\frac{C_2}{\sqrt{k}} 
\end{align}
for every $k\ge 1$, where $(T_n)_{n\geq 1}$ are the renewal times defined in the statement of Corollary \ref{renewal time corollary} for the points $(0,0)$ and $(1,0)$.
\end{lemma}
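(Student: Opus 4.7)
First prove (\ref{pro tau > t}), then (\ref{pro tau > t2}) using the i.i.d.\ structure of the renewal increments $T_j - T_{j-1}$. Throughout I rely on the coupling constructed in the proof of Lemma \ref{R_j}, under which $S_{a_l}\le J_l$ pathwise, together with the fact that $\nu=a_L$ where $L:=\inf\{l\ge 1:R_l=0\}$, which has geometric tail by Lemmas \ref{a_l}(iv) and \ref{R_j}(iii).

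For (\ref{pro tau > t}): by monotonicity of $(S_n)$, on $\{\nu>k\}$ we have $S_k\le S_\nu=S_{a_L}\le J_L$, hence $\pr[\nu>k]\le\pr[S_k\le ck]+\pr[J_L\ge ck]$ for a suitable constant $c>0$. The first term I control through the Brownian-stopping identity $\esp[S_j-S_{j-1}\mid\mathcal{F}_{j-1}]=\esp[(Y^1_j-Y^1_{j-1})^2\mid\mathcal{F}_{j-1}]$: at any nonzero separation the two GRDF paths have a uniform positive lower bound $\sigma_{\min}^2$ on the variance of their relative displacement per renewal block, so Chebyshev on the compensator-corrected martingale yields $\pr[S_k\le\sigma_{\min}^2 k/2,\,\nu>k]\le C/k$. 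The second term uses the Brownian hitting-time tail $\pr[\tau_d>t]\le d\sqrt{2/(\pi t)}$ applied to each excursion $J_l-J_{l-1}$ (of magnitude $R_l+R_{l-1}$) together with the moment bounds $\esp[R_l^p]<\infty$ from Lemma \ref{R_j}: summing $\pr[L=l]\cdot l^{3/2}\cdot C\,\esp[R_0+R_1]/\sqrt{t}$ over $l$, the geometric decay $\pr[L=l]\le c_1^{l-1}$ absorbs the polynomial-in-$l$ factor and yields $\pr[J_L>ck]\le C/\sqrt{k}$. Putting the two bounds together proves (\ref{pro tau > t}).

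For (\ref{pro tau > t2}): by Corollary \ref{renewal time corollary}(iv), $(T_j-T_{j-1})_{j\ge 1}$ are i.i.d.\ with finite mean $\mu:=\esp[T_1]>0$ and finite variance. The split
\[
\pr[T_\nu>k]\le\pr\!\Big[\nu>\tfrac{k}{2\mu}\Big]+\pr\!\Big[\sum_{j=1}^{\lfloor k/(2\mu)\rfloor}(T_j-T_{j-1})>k\Big]
\]
bounds the first summand by $C/\sqrt{k}$ via (\ref{pro tau > t}) (applied with $k$ replaced by $k/(2\mu)$) and the second by $C'/k$ via Chebyshev on the centred sum. The main obstacle is the log-free bound $\pr[J_L>ck]\le C/\sqrt{k}$: a naive union bound truncating $L$ at a deterministic $M$ yields a parasitic $(\log k)^{3/2}$ factor, and the clean argument requires exchanging sum and expectation to play the Brownian hitting-time tail $1/\sqrt{t}$ against the geometric decay of $\pr[L=l]$; a secondary technical point is the uniform positive lower bound $\sigma_{\min}^2$ on the conditional variance of the difference walk, which is a model-specific auxiliary claim to be verified from the GRDF jump mechanism (reducing to the fact that at any strictly positive separation each path makes a nontrivial jump with probability bounded below uniformly).
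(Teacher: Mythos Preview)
Your overall strategy matches the paper's closely: the deduction of (\ref{pro tau > t2}) from (\ref{pro tau > t}) is identical, and for (\ref{pro tau > t}) you use the same two-term split $\pr[\nu>k]\le\pr[S_k\le ck,\,\nu>k]+\pr[S_k>ck,\,\nu>k]$. For the first term you invoke a second-moment Chebyshev on the compensated sum (yielding $C/k$) in place of the paper's exponential Markov bound on $e^{-\lambda S_k}$ (Claims~\ref{sup E[eQ(m)]} and~\ref{sup p[U(m),V(m)] = (0,0)}, yielding geometric decay); both suffice, though your route implicitly needs $\sup_{m\ne 0}\esp[Q(m)^2]<\infty$, which you should justify from the uniform fourth-moment bound on the increments of $Y^m$ (via $Z$ in Corollary~\ref{renewal time corollary}) together with the standard moment inequality for Skorohod embedding times.

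There is a genuine slip in your second-term argument. You set $L:=\inf\{l\ge 1:R_l=0\}$, claim $\nu=a_L$, and deduce the geometric tail of $L$ from Lemmas~\ref{a_l}(iv) and~\ref{R_j}(iii). But \ref{R_j}(iii) reads $Y^1_{a_l}\ne 0\Rightarrow R_l\ne 0$, so $\{R_l\ne 0,\,l\le k\}\supset\{Y^1_{a_l}\ne 0,\,l\le k\}$: the implication runs the wrong way to bound $\pr[L>k]$ via \ref{a_l}(iv). What is actually true is $\nu=a_{L'}$ with $L':=\inf\{l:Y^1_{a_l}=0\}$, and it is $L'$ that has geometric tail directly from \ref{a_l}(iv); since $S_{a_l}\le J_l$ pathwise under the coupling of Lemma~\ref{R_j} and $L'\le L$, one still gets $S_k\le S_{a_{L'}}\le J_{L'}$ on $\{\nu>k\}$, and your summation argument goes through with $L'$ in place of $L$. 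The paper avoids this bookkeeping altogether by decomposing $\{\widehat\nu>k,\,S_k>\delta k\}$ over the index $l$ with $S_{a_{l-1}}<\delta k\le S_{a_l}$, noting that this forces $Y^1_{a_j}\ne 0$ for $j\le l-1$ (hence the factor $c_1^{l-1}$ from \ref{a_l}(iv)), and then bounding $\pr[S_{a_l}\ge\delta k\mid\cdot]\le\pr[J_l\ge\delta k]$ via the pathwise coupling.
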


\medskip

\begin{proof} Let us suppose that \eqref{pro tau > t} is true and use it to prove \eqref{pro tau > t2} with the same idea used in \cite{roy2013random}. Recall from  Corollary \ref{renewal time corollary} that $T_1$ has finite moments and define the constant $L:= 1/ 2 \esp[T_1]$. Then for $k\in\N$ 
\begin{align*}
\pr[T_{\nu}>k]&\leq\pr[T_{\nu}>k,\nu \leq Lk]+\pr[\nu >L k]\leq\pr[T_{\lfloor L k\rfloor}>k]+\pr[\nu >Lk].
\end{align*}
By $(\ref{pro tau > t})$, it is enough to prove that $\pr[T_{\lfloor L k\rfloor}>k]\leq\frac{C_3}{\sqrt{k}}$ for some constant $C_3$. Then
\begin{align*}
\pr\big[T_{\lfloor L k\rfloor}>k\big]&=\pr\Big[\sum_{i=1}^{\lfloor L k\rfloor}[T_i-T_{i-1}]>k\Big] \\
&=\pr\Big[\sum_{i=1}^{\lfloor L k\rfloor}[T_i-T_{i-1}]-{\lfloor L k\rfloor}\esp[
T_1]>k-\lfloor L k\rfloor \esp[T_1]\Big]\\
&\leq\frac{\var\Big[\sum_{i=1}^{\lfloor L k\rfloor}(T_i-T_{i-1})\Big]}{\Big(k-\lfloor L k\rfloor \esp[T_1]\Big)^2} =\frac{\lfloor L k\rfloor\var[T_1]}{\Big(k-\lfloor L k\rfloor \esp[T_1]\Big)^2}.
\end{align*}
Note that 
\begin{align*}
\sqrt{k}\frac{\lfloor L k\rfloor\var[T_1]}{\Big(k-\lfloor L k\rfloor \esp[T_1]\Big)^2}\rightarrow 0 \text{ as } k\rightarrow 0.
\end{align*}
Then there exists $M$ such that
\begin{align*}
\frac{\lfloor L k\rfloor\var[T_1]}{\Big(k-\lfloor L k\rfloor \esp[T_1]\Big)^2}\leq\frac{1}{\sqrt{k}}
\end{align*}
for all $k\geq M.$ Hence we can find a sufficiently large constant $C_3 > 0$ such that 
\begin{align*}
\frac{\lfloor L k\rfloor\var[T_1]}{\Big(k-\lfloor L k\rfloor \esp[T_1]\Big)^2}\leq\frac{C_3}{\sqrt{k}}
\end{align*}
for all $k\geq 1$. So we have  \eqref{pro tau > t2}. 

\medskip

To  prove (\ref{pro tau > t}) take $\{B(s):s\geq 0\}$ and $\{S_i:i\geq 1\}$ from the Skorohod's Scheme as in Section \ref{sec:skorohod}, then 
$$
 (Y^1_k)_{k \ge 1} \overset{d}{=} (B(S_k) + 1)_{k \ge 1}
$$
and define 
$$
\widehat{\nu}:= \inf\{k\geq 1: B(S_k) + 1 = 0\}.
$$
We will prove (\ref{pro tau > t}) for $\widehat{\nu}$ what implies (\ref{pro tau > t}) since $\widehat{\nu}\overset{d}{=}\nu$.

\smallskip

For a $\delta>0$ to be fixed later and every $k\in\N$ we have that,
\begin{align}\label{pr[tau>t]<pr[Sk >deltak]+ ...}
\pr[\widehat{\nu}>k]=\pr[S_k\leq\delta k,\widehat{\nu}>k] + \pr[S_k>\delta k,\widehat{\nu}>k].
\end{align}
First we get an upper bound on $\pr[S_k\leq\delta k,\widehat{\nu}>k]$. From the Skorohod's representation 
\begin{align*}
S_k = \sum_{i=1}^{k}\big(S_i-S_{i-1}\big)=\sum_{i=1}^{k}Q_i(B(S_{i-1})),
\end{align*} 
where $\big\{Q_i(m); i\geq1,m\in\Z\big\}$ are independent random variables and $Q_i(m)$ is independent of $(B(S_1),...,B(S_{i-1}))$ for all $i\in\N,m\in\Z$. By definition, on $\{\widehat{\nu}>k\}$ we have that $B(S_i)\neq 0$ for every $i\in\{1,\dots,k\}$.

\smallskip

Fix $\lambda>0$, then
\begin{align*}
\pr[S_k\leq\delta k,\widehat{\nu}>k]=\pr[e^{-\lambda S_k}\geq e^{-\lambda\delta k},\widehat{\nu}>k]\leq e^{\lambda\delta k}\esp[ e^{-\lambda S_k} \mathbbm{1}_{\{\widehat{\nu}>k\}}]
\end{align*}

\begin{claim}\label{sup E[eQ(m)]}
\begin{align*}
\esp[ e^{-\lambda S_k} \mathbbm{1}_{\{\widehat{\nu}>k\}}]\leq \Big(\sup_{m\in\Z\setminus\{0\}}\esp[e^{-\lambda Q(m)}]\Big)^k,
\end{align*}
where, for each $m$, $Q(m)$ is a random variable with the same distribution of $Q_1(m)$.
\end{claim}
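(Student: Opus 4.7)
The plan is to bound the quantity by iteratively conditioning, peeling off one Laplace factor at a time, and using the fact that on $\{\widehat{\nu}>k\}$ the walk $Y^1$ never hits $0$, so the state that parameterizes $Q_i$ always lies in $\mathbb{Z}\setminus\{0\}$.

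Set $\mathcal{F}_i := \sigma\big(B(s):\, s \le S_i;\,(U_j,V_j):\, j\le i\big)$ for $i\ge 0$. By construction of the Skorohod scheme, $S_i$ and the state $Y^1_i = B(S_i)+1$ are $\mathcal{F}_i$-measurable, while the increment $S_{i+1}-S_i = Q_{i+1}(Y^1_i)$ (and the next state $Y^1_{i+1}$) is built from $Y^1_i$ together with the post-$S_i$ Brownian excursion and the pair $(U_{i+1},V_{i+1})$, both of which are independent of $\mathcal{F}_i$ by the strong Markov property. In particular, the conditional law of $Q_{i+1}(Y^1_i)$ given $\mathcal{F}_i$ depends on $\mathcal{F}_i$ only through $Y^1_i$ and coincides with the unconditional law of $Q(m)$ evaluated at $m = Y^1_i$.

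Then write
\[
e^{-\lambda S_k}\,\mathbbm{1}_{\{\widehat{\nu}>k\}} \;=\; \prod_{i=1}^{k} e^{-\lambda(S_i-S_{i-1})} \prod_{i=1}^{k}\mathbbm{1}_{\{Y^1_i \neq 0\}},
\]
condition on $\mathcal{F}_{k-1}$, and bound $\mathbbm{1}_{\{Y^1_k\neq 0\}}\le 1$. On the event $\prod_{i=1}^{k-1}\mathbbm{1}_{\{Y^1_i\neq 0\}}=1$ the state $Y^1_{k-1}$ lies in $\mathbb{Z}\setminus\{0\}$, hence
\[
\esp\!\big[e^{-\lambda(S_k-S_{k-1})}\mathbbm{1}_{\{Y^1_k\neq 0\}}\,\big|\,\mathcal{F}_{k-1}\big]\,\mathbbm{1}_{\{Y^1_{k-1}\neq 0\}} \;\le\; \Big(\sup_{m\in\mathbb{Z}\setminus\{0\}}\esp[e^{-\lambda Q(m)}]\Big)\,\mathbbm{1}_{\{Y^1_{k-1}\neq 0\}}.
\]
Plugging this into the tower property yields the same expression with $k$ replaced by $k-1$ times a factor of $\sup_{m\neq 0}\esp[e^{-\lambda Q(m)}]$, and iterating $k$ times gives the claimed bound.

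The step that deserves the most care is simply the bookkeeping of indicators: the indicator $\mathbbm{1}_{\{Y^1_{j-1}\neq 0\}}$ must be kept alive through the $j$-th peel so that the supremum can legitimately be restricted to $m\in\mathbb{Z}\setminus\{0\}$, while the indicator $\mathbbm{1}_{\{Y^1_j\neq 0\}}$ currently being peeled off is harmlessly bounded by $1$. There is no deeper obstacle; the estimate is a routine nested application of the Markov/independence structure of the Skorohod representation.
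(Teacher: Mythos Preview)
Your proof is correct and follows essentially the same approach as the paper: iterated conditioning that peels off one Laplace factor at a time, using that $\{\widehat{\nu}>k\}$ forces $Y^1_{k-1}\in\mathbb{Z}\setminus\{0\}$ so the supremum can be restricted accordingly. The only cosmetic difference is that you work with the richer filtration $\sigma(B(s):s\le S_i;\,(U_j,V_j):j\le i)$ while the paper uses $\sigma(B(S_1),\dots,B(S_{k}))$; both choices yield the same one-step bound.
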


\begin{proof}[Proof of Claim \ref{sup E[eQ(m)]}]
The proof is essentially the same given in Theorem $4$ in \cite{coletti2009scaling}. We include it here for the sake of completeness. Taking $\mathcal{F}_k:=\sigma(B(S_1),\dots,B(S_k))$ we have that
\begin{align*}
\esp[e^{-\lambda S_k}\mathbbm{1}_{\{\widehat{\nu}>k\}}]&=\esp\Big[\esp[e^{-\lambda S_k}\mathbbm{1}_{\{\widehat{\nu}>k\}}|\mathcal{F}_{k-1}]\Big]\\&\leq\esp\Big[e^{-\lambda S_{k-1}}\esp[e^{-\lambda Q_k(B(S_{k-1}))}\mathbbm{1}_{\{\widehat{\nu}>k-1\}}\mathbbm{1}_{\{B(S_{k-1}))\neq 0\}}|\mathcal{F}_{k-1}]\Big]\\&=\esp\Big[e^{-\lambda S_{k-1}}\mathbbm{1}_{\{\widehat{\nu}>k-1\}}\esp[e^{-\lambda Q_k(B(S_{k-1}))}\mathbbm{1}_{\{B(S_{k-1})\neq 0\}}|\mathcal{F}_{k-1}]\Big]
\end{align*}
and
\begin{align*}
\esp[e^{-\lambda Q_k(Y_{k-1})}\mathbbm{1}_{\{B(S_{k-1})\neq 0\}}|\mathcal{F}_{k-1}]&=\sum_{m\in\Z\setminus \{0\}}\esp[e^{-\lambda Q_k(m)}\mathbbm{1}_{\{B(S_{k-1})=m\}}|\mathcal{F}_{k-1}]\\&=\sum_{m\in\Z\setminus\{0\}}\mathbbm{1}_{\{Y_{k-1}=m\}}\esp[e^{-\lambda Q_k(m)}|\mathcal{F}_{k-1}]\\&=\sum_{m\in\Z\setminus\{0\}}\mathbbm{1}_{\{Y_{k-1}=m\}}\esp[e^{-\lambda Q(m)}]\\&\leq\sup_{m\in\Z\setminus\{0\}}\esp[e^{-\lambda Q(m)}].
\end{align*}
So, applying the above argument recursively we obtain
\begin{align*}
\esp[e^{-\lambda S_k}\mathbbm{1}_{\{\widehat{\nu}>k\}}]\leq\esp[e^{-\lambda S_{k-1}}\mathbbm{1}_{\{\widehat{\nu}>k-1\}}]\Big(\sup_{m\in\Z\setminus\{0\}}\esp[e^{-\lambda Q(m)}]\Big)\leq\Big(\sup_{m\in\Z\setminus\{0\}}\esp[e^{-\lambda Q(m)}]\Big)^k.
\end{align*}
\end{proof}

Using Claim \ref{sup E[eQ(m)]} we get that 
\begin{align*}
\pr[S_k\leq\delta k,\widehat{\nu}>k]\leq \Big(e^{\lambda\delta}\sup_{m\in\Z\setminus\{0\}}\esp[e^{-\lambda Q(m)}]\Big)^k \, .
\end{align*}

Let $Q_{-1,1}$ be the exit time of interval $(-1,1)$ by a Standard Brownian motion. If $(U(m),V(m))\neq (0,0)$, then $U(m) \le -1$ and $V(m) \ge 1$ so $Q_{-1,1} \le Q(m)$ almost surely. Therefore
\begin{align} \label{cotaUV}
\esp\big[e^{-\lambda Q(m)}\big]&=\esp\big[e^{-\lambda Q(m)}|(U(m),V(m))\neq (0,0))\big]\pr[(U(m),V(m))\neq (0,0)] \nonumber \\
&+\pr\big[(U(m),V(m))= (0,0)\big] \nonumber \\
&\leq\esp\big[e^{-\lambda Q_{-1,1}}\big]\Big(1-\pr\big[(U(m),V(m))= (0,0)\big]\Big)+\pr\big[(U(m),V(m))= (0,0)\big] 
\end{align} 

\begin{claim}\label{sup p[U(m),V(m)] = (0,0)} $0<c_1:=\sup_{m\in\Z\setminus\{0\}}\pr\big[(U(m),V(m))=(0,0)\big]<1.$
\end{claim}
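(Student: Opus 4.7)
The plan is to identify the event $\{(U_1(m), V_1(m)) = (0, 0)\}$ with $\{Y^m_1 = m\}$, i.e., with the event that the horizontal gap between the paths starting at $(0,0)$ and $(m, 0)$ is unchanged at the first common renewal time $T_1(u_0, u_m)$. Indeed, in the Skorohod scheme the first increment satisfies $B(S_1) = 0$ exactly when $(U_1(m), V_1(m)) = (0, 0)$, so $Y^m_1 - m = 0$. By the reflection symmetry of the GRDF it suffices to consider $m \geq 1$.

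For the lower bound $c_1 > 0$, I would exhibit one explicit event of positive probability uniform in $m$ on which $Y^m_1 = m$. Let
\begin{align*}
F_m := \bigcap_{j=1}^{K} \Big\{ (0, j) \text{ and } (m, j) \text{ are open}, \ W_{(0, j-1)} = W_{(m, j-1)} = 1 \Big\}.
\end{align*}
For $m \geq 1$ the constraints involve disjoint sites and independent random variables, so $\pr[F_m] = p^{2K} (\pr[W=1])^{2K}$, which is positive and does not depend on $m$. On $F_m$ both paths climb straight up with unit $L_1$-jumps, so the set added to each history region at every step is empty (the $L_1$-ball of radius $1$ above the current position is excluded by the time restriction in \eqref{Delta}). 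Tracing through the constructions of Lemma \ref{tpr} and Proposition \ref{renewal time proposition} one checks $\xi_1 = K$, $M = 1$, $G = 1$, giving $T_1(u_0, u_m) = K$, $\pi^{u_0}(K) = 0$, $\pi^{u_m}(K) = m$, hence $Y^m_1 = m$. This yields $c_1 \geq p^{2K} (\pr[W=1])^{2K} > 0$.

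For the upper bound $c_1 < 1$, I would show $\inf_{m \geq 1} \pr[Y^m_1 \neq m] > 0$. For each fixed $m$, $\pr[Y^m_1 \neq m] > 0$, since otherwise the walk $(Y^m_n)_{n \ge 0}$ would be identically equal to $m$ (by the strong Markov property at renewal times) and could never hit $(-\infty, 0]$, contradicting Lemma \ref{a_l}(i). So the remaining task is a uniform positive lower bound for $m \geq M_0$ with some finite $M_0$. The idea is to construct, for each such $m$, events $A_0$ and $A_m$ depending only on the environment in disjoint finite neighborhoods of $(0,0)$ and $(m,0)$, on which $\pi^{u_0}$ makes a sideways first jump to $(1,1)$ and then climbs straight up along the column $\{(1, \cdot)\}$, while $\pi^{u_m}$ climbs straight up along the column $\{(m, \cdot)\}$, in such a way that the common renewal construction of Proposition \ref{renewal time proposition} terminates at the same deterministic time $t^*$ with both history regions empty. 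On $A_0 \cap A_m$ one has $\pi^{u_0}(t^*) = 1$ and $\pi^{u_m}(t^*) = m$, so $Y^m_1 = m - 1 \neq m$. Independence of $A_0$ and $A_m$ for $m \geq M_0$ yields $\pr[A_0 \cap A_m] = \pr[A_0]\pr[A_m] \geq c > 0$, uniformly in $m$.

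The main obstacle lies in the bookkeeping required to force Lemma \ref{tpr}'s construction of $T_1(u_0)$ to terminate deterministically within the finite region prescribed by $A_0$. Since the sideways jump forces $(0, 1)$ to be closed, the event $E_0(u_0)$ fails, and $A_0$ must in addition prescribe enough of the columns above $(0,0)$ and above $(1, 1)$ to pin down $\xi_1$, $\tilde{n}_1$ and the first index at which $E_j(u_0)$ succeeds; a symmetric care is needed for $A_m$ to ensure that the event $\widehat{E}_1(u_m)$ of Proposition \ref{renewal time proposition} holds and produces the same $t^*$. Once these finite prescriptions are in place, the independence of the two environment windows delivers the uniform lower bound and hence $c_1 < 1$.
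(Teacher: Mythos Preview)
Your lower bound is essentially the paper's approach (exhibit an explicit event of $m$-independent positive probability on which $Y^m_1=m$), but executed more carefully: the paper's Figure~4 only prescribes the first step of each path, whereas your $F_m$ prescribes $K$ steps for each path and thereby triggers $E_0(u_0)$ and $\widehat{E}_1(u_m)$ directly, so that $T_1(u_0,u_m)=K$ is verifiable from the constructions in Lemma~\ref{tpr} and Proposition~\ref{renewal time proposition}. This part is correct.

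The upper bound, however, has a genuine gap. The reduction to finitely many $m$ via Lemma~\ref{a_l}(i) is fine, but for large $m$ you only \emph{outline} the events $A_0$, $A_m$ and explicitly defer the bookkeeping needed to pin down $\xi_1$, $\tilde{n}_1$, $M$ and $G$ inside the finite windows. Until those prescriptions are actually written out and checked against the renewal construction, the bound $\inf_m \pr[Y^m_1\neq m]>0$ is not established. The paper, by contrast, avoids the small/large split and simply writes down one explicit configuration (its Figure~5) valid for every $m$, displacing the $v$-path by one unit by time~$2$. A clean way to complete \emph{your} argument is to keep the $u_0$-side exactly as in your $F_m$ but over $2K$ steps (so $E_0(u_0)$ and $E_0((0,K))$ both hold, giving $T_1(u_0)=K$, $T_2(u_0)=2K$), and on the $u_m$-side force $(m,1),(m,2),(m{-}1,1)$ closed, $(m{+}1,1),\dots,(m{+}1,2K)$ open, and the relevant $W$'s equal to $1$: then $\widehat{E}_1(u_m)$ fails, $s_2=K$, $\widehat{E}_2(u_m)$ holds, $G=2$, and $Y^m_1=m{+}1$. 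This works with disjoint windows for $m\ge 2$, and the finitely many remaining $m$ are covered by your first step.
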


Using Claim \ref{sup p[U(m),V(m)] = (0,0)} and \eqref{cotaUV}, we obtain that
$$
\esp\big[e^{-\lambda Q(m)}\big] \le c_1(1-c_2)+c_2 \, .
$$
where $c_2=\esp\big[e^{-\lambda Q_{-1,1}}\big]<1$. 

Now chose $\delta$ such that $c_3:=e^{\delta\lambda}\big[c_1(1-c_2)+c_2\big]<1$. Then 
\begin{align}
\pr[S_k\leq\delta k,\widehat{\nu}>k]\leq c_3^k\leq\frac{c_4}{\sqrt{k}},
\end{align}
for some suitable $c_4>0$. This gives the bound we need on the first term of \eqref{pr[tau>t]<pr[Sk >deltak]+ ...}. Let us prove Claim \ref{sup p[U(m),V(m)] = (0,0)} before dealing with the second term on the right hand side of \eqref{pr[tau>t]<pr[Sk >deltak]+ ...}.

\begin{proof}[Proof of Claim \ref{sup p[U(m),V(m)] = (0,0)}] 
To simplify notation write $W := W_{(0,0)}$. The proof uses the hypothesis that $P(W=1)>0$ given in the definition of the environment for the GRDF. However by a straightforward adaptation, one can see that this is not required for the claim to remain valid.  

Recall that the event $\{\big(U(m),V(m)\big)=(0,0)\}$ is equivalent to the event that two paths in the GRDF initially at distance $m$ remain at distance $m$ on their first common renewal time. So, one can check, see Figure 4, that for all $m\in\Z\setminus\{0\}$ we have
\begin{align*}
    \pr[\big(U(m),V(m)\big)=(0,0)]\geq(p\pr[W=1])^2 \, .
\end{align*}
\begin{figure}[H]\label{UV=00-1}
\begin{tikzpicture}
\draw [step= 1cm, gray, very thin] (-0.7,-0.7) grid (3.7,1.7);
\fill[black](0,1) circle (1mm) ;\fill[black](3,1) circle (1mm) ;
\fill[black](0,0) node[below right]{$u$} circle (1mm);\fill[black](3,0) circle (1mm) node[below right]{$v$} ;
\draw[->,black] (0,0) -- (0,1);\draw[->,black] (3,0) -- (3,1);
\end{tikzpicture}
\caption{If $W_u = W_v = 1$ and $u + e_2$ and $v + e_2$ are open, which occurs with probability $(p\pr[W=1])^2$, then $\big(U(m),V(m)\big)=(0,0)$.}
\end{figure}
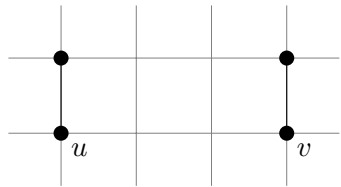
So $c_1:=\sup_{m\geq 1}\Big\{\pr[\big(U(m),V(m)\big)=(0,0)]\Big\}\geq(p\pr[W=1])^2>0.$ 

For the upper bound in the statement we have that
\begin{align*}
 \pr[\big(U(m),V(m)\big)\neq(0,0)]\geq (1-\frac{p}{2})p^3(1-p)^3(\pr[W=1])^3,   
\end{align*}
for all $m\in \mathbb{Z} \setminus \{0\} $, see Figure 5.
\begin{figure}[H]\label{UV=00-2}
\begin{tikzpicture}
\draw [step= 1cm, gray, very thin] (-0.7,-0.7) grid (4.7,2.7);
\fill[black](0,2) circle (1mm) ;\fill[black](4,1) circle (1mm);\fill[black](4,2) circle (1mm);
\draw[->,black] (0,0) -- (0,2);\draw[->,black] (3,0) -- (4,1);\draw[->,black] (4,1) -- (4,2);
\fill[white](0,1) circle (1mm);\draw[black](0,1) circle (1mm); circle (1mm)\fill[white](3,1) circle (1mm);\draw[black](3,1) circle (1mm);\fill[white](3,2) circle (1mm);\draw[black](3,2) circle (1mm);
\fill[black](0,0) circle (1mm) node[below right]{$u$};\fill[black](3,0) circle (1mm) node[below right]{$v$};
\end{tikzpicture}
\caption{If $W_u = W_v = W_{v+e_1+e_2} = 1$ and $u + 2 e_2$, $v+e_1+e_2$ and $v+e_1+2e_2$ are open and $u + e_2$, $v + e_2$ and $v+2e_2$ are closed, then with probability at least $(1-\frac{p}{2})p^3(1-p)^3(\pr[W=1])^3$, we have $(U(m),V(m)\big)\neq(0,0)$ for the pair of paths starting at $u$ and $v$.}
\end{figure}
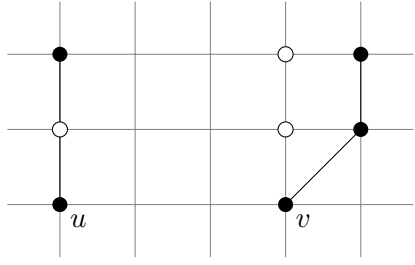
Hence $\inf_{m\in\Z\setminus\{0\}}\pr[\big(U(m),V(m)\big)\neq (0,0)]\geq (1-\frac{p}{2})p^3(1-p)^3(\pr[W=1])^3$. So,
\begin{align*}
c_1\leq 1 -  (1-\frac{p}{2})p^3(1-p)^3(\pr[W=1])^3<1.    
\end{align*}

\end{proof}

\smallskip

It remains to consider the second term on the right hand side of \eqref{pr[tau>t]<pr[Sk >deltak]+ ...}. To deal with it we consider an approach similar to \cite{coletti2014convergence}. Take the sequence $(a_l)_{l\geq 0}$ as in the statement of Lemma \ref{a_l}. Write 
\begin{align}
\pr[\widehat{\nu}> k, S_k>\delta k]\leq \sum_{l=1}^{k}\pr[\widehat{\nu}>k,S_k\geq \delta k, S_{a_{l-1}}<\delta k, S_{a_l}\geq \delta k] \, .
\end{align}
For now fix $l=1,\dots k$, from the definition of the Skorohod scheme
\begin{align*}
\{\widehat{\nu}>k,S_k\geq \delta k, S_{a_{l-1}}<\delta k, S_{a_l}\geq k\delta\}\subseteq\{B(S_{a_j}) + 1\neq 0, \text{ for } j = 1,\dots,l-1, S_{a_l}\geq k\delta\},
\end{align*}
thus
\begin{align*}
&\pr[\widehat{\nu}>k,S_k\geq \delta k, S_{a_{l-1}}<\delta k, S_{a_l}\geq k\delta]\\&\leq\pr[B(S_{a_j}) + 1 \neq 0, \text{ for } j = 1,\dots,l-1, S_{a_l}\geq k\delta]\\&=\pr\big[S_{a_l}\geq k\delta\big|B(S_{a_j}) + 1 \neq 0, \text{ for } j=1,\dots l-1\big]\pr\big[B(S_{a_j}) + 1 \neq 0, \text{ for } j=1,\dots l-1\big] \, .
\end{align*}

By item $(iv)$ in Lemma $\ref{a_l}$ we get 
\begin{align*}
\pr[\widehat{\nu}>k,S_k\geq \delta k, S_{a_{l-1}}<\delta k, S_{a_l}\geq k\delta]&\leq c_5^{l-1}\pr[S_{a_l}\geq k\delta|B(S_{a_j})+1\neq 0, \text{ for } j=1,\dots l-1].
\end{align*}
By Lemma \ref{R_j} there exist $({R}_j)_{j\geq 1}$ i.i.d. random variables with values on $\mathbb{N}$ which are independent of $(B(s))_{s\geq 0}$ such that if ${J}_0=0$ and
\begin{align*}
{J}_j := \inf\{s\geq {J}_{j-1}:B(s)-B({J}_{j-1})=(-1)^l({R}_j+{R}_{j-1})\}, j\geq 1,
\end{align*}
then 
\begin{align*}
\pr\big[S_{a_l} \geq k\delta \big|B(S_{a_j})+1\neq 0, \text{ for } j=1,\dots l-1\big] \le \pr[{J}_l \geq k\delta].
\end{align*}
Take $D_j:={J}_j-{J}_{j-1}$ for $j\geq 1$ and observe that $(D_j)_{j\geq 1}$ is an i.d. sequence, then we have that
\begin{align*}
\pr[\widehat{\nu}>k,S_k\geq \delta k, S_{a_{l-1}}<\delta k, S_{a_l}\geq k\delta]&\leq c_5^{l-1}\pr[{J}_l\geq k\delta]\leq c_5^{l-1} \, l \, \pr\Big[D_1\geq\frac{k\delta}{l}\Big].
\end{align*}

\smallskip

\begin{claim}\label{$D_j$}
 There exists a constant $c_7>0$ such that for every $x>0$ we have that
 \begin{align*}
 \pr[D_1\geq x]\leq\frac{c_7}{\sqrt{x}}.
 \end{align*}
\end{claim}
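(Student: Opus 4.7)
The plan is to recognize $D_1 = J_1 - J_0 = J_1$ as a Brownian hitting time and then integrate over the (independent) random target level. Specifically, by Lemma \ref{R_j}, $\{\mathbb{B}(s) : s \geq 0\}$ is a standard Brownian motion independent of $(R_0, R_1)$, and
$$
D_1 = \inf\{s \geq 0 : \mathbb{B}(s) - \mathbb{B}(0) = -(R_1 + R_0)\}.
$$
So $D_1$ is the first hitting time of the level $-(R_0 + R_1)$ by a standard Brownian motion started at $0$, and the target level is independent of the Brownian path.

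Next I would condition on $R_0 + R_1 = a$ and invoke the classical estimate
$$
\pr\!\left[\inf\{s \geq 0 : \mathbb{B}(s) = -a\} \geq x\right] = 2\,\pr\!\left[0 \leq N(0,1) \leq a/\sqrt{x}\right] \leq \sqrt{\tfrac{2}{\pi}}\,\frac{a}{\sqrt{x}},
$$
valid for $a \geq 0$, where $N(0,1)$ is a standard normal random variable. Taking expectation over $a = R_0 + R_1$ using independence gives
$$
\pr[D_1 \geq x] \leq \sqrt{\tfrac{2}{\pi}}\,\frac{\esp[R_0 + R_1]}{\sqrt{x}}.
$$

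Finally, I would verify that $\esp[R_0 + R_1] < \infty$: by Lemma \ref{R_j}, $R_0$ is integrable by assumption, and $R_1$ is integrable because on $\{R_1 \neq 0\}$ one has $R_1 \stackrel{d}{=} R_0$ from item (i) of that lemma. Taking $c_7 := \sqrt{2/\pi}\,\esp[R_0 + R_1]$ then yields the desired bound.

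There is essentially no obstacle here beyond bookkeeping: the only thing to double-check is that the construction of $\mathbb{B}$ in Lemma \ref{R_j} really produces a genuine standard Brownian motion independent of the target $R_0 + R_1$ (and not merely an adapted process), which is why the proof of Lemma \ref{R_j} explicitly introduces auxiliary independent Brownian motions $\tilde{B}^j$ to fill in the excursions. Once that independence is granted, the estimate is a one-line application of the reflection principle averaged over $a$.
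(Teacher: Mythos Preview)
Your argument is correct. The paper does not actually prove this claim; it simply states that it is Lemma~3.6 in \cite{coletti2014convergence} and refers the reader there. Your proof---recognizing $D_1$ as the first hitting time of the independent random level $-(R_0+R_1)$ by a standard Brownian motion, applying the reflection-principle bound $\pr[\tau_a\geq x]\leq\sqrt{2/\pi}\,a/\sqrt{x}$, and averaging over $a=R_0+R_1$ using $\esp[R_0+R_1]<\infty$---is exactly the standard argument one would expect to find in that reference, and the integrability check for $R_1$ via item~(i) of Lemma~\ref{R_j} is done correctly.
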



The previous claim is Lemma 3.6 in \cite{coletti2014convergence} where the reader can find a proof. Using Claim \ref{$D_j$} we have some constant $c_8$ such that
\begin{align*}
\pr[\widehat{\nu}>k,S_k\geq \delta k, S_{a_{l-1}}<\delta k, S_{a_l}\geq k\delta]\leq \frac{c_8 c_5^l l^{\frac{3}{2}}}{\sqrt{k}},
\end{align*}
then
\begin{align}
\pr[\widehat{\nu}> k, S_k>\delta k]\leq \sum_{l=1}^{k}\frac{c_8 c_5^l l^{\frac{3}{2}}}{\sqrt{k}}\leq\frac{c_8}{\sqrt{k}}\sum_{l=1}^{\infty} c_5^l l^{\frac{3}{2}}=\frac{c_9}{\sqrt{k}}.
\end{align}
Then we get that
\begin{align*}
\pr[\widehat{\nu}>k]\leq\frac{c_4+c_9}{\sqrt{k}}.
\end{align*}
\end{proof}   

As an immediate consequence of Proposition \ref{tau(u,v)} we have:

\smallskip

\begin{corollary}\label{tau(0,le)}
Let $u=(0,0),v=(l,0)$ and $\vartheta(u,v):= \inf\{t\geq 0: \pi^u(s)=\pi^v(s)\text{ for all }s\geq
 t\}$. Then there exists a positive constant $C$ such that
\begin{align*}
\pr[\vartheta(u,v)>k]\leq\frac{Cl}{\sqrt{k}}.
\end{align*}
\end{corollary}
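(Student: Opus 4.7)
The plan is to reduce Corollary \ref{tau(0,le)} to Proposition \ref{tau(u,v)} by a simple monotonicity-plus-union-bound argument, exploiting the translation invariance of the GRDF.

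First, I would introduce the intermediate paths starting at the integer points between $u$ and $v$. Set $u_i := (i,0)$ for $i = 0, 1, \dots, l$, so that $u_0 = u$ and $u_l = v$, and define the pairwise coalescing times
\begin{align*}
\vartheta_i := \inf\{t \geq 0 : \pi^{u_i}(s) = \pi^{u_{i+1}}(s) \text{ for all } s \geq t\}, \quad i = 0, 1, \dots, l-1.
\end{align*}
By the translation invariance of the environment (the laws of the families $\{U_v\}$ and $\{W_v\}$ are invariant under horizontal shifts), each $\vartheta_i$ has the same distribution as $\vartheta = \vartheta(u_0, u_1)$ from Proposition \ref{tau(u,v)}.

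Next I would observe the key deterministic inequality
\begin{align*}
\vartheta(u,v) \leq \max_{0 \leq i \leq l-1} \vartheta_i.
\end{align*}
Indeed, for any $t \geq \max_i \vartheta_i$ and any $s \geq t$, we have $\pi^{u_i}(s) = \pi^{u_{i+1}}(s)$ for every $i = 0,\dots, l-1$, and chaining these equalities yields $\pi^u(s) = \pi^v(s)$. Note that this uses only the definition of $\vartheta_i$ (once two paths coincide on $[t,\infty)$, they stay coincident), not any non-crossing property, so it is valid even though our paths are allowed to cross.

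Combining the two observations with a union bound gives
\begin{align*}
\pr[\vartheta(u,v) > k] \leq \pr\Big[\max_{0 \leq i \leq l-1} \vartheta_i > k\Big] \leq \sum_{i=0}^{l-1} \pr[\vartheta_i > k] = l\, \pr[\vartheta > k] \leq \frac{Cl}{\sqrt{k}},
\end{align*}
where the last inequality is Proposition \ref{tau(u,v)}. There is no substantive obstacle here; the only point worth double-checking is the deterministic inequality, which as noted does not rely on non-crossing and hence is safe in the GRDF setting.
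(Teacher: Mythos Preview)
Your proof is correct and is essentially the same as the paper's: both introduce the intermediate starting points $(i,0)$, observe the inclusion $\{\vartheta(u,v)>k\}\subset\bigcup_{i=1}^{l}\{\vartheta((i-1)e_1,ie_1)>k\}$ (equivalently your inequality $\vartheta(u,v)\le\max_i\vartheta_i$), and apply a union bound together with translation invariance and Proposition~\ref{tau(u,v)}. Your extra remark that the chaining argument does not require non-crossing is a nice clarification not spelled out in the paper.
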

\begin{proof}
Put $e_1:=(1,0)$. Since $\{ \vartheta(u,v)>k \} \subset \cup_{i=1}^l \{\vartheta\big((i-1)e_1,ie_1\big) > k\}$ we have that
\begin{align*}
\pr[\vartheta(u,v)>k]\leq\sum_{i=1}^l\pr\Big[\vartheta\big((i-1)e_1,ie_1\big)>k\Big]\leq\frac{Cl}{\sqrt{k}}.
\end{align*}
\end{proof}

\section{Condition $I$.}
\label{sec:I}

In this section we will prove condition $I$ of Theorem \ref{convergence theo}.  
The invariance principle for single paths can be proved analogously to the proof found in \cite{roy2013random} for the Directed Random Forest. All we need is a uniform bound on a moment of order higher than two for the increments of the path on the renewal times. Finally to get condition $I$, we will follow the technique introduced in \cite{coletti2014convergence}. It is based on building a coupling between a finite collection of GRDF paths and a collection of paths which are independent until coalescence, and have the same marginal distributions as those in the GRDF. The difficult again arises from the need to work with the renewal times to construct the coupling. 

\begin{proposition}\label{one path convergence}
There exist positive constants $\gamma$ and $\sigma$ such that for any $u\in\Z^2$ the rescaled path $\pi_{n}^u$, as defined in $(2.2)$, converges in distribution to a Brownian motion starting at u.
\end{proposition}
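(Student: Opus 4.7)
The plan is to exploit the renewal structure established in Corollary \ref{renewal time corollary one path} to reduce the convergence of $\pi_n^u$ to Donsker's invariance principle applied to a random walk with i.i.d. increments, followed by a random time change. Without loss of generality we may take $u = (0,0)$ by translation invariance.

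First I would introduce the embedded random walk. Let $\Delta X_j := X^u_{\tilde\tau_j(u)}(1) - X^u_{\tilde\tau_{j-1}(u)}(1)$ and $\Delta T_j := T_j(u) - T_{j-1}(u)$, with $T_0(u)=0$. By Corollary \ref{renewal time corollary one path} (iii)--(iv), the pairs $(\Delta X_j, \Delta T_j)$ are i.i.d.\ with all moments finite; moreover, the distribution of $\Delta X_j$ is symmetric (from the left-right symmetry of the GRDF environment), hence $\esp[\Delta X_j]=0$. Set
\begin{align*}
\mu := \esp[\Delta T_1], \qquad s^2 := \var[\Delta X_1], \qquad \gamma := 1, \qquad \sigma^2 := s^2/\mu.
\end{align*}
By Donsker's invariance principle applied to $S_k := \sum_{j=1}^k \Delta X_j$, the rescaled polygonal interpolation $k \mapsto S_{\lfloor n^2 k\rfloor}/(n s)$ converges in distribution in $C([0,\infty),\R)$ to a standard Brownian motion $B$.

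Next I would perform the random time change. Let $N(t) := \sup\{k \ge 0 : T_k(u) \le t\}$ be the renewal counting process. By the strong law of large numbers for renewal processes, $N(n^2 t)/n^2 \to t/\mu$ almost surely, uniformly on compact time intervals. Combining this with the previous step via the standard continuous-mapping / random-time-change argument (see, e.g., Billingsley), one obtains
\begin{align*}
\frac{S_{N(n^2 t)}}{n \sigma} = \frac{S_{N(n^2 t)}}{n s}\cdot\sqrt{\mu} \;\Longrightarrow\; B(t),
\end{align*}
since $S_{N(n^2 t)}/(ns)$ behaves like $S_{\lfloor n^2 t/\mu\rfloor}/(ns)$, which converges to $B(t/\mu) \stackrel{d}{=} \sqrt{1/\mu}\,B(t)$.

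The main technical obstacle is the final step: controlling the deviation of $\pi_n^u(t)$ from the embedded walk values $S_{N(n^2 t)}/(n\sigma)$ on the time intervals strictly between renewal times. On $[T_{j-1},T_j]$ the interpolated path lies within $Z_j(u)$ of $X^u_{\tilde\tau_{j-1}(u)}(1)$, so it suffices to show that for each $T>0$,
\begin{align*}
\frac{1}{n}\,\max_{1\le j\le N(n^2 T)+1} Z_j(u) \;\longrightarrow\; 0 \quad \text{in probability.}
\end{align*}
Since the $Z_j$ are i.i.d.\ with all moments finite (Corollary \ref{renewal time corollary one path} (iii)) and $N(n^2 T) \le C n^2$ with high probability, a Chebyshev / union-bound estimate using $\pr[Z_1 \ge \varepsilon n] \le C_k (\varepsilon n)^{-k}$ for large $k$ yields the claim. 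The same bound also controls the time-component discrepancy: $n^2 T - T_{N(n^2 T)}(u) \le Z_{N(n^2 T)+1}$, so the evaluation of $S$ at $N(n^2 t)$ versus the ``correct'' renewal index is negligible in the rescaling. Together with the random-time-change convergence above, this yields $\pi_n^u \Rightarrow u + B(\cdot)$ in the appropriate path space, completing the proof.
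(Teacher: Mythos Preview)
Your proposal is correct and follows essentially the same route as the paper: reduce to Donsker's theorem for the i.i.d.\ spatial increments at renewal times, pass through a random time change governed by the renewal counting of the $T_j$'s, and control the oscillation of $\pi^u$ between consecutive renewal times via a union bound using the moment estimates on the $Z_j$'s. The only cosmetic differences are your choice of normalization $(\gamma,\sigma)=(1,\sqrt{s^2/\mu})$ versus the paper's $(\gamma,\sigma)=(\esp[T_1],\sqrt{\var(Y_1)})$ (equivalent, as the paper itself notes), and your use of $N(t)$ directly where the paper works with the interpolated index $A(t)\in[N(t),N(t)+1]$.
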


\begin{proof}
 Without loss of generality we can assume that $u=(0,0)$. To simplify notation we will omit $u$ from it, i.e. we will write $X_m$ instead of $X_m^u$, $\pi(t)$ instead of $\pi^u(t)$ and so on. Taking $T_0:=0,\tau_0:=0$ and $(T_j)_{j\geq 1}$, $(\tau_j)_{j\geq 1}$ as defined in Corollary \ref{renewal time corollary one path}. We introduce the auxiliary path $\widetilde{\pi}$ obtained from the linear interpolation of the values of $(\pi(t))_{t\geq 0}$ on the renewal times,  
\begin{align*}
\widetilde{\pi}(t)&:= \pi(T_j) + \frac{t-T_{j}}{T_{j+1}-T_{j}}\Big[\pi(T_{j+1})-\pi(T_j)\Big], \text{ for } T_j\leq t\leq T_{j+1}.
\end{align*}
Put
\begin{align*}
Y_j&:=\pi(T_j)-\pi(T_{j-1}) \text{ for } j\geq 1.\\
S_0&:=0, S_i:=\sum_{j=1}^iY_j \text{ for } i\geq 1.
\end{align*}
Let $\sigma^2=\var(Y_1)$, then by Donsker's invariance principle we have that $(\widehat{\pi}_{n}(t))_{t\ge 0}$, defined as
\begin{align*}
\widehat{\pi}_{n}(0):= 0\text{, }\widehat{\pi}_{n}(t):= \frac{1}{n\sigma}\Big[(n^{2}t-\lfloor n^{2}t\rfloor)Y_{\lfloor n^{2}t\rfloor+1}+S_{\lfloor n^{2}t \rfloor}\Big] \text{ for } t>0 \, ,
\end{align*}
converges in distribution as $n\rightarrow\infty$ to a standard Brownian motion $(B(t))_{t\ge 0}$. It turns out that $\widetilde{\pi}$ suitably rescaled is a time change of $\widehat{\pi}_{n}$ which will allow us to prove convergence for the former.

Define
\begin{align*}
A(t):= j + \frac{t-T_{j}}{T_{j+1}-T_{j}} \text{ for } T_{j}\leq t < T_{j+1}.
\end{align*} 
Put $N(t): = \sup\{j\geq 1: T_j \leq t\} \text{ for } t>0$ and 
note that  $N(t)\leq A(t)\leq N(t) + 1$ for all $t>0$. Since $T_{j}-T_{j-1}$, $j \ge 1$, are positive valued i.i.d random variables, $(N(t))_{t\ge 0}$ is a renewal process. By the Renewal Theorem $\frac{N(t)}{t}\rightarrow \frac{1}{\esp[T_{1}]}$  almost surely as $t\rightarrow \infty$. Hence for $\gamma=\esp[T_{1}]$ we have that $\frac{A(n^{2}\gamma t)}{n^{2}}\rightarrow t$ almost surely. For $n\geq 1$ let us rescale $\widetilde{\pi}$ as 
\begin{align*}
\widetilde{\pi}_{n}(t):= \frac{\widetilde{\pi}(\gamma n^{2}t)}{n\sigma} \text{ for }t\geq 0.
\end{align*}
Note that
\begin{align*}
 \widetilde{\pi}_{n}(t)= \widehat{\pi}_{n}\Big(\frac{A(n^{2}\gamma t)}{n^{2}}\Big) \text{ for } t\geq 0,
\end{align*}
and from this we get that $(\widetilde{\pi}_{n}(t))_{t \ge 0}$ converges in distribution to a $(B(t))_{t\ge 0}$, we leave the details to the reader. 

To prove the convergence of $(\pi_{n}(t))_{t \ge 0}$ to $(B(t))_{t\ge 0}$ it is enough to show that for any $\epsilon > 0$ and $s>0$, $\pr\big[\sup_{0\leq t \leq s} |\pi_{n}(t)-\widetilde{\pi}_{n}(t)|>\epsilon\big]\rightarrow 0$ as $n\rightarrow \infty$. Since 
\begin{align*}
\Big\{\sup_{0\leq t \leq s}|\pi_{n}(t)-\widetilde{\pi}_{n}(t)|>\epsilon\Big\} & = \Big\{\sup_{0\leq t \leq sn^{2}\gamma}|\pi(t)-\widetilde{\pi}(t)|>\epsilon n\sigma\Big\}\\
&\subset \bigcup_{j=0}^{N(sn^{2}\gamma)}\Big\{\sup_{T_{j}\leq t \leq T_{j+1}}|\pi(t)-\widetilde{\pi}(t)|>\epsilon n\sigma\Big\} 
\end{align*}
and $N(sn^{2}\gamma)\leq C \lfloor sn^{2} \rfloor$ for some $C>0$ for every $s > 0$ and $n\ge 1$, we have that  
 \begin{align*}
  \Big\{\sup_{0\leq t \leq sn^{2}\gamma}|\pi(t)-\widetilde{\pi}(t)|>\epsilon n\sigma\Big\}\subseteq \bigcup_{j=0}^{C \lfloor sn^{2}\rfloor}\Big\{\sup_{T_{j}\leq t \leq T_{j+1}}|\pi(t)-\widetilde{\pi}(t)|>\epsilon n\sigma\Big\}.
    \end{align*}
By definition and construction of the renewal structure, $\pi$ and $\widetilde{\pi}$ coincide at the renewal times and their increments are stationary, then
 \begin{align*}
\pr\Big[\sup_{0\leq t \leq s}|\pi_{n}(t)-\widetilde{\pi}_{n}(t)|>\epsilon\Big]\leq (C \lfloor sn^{2} \rfloor+1)\pr\Big[\sup_{0\leq t \leq T_{1}}\{|\pi(t)-\widetilde{\pi}(t)|\}>\epsilon n\sigma\Big].
\end{align*}  
Since $\pi(T_{1})=\widetilde{\pi}(T_{1})$ and $\sup_{0\le t \le T_1} |\pi(t)-\pi(T_{1})|\leq Z$, $\sup_{0\le t \le T_1}  |\widetilde{\pi}(t)-\pi(T_{1})|\leq Z$, where $Z$ is defined in Lemma \ref{tpr}, then
\begin{align*}
 \pr\Big[\sup_{0\leq t \leq s}|\pi_{n}(t)-\widetilde{\pi}_{n}(t)|>\epsilon\Big]\leq(C\lfloor sn^{2}\rfloor+1)\pr[2Z>\epsilon n\sigma]\leq \frac{2^{3}(C \lfloor sn^{2}\rfloor+1)\esp[Z^{3}]}{\epsilon^{3}\sigma^{3}n^{3}}
 \end{align*}
which converges to zero as $n\rightarrow\infty$.
\end{proof}

\medskip

The next proposition, which is condition I of Theorem \ref{convergence theo}, is the main result of this section. 

\medskip

\begin{proposition}\label{I}		
Let $\mathcal{X}_n$ be defined as in $(\ref{$X_n$})$ where the constants $\gamma$ and $\sigma$ are taken as in Proposition \ref{one path convergence}. Then for any $y_1,\dots,y_m\in\R^2$ there exist paths $\theta^{y_1}_n,\dots,\theta^{y_m}_n$ in $\mathcal{X}_n$, such that $(\theta_n^{y_1},\dots,\theta_n^{y_m})$ converges in distribution as $n\rightarrow\infty$ to coalescing Brownian motions starting at $y_1,\dots,y_m$. 
\end{proposition}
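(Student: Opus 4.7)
The plan is to follow the strategy introduced in \cite{coletti2014convergence}: construct a coupling between the actual collection of GRDF paths $(\pi_n^{y_1},\dots,\pi_n^{y_m})$ and an auxiliary collection $(\widetilde\pi_n^{y_1},\dots,\widetilde\pi_n^{y_m})$ of paths that are mutually independent until they first meet and coalesce thereafter, while preserving the marginal distributions. Once such a coupling is available, Proposition \ref{one path convergence} gives that each auxiliary path converges in distribution to a Brownian motion starting at the appropriate point, and the independence--before--coalescence property together with the marginal Brownian convergence is precisely the defining property of coalescing Brownian motions. Thus $(\widetilde\pi_n^{y_1},\dots,\widetilde\pi_n^{y_m})$ converges in distribution to coalescing Brownian motions starting at $y_1,\dots,y_m$.

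The construction of the coupling exploits the renewal structure of Section \ref{sec:renewal}. For concreteness consider $m=2$, the extension to general $m$ being analogous. Using the joint renewal times $\{T_j\}_{j\geq 1}$ from Corollary \ref{renewal time corollary} applied to the two starting points, the idea is the following: at each renewal time $T_j$ before coalescence, both paths carry no information about the environment above $T_j$, so one can splice in an independent copy of the environment for the second path on the slab above $T_j$. This modification preserves the marginal law of each path (by stationarity of the environment law) and, by iterating up to the first joint renewal time at which the two modified paths coincide, produces a pair that is independent until coalescence. For general $m$, the joint renewal times provided by Corollary \ref{renewal time corollary} furnish stopping times at which \emph{all} $m$ paths simultaneously have no knowledge of the environment above, enabling a simultaneous resampling step.

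It then remains to show that the two systems stay uniformly close on compact time intervals, i.e.\ for every $s,\epsilon>0$ and every $i$,
\[
\pr\Big[\sup_{0\le t\le s}|\pi_n^{y_i}(t)-\widetilde\pi_n^{y_i}(t)|>\epsilon\Big]\longrightarrow 0.
\]
Before coalescence in either the original or the coupled system, the displacement of any individual path over a single renewal block is controlled uniformly by the random variable $Z_j$ of Corollary \ref{renewal time corollary}, which has all moments finite; hence after diffusive rescaling the maximal per-block oscillation is $o(1)$ in probability. Once coalescence occurs in one of the two systems, the coalescing-time tail estimate of Proposition \ref{tau(u,v)}, together with translation invariance and Corollary \ref{tau(0,le)}, guarantees that coalescence in the other system happens within a time window of order $o(n^2\gamma)$, so the disagreement between the two representations also vanishes after the rescaling.

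The main obstacle is verifying that the coupling is consistent across the whole family when $m>2$: at a joint renewal time of $(y_1,\dots,y_m)$ one must simultaneously decide which pairs have already met (and must remain identical in both representations) and which have not (and on whose evolutions one splices in independent environments), all while preserving the single-path marginal law of each $\widetilde\pi_n^{y_i}$. This is exactly where the strength of Corollary \ref{renewal time corollary}, giving a renewal structure for the joint family of paths rather than merely for pairs, becomes essential. Combined with the quantitative tail bounds on coalescence times from Section \ref{coalescing time} and the moment bounds on the renewal increments, the iteration closes and yields condition $I$.
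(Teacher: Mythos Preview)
Your high-level strategy is the right one and matches the paper's, but the proposal has a genuine gap at the point where you claim
\[
\pr\Big[\sup_{0\le t\le s}|\pi_n^{y_i}(t)-\widetilde\pi_n^{y_i}(t)|>\epsilon\Big]\longrightarrow 0.
\]
You justify this by saying that the per--renewal--block displacement is controlled by $Z_j$, so after rescaling the oscillation over a single block is $o(1)$. That bounds the oscillation of \emph{one} path over one block; it says nothing about the \emph{difference} between the original path and a path evolving in a resampled environment. If at time $T_j$ you simply splice in an independent environment for the second path on the whole slab above $T_j$, then the original and coupled versions of that path become two random walks in independent environments from time $T_j$ onward, and their separation grows like an independent diffusive random walk---order $n$ after time of order $n^2$, i.e.\ order $1$ after rescaling, not $o(1)$.

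The missing ingredient is a \emph{spatial localization} step. The paper does not resample the whole slab: between successive joint renewals it keeps the original environment inside a box of width $n^{3/4}$ around the $m$-th path, and inside a half-plane reaching $n^{3/4}$ to the right of the other paths, replacing only the complementary regions by independent copies. On the event $\{Z_j\le n^{3/4}\ \forall j\}$ (which has probability $\to 1$ by the moment bounds), each path stays within its own box, and as long as the $m$-th path is more than $2n^{3/4}$ from the others these boxes are disjoint. Hence the coupled $m$-th path is \emph{exactly equal} to the original one up to the first renewal time $T_\zeta$ at which it comes within $2n^{3/4}$ of some other path; there is no accumulated error before that. After $T_\zeta$, both the original and coupled $m$-th paths are within distance $2n^{3/4}$ of the same neighbour, and Corollary~\ref{tau(0,le)} with the choice of a time window $n^\beta$, $\beta\in(3/2,2)$, forces both to coalesce with that neighbour in time $o(n^2)$. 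The paper also runs this as an induction on $m$ (peeling off one path at a time), which sidesteps the consistency issue for $m>2$ that you flag at the end. Without the $n^{3/4}$-box construction (or an equivalent device guaranteeing that the coupled and original paths coincide exactly while far apart), the closeness claim does not follow.
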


\medskip

To prove Proposition \ref{I} we will use a coupling argument. To build the coupling, we will need Proposition \ref{renewals times version} below, which is a version of Proposition \ref{renewal time proposition} that will be presented without proof because its proof follows the same lines as those of Proposition \ref{renewal time proposition}.

\medskip

\begin{proposition}\label{renewals times version}
 Let $\{U^1_v:v\in\Z^2\}$, $\{U^2_v:v\in\Z^2\}$, $\{W_v^1:v\in\Z^2\}$ and $\{W_v^2:v\in\Z^2\}$ be i.i.d. families  independent of each other such that the $U^j_v$, $j=1,2$, are Uniform random variables on $[0,1]$ and of $W_v^j$, $j=1,2$, are identically distributed positives random variables on $\mathbb{N}$ with finite support. Consider the GRDF systems
 \begin{align*}
  \mathcal{X}^1:=\{\pi^{1,v}:v\in\Z^2\} \text{ and }    \mathcal{X}^2:=\{\pi^{2,v}:v\in\Z^2\}
 \end{align*}
built respectively using the random variables $\big\{\{U^1_v:v\in\Z^2\},\{W_v^1:v\in\Z^2\}\big\}$ and  $\big\{\{U^2_v:v\in\Z^2\},\{W_v^2:v\in\Z^2\}\big\}$. Then for points $u_1^1 \neq \dots \neq u^1_{m_1}$ and  $u_1^2 \neq \dots \neq u_{m_2}^2$ in $\Z^2$ at the same time level, i.e. with equal second component, there exist random variables $T$, $Z$ and $\tau(u_i^j)$ for $j=1,2$, $1\leq i\leq m_j$, such that $T\leq Z$ and
 \begin{enumerate}
 \item[(i)] $\Delta^j_{\tau(u_i^j)}(u_i) = \emptyset$ and $X_{\tau(u_1^1)}^{1,u_1^1}(2)=X_{\tau(u_i^j)}^{j,u_i^j}(2)$ for $j=1,2$ and  $i=1,\dots,m_j$. Where for $j=1,2$ and $v\in\Z^2$ the sequence $\{X^{j,v}_k\}_{k\geq 0}$ is defined as in (\ref{$X$}) using the r.v. $\{U^j_v:v\in\Z^2\}$, $\{W_v^j:v\in\Z^2\}$ and $\{\Delta_k^j(v)\}_{k\geq 0}$ is defined as in (\ref{Delta}) for the sequence $\{X^{j,v}_k\}_{k\geq 0}$.
 
 \item[(ii)] Let $T:= X_{\tau(u_1^1)}^{1,u_1^1}(2)$, we have that its distribution depends on $m_1+m_2$ but not on $u_1^j,\dots,u^j_{m_j}$, $j=1,2$. For all $k\geq 1$ we get $\esp\big[T^k\big]<\infty$. Moreover $\pi^{j,u_i^j}(T) = X_{\tau(u^j_i)}^{j,u^j_i}(1)$ for $j=1,2$, $1\leq i\leq m_j$.
 
 \item[(iii)]For all $j=1,2$ and $1\leq i\leq m_j$ $i=1,\dots,m$ we have that $\sup_{0\le t \le T} |\pi^{j,u^j_i}(t) - u^j_i(1)|\leq Z$ and its distribution depends on $m_1+m_2$ but not on $u_1^j,\dots,u^j_{m_j}$ for $j=1,2$. Also for all $k\geq 1$ we get $\esp\big[Z^k\big]<\infty$.
  \end{enumerate}
 \end{proposition}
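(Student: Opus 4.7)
The plan is to adapt the construction used in Proposition \ref{renewal time proposition} to the two-family setting, exploiting the fact that the environments driving $\mathcal{X}^1$ and $\mathcal{X}^2$ are independent. First I would fix $\pi^{1,u_1^1}$ as the reference path and use its sequence of individual renewal times $\{T_j\}_{j\ge 1}$ provided by Corollary \ref{renewal time corollary one path}. For each $j\ge 1$ and each remaining path $\pi^{k,u_i^k}$ with $(k,i)\ne(1,1)$, define the analog of \eqref{s_n},
\begin{align*}
s_j^{k,i}:=\inf\Big\{n\geq 0: X_n^{k,u_i^k}(2)=T_j-K \text{ or } W_{X_n^{k,u_i^k}}^k>\sum_{r=1}^{T_j-K-X_n^{k,u_i^k}(2)}\mathbbm{1}_{\{L(X_n^{k,u_i^k},r)\text{ is open in environment }k\}}\Big\},
\end{align*}
and introduce the events $\widehat{E}_j^{k,i}:=E^k\bigl((X_{s_j^{k,i}}^{k,u_i^k}(1),T_j-K)\bigr)$, where $E^k$ is the event defined in \eqref{E(v)} relative to the $k$-th environment.

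The key step is to show that the simultaneous success events $\mathcal{E}_j:=\bigcap_{(k,i)\ne(1,1)}\widehat{E}_j^{k,i}$ form an i.i.d.\ Bernoulli sequence with a positive parameter depending only on $m_1+m_2$. Every event $\widehat{E}_j^{2,i}$ is a function of the system-$2$ environment only, hence is automatically independent of $\pi^{1,\cdot}$ and of the system-$1$ events $\widehat{E}_j^{1,i}$. For the within-family events $\widehat{E}_j^{1,i}$ with $i\ge 2$, one reproduces verbatim the argument from Proposition \ref{renewal time proposition}: independence from $\pi^{1,u_1^1}$ holds when $X_{s_j^{1,i}}^{1,u_1^i}(1)\ne\pi^{1,u_1^1}(T_j)$, and in the complementary case the two paths have already coalesced so we replace $\widehat{E}_j^{1,i}$ by an independent copy of the same event. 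The strong Markov property of the system-$1$ environment at the stopping times $T_j$, together with independence across the two environments, then yields that $(\mathcal{E}_j)_{j\ge 1}$ is i.i.d., and by the finite-support hypothesis on the $W^k$'s and $\pr[W^k=1]>0$ the common probability $p^*_{m_1+m_2}:=\pr[\mathcal{E}_1]$ is strictly positive.

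Letting $G:=\inf\{j\ge 1:\mathcal{E}_j\text{ occurs}\}$, which is geometric with parameter $p^*_{m_1+m_2}$, I would set $T:=T_G$ and $\tau(u_i^k):=\inf\{m:X_m^{k,u_i^k}(2)=T_G\}$ for every $(k,i)$. By construction, on $\mathcal{E}_G$ each path is forced to climb vertically through $K$ consecutive open sites immediately before time $T_G$, so $\Delta_{\tau(u_i^k)}^k(u_i^k)=\emptyset$ and claim $(i)$ holds, together with the identity $\pi^{k,u_i^k}(T)=X_{\tau(u_i^k)}^{k,u_i^k}(1)$. Translation invariance in space and stationarity of both environments give that the law of $T-u_1^1(2)$ depends only on $m_1+m_2$. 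For the moment bound one mimics part $(iii)$ of Lemma \ref{tpr}: between two consecutive attempts, the time increments of every path are bounded by the corresponding increment of the reference renewal sequence $(T_j-T_{j-1})$, while the spatial displacements of every path are bounded by a cube of the partial sums of these increments. Dominating these by $Z:=\bigl(\sum_{j=1}^{G}(T_j-T_{j-1})\bigr)^3$ and applying Lemma \ref{sum moments} from the appendix yields $\esp[Z^k]<\infty$, hence $\esp[T^k]<\infty$, for all $k\ge 1$.

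The main obstacle, as in Proposition \ref{renewal time proposition}, is guaranteeing genuine independence of the building-block events $\widehat{E}_j^{k,i}$ both across the pairs $(k,i)$ and across different attempts $j$. The novelty here is that the presence of a second family could in principle create new correlations, but the independence of the two environments eliminates all such issues: cross-family coalescence is impossible, so the only non-trivial dependencies are the within-family ones that were already handled by the replacement-by-independent-copy device in Proposition \ref{renewal time proposition}.
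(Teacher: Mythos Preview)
Your proposal is correct and matches the approach the paper indicates: the paper omits the proof of this proposition, stating only that it ``follows the same lines as those of Proposition \ref{renewal time proposition}'', and your argument is precisely that adaptation. The one genuinely new ingredient you needed to supply---that independence of the two environments makes the family-$2$ events $\widehat{E}_j^{2,i}$ automatically independent of the reference renewal structure $\{T_j\}$ and of the family-$1$ events, so that only the within-family correlations already handled by the replacement device in Proposition \ref{renewal time proposition} remain---is identified and handled correctly.
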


\medskip

\begin{proof}[Proof of the Proposition \ref{I}.]
Here we use a non-straightforward adaptation of the idea applied in \cite{coletti2014convergence} to prove condition $I$ for the Drainage Network model. We will consider the case $y_1 = (0,0)$, $y_2 = (1,0)$, ..., $y_m = (m,0)$, the other cases can be carried out in the same way (even when the paths do not start necessarily at the same time). So we are going to prove that for any $m\in\N$,
$$
(\pi^{(0,0)}_n,\pi^{(n\sigma,0)}_n, \dots,\pi^{(mn\sigma,0)}_n)
$$
converges in distribution to a vector of coalescing Brownian motions starting in $(0,0),\dots,(m,0)$ denoted here by $(B^{(0,0)},\dots,B^{(m,0)})$. To simplify the notation we will write $\pi^k := \pi^{(k,0)}$, $k \in \mathbb{Z}$, and $B^x := B^{(x,0)}$ for $x\in\R$. Here for the rescaled paths we use the notation: 
$$
\pi^{k}_n = \frac{\pi^{k \lfloor n \sigma \rfloor}(tn^2\gamma)}{n\sigma} \, .
$$
It is enough to fix an arbitrary $M>0$, suppose that $(B^{0},\dots,B^{m})$ and $(\pi^{0}_n,\pi^{1}_n, \dots,\pi^{m}_n)$ are restricted to time interval $[0,M]$ and prove the convergence, i.e.,
\begin{equation}
\label{eq:limm}
\lim_{n \rightarrow \infty} (\pi^{0}_n(t), \dots,\pi^{m}_n(t))_{0\le t \le M} \overset{d}{=} \big( B^0(t),\dots, B^m(t) \big)_{0\le t \le M} \, .
\end{equation}
By Proposition \ref{one path convergence} we have that 
\begin{align*}
\lim_{n \rightarrow \infty} ( \pi^0(t) )_{0\le t \le M} \overset{d}{=} \big( B^0(t) \big)_{0\le t \le M}.
\end{align*}
The proof will follow from induction in $m$. Let us suppose that 
\begin{align*}
\lim_{n \rightarrow \infty} (\pi^{0}_n(t), \dots,\pi^{(m-1)}_n(t))_{0\le t \le M}\overset{d}{=}\Big(B^0(t),\dots,B^{(m-1)}(t)\Big)_{0\le t \le M}. 
\end{align*} 
The proof of \eqref{eq:limm} from the induction hypothesis will be based on coupling techniques. We will build a path $\overline{\pi}^{m}_n$ which is independent of $(\pi^{0}_n,\pi^{1}_n, \dots,\pi^{(m-1)}_n)$ until coalescence with one of them, has the same distribution of $\pi^{m}$ and such that, in a proper way, $\pi_n^m$ and $\overline{\pi}^m_n$ are close to each other.  

We start constructing paths $\widetilde{\pi}^{0}$, ... , $\widetilde{\pi}^{(m-1)\lfloor n\sigma \rfloor}$ and $\widehat{\pi}^{m \lfloor n\sigma \rfloor}$ that are coupled to $(\pi^{0},\dots,\pi^{(m-1) \lfloor n\sigma \rfloor},\pi^{m \lfloor n\sigma \rfloor})$ in a way that they coincide until one of the latter paths moves a distance at least $n^{\frac{3}{4}}$ from its last position on the last renewal time. The idea is to replace sections of the environment by the same sections of an independent environment and then build the paths $\widetilde{\pi}^{0}$, ... , $\widetilde{\pi}^{(m-1)\lfloor n\sigma \rfloor}$ and $\widehat{\pi}^{m \lfloor n\sigma \rfloor}$ on the new concatenated environment. We suggest the reader to see Figure 6 although some definitions are still missing. The construction follows by induction:

\medskip

\noindent \textbf{Step 1:} Let $\{\widetilde{U}_v:v\in\Z^2\}$ and $\{\widehat{U}_v:v\in\Z^2\}$ be i.i.d. families of uniform random variables in $[0,1]$; $\{\widetilde{W}_v:v\in\Z^2\}$ and $\{\widehat{W}_v:v\in\Z^2\}$ be i.i.d families of random variables with the same distribution of $W_{(0,0)}$; independent of each other and of $\{U_v:v\in\Z^2\}$ and $\{W_v:v\in\Z^2\}$. Using them let us define the random variables $\{\widetilde{U}^1_v:v\in\Z^2\}$, $\{\widehat{U}^1_v:v\in\Z^2\}$, $\{\widetilde{W}^1_v:v\in\Z^2\}$ and $\{\widehat{W}^1_v:v\in\Z^2\}$ as follows:
\begin{equation*}
                \widehat{U}_v^1:=
                  \left\lbrace
                  \begin{array}{l}
                     U_v; \text{ if } |v(1)-mn\sigma|\leq n^{\frac{3}{4}} \text{ and } 0<v(2)\leq n^{\frac{3}{4}}; \\
                     \widehat{U}_v; \text{ otherwise},
                  \end{array}
                  \right. 
\end{equation*}
\begin{equation*}
                \widehat{W}_v^1:=
                  \left\lbrace
                  \begin{array}{l}
                     W_v; \text{ if } |v(1)-mn\sigma|\leq n^{\frac{3}{4}} \text{ and } 0<v(2)\leq n^{\frac{3}{4}}; \\
                     \widehat{W}_v; \text{ otherwise},
                  \end{array}
                  \right. 
\end{equation*}
\begin{equation*}
                \widetilde{W}_v^1:=
                  \left\lbrace
                  \begin{array}{l}
                     W_v; \text{ if } v(1)\leq (m-1)n\sigma + n^{\frac{3}{4}} \text{ and } 0<v(2)\leq n^{\frac{3}{4}} ;\\
                     \widetilde{W}_v; \text{ otherwise},
                  \end{array}
                  \right. 
\end{equation*}
and 
\begin{equation*}
                \widetilde{U}_v^1:=
                  \left\lbrace
                  \begin{array}{l}
                     U_v; \text{ if } v(1)\leq (m-1)n\sigma + n^{\frac{3}{4}} \text{ and } 0<v(2)\leq n^{\frac{3}{4}} ;\\
                     \widetilde{U}_v; \text{ otherwise}.
                  \end{array}
                  \right. 
\end{equation*}
Use $\{\widetilde{U}_v^1:v\in\Z^2\}$, $\{\widetilde{W}^1_v:v\in\Z^2\}$ to construct paths $\{\widetilde{\pi}^{0},\dots,\widetilde{\pi}^{(m-1)\lfloor n\sigma \rfloor}\}$ of the GRDF (not rescaled) starting respectively in $0,\lfloor n\sigma \rfloor,\dots, (m-1) \lfloor n\sigma \rfloor$ at time zero. Also use the families $\{\widehat{U}_v^1:v\in\Z^2\}$, $\{\widehat{W}^1_v:v\in\Z^2\}$ to construct a path $\widehat{\pi}^{m\lfloor n\sigma\rfloor}$ of the GRDF (not rescaled) starting at $m\lfloor n\sigma\rfloor$ at time zero, except for a coalescence rule imposing that when $\widehat{\pi}^{m\lfloor n\sigma\rfloor}$ meets one of $\widetilde{\pi}^{j\lfloor n\sigma \rfloor}$, $0\le j \le m-1$, then they coalesce following both the single path $\widetilde{\pi}^{j\lfloor n\sigma \rfloor}$. Let $T_1$ and $Z_1$ be the random variables associated to $\{\widetilde{\pi}^{0},\dots,\widetilde{\pi}^{(m-1)\lfloor n\sigma \rfloor},\widehat{\pi}^{m \lfloor n\sigma \rfloor} \}$ by Proposition \ref{renewals times version}. Note that on the event $\{Z_1\leq n^{\frac{3}{4}}\}$ the vector paths $(\widetilde{\pi}^{0},\dots,\widetilde{\pi}^{(m-1) \lfloor n\sigma \rfloor},\widehat{\pi}^{m \lfloor n\sigma \rfloor})$ coincide with $(\pi^{0},\dots,\pi^{m \lfloor n\sigma \rfloor})$  up to time $T_1 \le Z_1 \le n^{\frac{3}{4}}$. This ends Step 1.

\smallskip

\noindent \textbf{Step 2:} At time $T_1$ the environment on times $t > T_1$ is not known, so we can use other environmental random variables to extend GRDF paths after time $T_1$. So from time $T_1$, we define new iid families $\{\widetilde{U}^2_v:v\in\Z^2\}$, $\{\widehat{U}^2_v:v\in\Z^2\}$, $\{\widetilde{W}^2_v:v\in\Z^2\}$ and $\{\widehat{W}^2_v:v\in\Z^2\}$ independent of anything else as follows:
\begin{equation*}
                \widehat{U}_v^2:=
                  \left\lbrace
                  \begin{array}{l}
                     U_v; \text{ if } |v(1)-\widehat{\pi}^{1,m\lfloor n\sigma\rfloor}(T_1)|\leq n^{\frac{3}{4}} \text{ and } T_1<v(2)\leq T_1+n^{\frac{3}{4}}; \\
                     \widehat{U}_v; \text{ otherwise},
                  \end{array}
                  \right. 
\end{equation*}
\begin{equation*}
                \widehat{W}_v^2:=
                  \left\lbrace
                  \begin{array}{l}
                     W_v; \text{ if } |v(1)-\widehat{\pi}^{1,m\lfloor n\sigma\rfloor}(T_1)|\leq n^{\frac{3}{4}} \text{ and } T_1<v(2)\leq T_1+n^{\frac{3}{4}}; \\
                     \widehat{W}_v; \text{ otherwise},
                  \end{array}
                  \right. 
\end{equation*}
\begin{equation*}
                \widetilde{W}_v^2:=
                  \left\lbrace
                  \begin{array}{l}
                     W_v; \text{ if } v(1)\leq \max_{0\leq j\leq m-1}\widetilde{\pi}^{1,j\lfloor n\sigma\rfloor}(T_1)+ n^{\frac{3}{4}} \text{ and } T_1<v(2)\leq T_1 + n^{\frac{3}{4}} ;\\
                     \widetilde{W}_v; \text{ otherwise},
                  \end{array}
                  \right. 
\end{equation*}
and 
\begin{equation*}
                \widetilde{U}_v^2:=
                  \left\lbrace
                  \begin{array}{l}
                     U_v; \text{ if } v(1)\leq \max_{0\leq j\leq m-1}\widetilde{\pi}^{1,j\lfloor n\sigma\rfloor}(T_1)+ n^{\frac{3}{4}} \text{ and } T_1<v(2)\leq T_1 + n^{\frac{3}{4}} ;\\
                     \widetilde{U}_v; \text{ otherwise}.
                  \end{array}
                  \right. 
\end{equation*}
First set $\widetilde{\pi}^{2,0},\widetilde{\pi}^{2,\lfloor n\sigma \rfloor},\dots,\widetilde{\pi}^{2,(m-1)\lfloor n\sigma \rfloor}$ starting respectively at $\widetilde{\pi}^{0}(T_1)$, $\widetilde{\pi}^{\lfloor n\sigma \rfloor}(T_1)$, $\dots,$ $\widetilde{\pi}^{(m-1)\lfloor n\sigma \rfloor}(T_1)$ and using the environment $\{\widetilde{U}_v^2:v\in\Z^2\}$ ,$\{\widetilde{W}^2_v:v\in\Z^2\}$. If $\widehat{\pi}^{m\lfloor n\sigma\rfloor}$ has coalesced to $\widetilde{\pi}^{j\lfloor n\sigma \rfloor}$, $0\le j \le m-1$, set $\widehat{\pi}^{2,m\lfloor n\sigma\rfloor} = \widetilde{\pi}^{2,j\lfloor n\sigma \rfloor}$, otherwise $\widehat{\pi}^{m\lfloor n\sigma\rfloor}(T_1) \neq \widetilde{\pi}^{j\lfloor n\sigma \rfloor}(T_1)$ for $0\le j \le m-1$ and set $\widehat{\pi}^{2,m\lfloor n\sigma\rfloor}$ as the GRDF path starting at $\widehat{\pi}^{m\lfloor n\sigma\rfloor}(T_1)$ at time $T_1$ using the environment $\{\widehat{U}_v^2:v\in\Z^2\}$, $\{\widehat{W}^2_v:v\in\Z^2\}$, except for a coalescence rule imposing that when $\widehat{\pi}^{2,m\lfloor n\sigma\rfloor}$ meets one of $\widetilde{\pi}^{2,j \lfloor n\sigma \rfloor}$, $0\le j \le m-1$, then they coalesce following both the single path $\widetilde{\pi}^{2,j \lfloor n\sigma \rfloor}$. Again we have random variables $T_2$ and $Z_2$ for these paths as in Proposition \ref{renewals times version} and on the event $\{\max (Z_1,Z_2) \leq n^{\frac{3}{4}}\}$ the vector $(\widetilde{\pi}^{0},\dots,\widetilde{\pi}^{(m-1) \lfloor n\sigma \rfloor},\widehat{\pi}^{m \lfloor n\sigma \rfloor})$ coincide with $(\pi^{0},\dots,\pi^{m \lfloor n\sigma \rfloor})$  up to time $T_2 \le Z_1 + Z_2 \le 2 n^{\frac{3}{4}}$. Redefine, if necessary, $(\widetilde{\pi}^{0},\widetilde{\pi}^{\lfloor n\sigma \rfloor},\dots,\widetilde{\pi}^{(m-1)\lfloor n\sigma \rfloor})$ as $(\widetilde{\pi}^{2,0},\widetilde{\pi}^{2,\lfloor n\sigma \rfloor},\dots,\widetilde{\pi}^{2,(m-1)\lfloor n\sigma \rfloor})$ on time interval $T_1 < t \le T_2$. This ends Step 2.

\smallskip

We continue step by step replicating recursively Step k from Step k-1. We get $(T_k)_{k\geq 1}$, $(Z_k)_{k\geq 1}$ and $\{\widetilde{\pi}^{k,0},\widetilde{\pi}^{k,\lfloor n\sigma\rfloor},\dots,\widetilde{\pi}^{k,(m-1)\lfloor n\sigma\rfloor},\widehat{\pi}^{k,m\lfloor n\sigma\rfloor}\}$ for $k\geq 1$ such that on the event $\{\max (Z_1,...,Z_k) \leq n^{\frac{3}{4}}\}$ the vector $(\widetilde{\pi}^{0},\dots,\widetilde{\pi}^{(m-1) \lfloor n\sigma \rfloor},\widehat{\pi}^{m \lfloor n\sigma \rfloor})$ coincide with $(\pi^{0},\dots,\pi^{m \lfloor n\sigma \rfloor})$  up to time $T_k \le \sum_{j=1}^k Z_j \le k n^{\frac{3}{4}}$. 

\medskip

Let us define a version $\overline{\pi}^{m\lfloor n\sigma\rfloor}$ of $\widehat{\pi}^{m\lfloor n\sigma\rfloor}$ which is independent of $(\widetilde{\pi}^{0},\dots,\widetilde{\pi}^{(m-1)\lfloor n\sigma\rfloor})$ until coalescence and coincide with $\widehat{\pi}^{m\lfloor n\sigma\rfloor}$ until this latter path gets to a distance of at least $2 n^{3/4}$ of $(\widetilde{\pi}^{0},\dots,\widetilde{\pi}^{(m-1)\lfloor n\sigma\rfloor})$. Consider the following stopping time 
\begin{align*}
    \zeta:=\inf\Big\{k\geq 1: \max_{0\leq j\leq m-1}|\widehat{\pi}^{m\lfloor n\sigma\rfloor}(T_k)-\widetilde{\pi}^{j\lfloor n\sigma\rfloor}(T_k)|\leq 2n^{\frac{3}{4}} \Big\}.
\end{align*}
Define $\overline{\pi}^{m\lfloor n\sigma\rfloor}(t)=\widehat{\pi}^{m\lfloor n\sigma\rfloor}(t)$ for $0\leq t\leq T_\zeta$, see Figure 6. From time $T_\zeta$ and before coalescence with some $\widetilde{\pi}^{0},\dots,\widetilde{\pi}^{(m-1)\lfloor n\sigma\rfloor}$, we have that $\overline{\pi}^{m\lfloor n\sigma\rfloor}(t)$ evolves only through the environment $(\{\widehat{U}_v:v\in\Z^2\},\{\widehat{W}_v:v\in\Z^2\})$ as the path starting in $\widehat{\pi}^{m\lfloor n\sigma\rfloor}(T_{\zeta})$ at time $T_{\zeta}$. By coalescence we mean again that when $\overline{\pi}^{m\lfloor n\sigma\rfloor}(t)$ meets a path one of $\widetilde{\pi}^{j\lfloor n\sigma \rfloor}$, $0\le j \le m-1$, then they coalesce following both the single path $\widetilde{\pi}^{j\lfloor n\sigma \rfloor}$. Let 
\begin{align*}
    \widetilde{\pi}^j_n(t):=\frac{\widetilde{\pi}^{j\lfloor n\sigma\rfloor}(tn^2\gamma)}{n\sigma} \text{ for } j=0,\dots,m-1 \, ,
\end{align*}
\begin{align*}
    \widehat{\pi}^m_n(t):=\frac{\widehat{\pi}^{m\lfloor n\sigma\rfloor}(tn^2\gamma)}{n\sigma} \quad and \quad \overline{\pi}^{m}_n(t):= \frac{\overline{\pi}^{m\lfloor n\sigma\rfloor}(tn^2\gamma)}{n\sigma}
\end{align*}
be the rescaled versions of the constructed paths. 


\medskip
\begin{figure}
\label{fig:acoplamento}
\begin{overpic}[scale = 1]
{coupling2.eps}
\put(-5,6){ \tiny{$T_1$}} \put(-5,12){\tiny{ $T_2$}} \put(-5,17){\tiny{ $T_3$}} \put(-5,22){\tiny{ $T_{\zeta}$}}
\put(17,-2.5){\tiny{ $0$}} \put(33,-2.5){ \tiny{$\lfloor n\sigma\rfloor$}}
\put(51,-2.5){ \tiny{$2\lfloor n\sigma\rfloor$}} \put(72,-2.5){\tiny{ $3\lfloor n\sigma\rfloor$}}
\put(60.5,23){ \tiny{$\leq 2n^{3/4}$}}
\end{overpic}
\medskip
\caption{Here m=4 and we consider the GRDF paths $\pi^{0}$, $\pi^{\lfloor n\sigma \rfloor}$, $\pi^{2 \lfloor n\sigma \rfloor}$ and $\pi^{3 \lfloor n\sigma \rfloor}$. Recall that $T_0= 0$. In the picture $\pi^{3 \lfloor n\sigma \rfloor}$ is confined in the union of the drawn rectangles $[\pi^{3 \lfloor n\sigma \rfloor}(T_{j-1}) - n^{\frac{3}{4}},\pi^{3 \lfloor n\sigma \rfloor}(T_{j-1}) + n^{\frac{3}{4}}]\times [T_{j-1},T_j], j=1,\dots,4 $;  so $\pi^{3 \lfloor n\sigma \rfloor}$ remains at distance $n^{\frac{3}{4}}$ of its position on the previous renewal time. The paths $\pi^{0}$, $\pi^{\lfloor n\sigma \rfloor}$, $\pi^{2 \lfloor n\sigma \rfloor}$  are confined in the union of the drawn semi-infinite rectangles 
$[-\infty, \max_{l=0,1,2} \pi^{l \lfloor n\sigma \rfloor}(T_{j-1}) + n^{\frac{3}{4}}]\times [T_{j-1},T_j], j=1,\dots,4$; so
none of $\pi^{0}$, $\pi^{\lfloor n\sigma \rfloor}$, $\pi^{2 \lfloor n\sigma \rfloor}$ go beyond $n^\frac{3}{4}$ to the right of their rightmost position at the previous renewal time. We are also supposing that $T_4$ is the first renewal time such that $|\max_{l=0,1,2} \pi^{l \lfloor n\sigma \rfloor}(T_{j-1}) - \pi^{3 \lfloor n\sigma \rfloor}(T_{j-1})| \le 2 n^{\frac{3}{4}}$, thus $\zeta = 4$ and before time $T_4$ we have that $(\pi^{0},\pi^{\lfloor n\sigma \rfloor},\pi^{2 \lfloor n\sigma \rfloor},\pi^{3 \lfloor n\sigma \rfloor})$ coincide with  $(\widetilde{\pi}^{0},\widetilde{\pi}^{\lfloor n\sigma \rfloor},\widetilde{\pi}^{2 \lfloor n\sigma \rfloor},\overline{\pi}^{3 \lfloor n\sigma \rfloor})$.}
\end{figure}

\bigskip

\begin{remark}\label{coupling details} We point out that as a direct consequence of the definitions the following properties are satisfied:
\begin{enumerate}
\item[(i)] Before coalescence, the path $\overline{\pi}^{m}_n$ is independent of $\widetilde{\pi}^0_n,\dots,\widetilde{\pi}^{(m-1)}_n$.
\item[(ii)] For $s\le M$, on the event 
$$
\mathcal{A}_{n,s}:=\{T_{\zeta}>n^2\gamma s\},
$$ 
we have that $\widehat{\pi}_n^{m} (t) =\overline{\pi}_n^{m} (t)$ for every $0\le t \le s$.
\item[(iii)] From the induction hypothesis, item $(i)$, Proposition \ref{one path convergence} and the convergence result for coalescing random walks in \cite{newman2005convergence} we get
\begin{align*}
\lim_{n\rightarrow\infty}\big(\widetilde{\pi}^0_n,\dots,\widetilde{\pi}^{(m-1)}_n,\overline{\pi}^{m}_n\big)\overset{d}{=}\big(B^0,\dots,B^m\big).
\end{align*} 
We leave the details of this verification to the reader, but the point here is that, by the conditioning on the events that bounds the sizes of jumps by $n^{3/4}$ and whose probabilities converge to one as $n\rightarrow \infty$, we have that $\overline{\pi}^{m}_n$ will coalesce with $\widetilde{\pi}_n^{(m-1)}$ before $\widetilde{\pi}^{j}_n$, $1\le j \le m-2$, with probability that also goes to one. Moreover, the independence from $(i)$, Proposition \ref{one path convergence} and the convergence result for coalescing random walks in \cite{newman2005convergence} implies that $(\widetilde{\pi}_n^{(m-1)},\overline{\pi}^{m}_n)$ converges to a pair of coalescing Brownian motions. To finish, we only need to use the induction hypothesis.
\item[(iv)] On the event 
$$
\mathcal{B}_{n,M}:=\cap_{k=1}^{\lfloor Mn^2\gamma \rfloor +1}\{Z_k\leq n^{\frac{3}{4}}\}
$$ 
the vector of paths  $(\widetilde{\pi}^{0},\dots,\widetilde{\pi}^{(m-1)},\widehat{\pi}^{m})$ coincide with $(\pi^{0},\dots,\pi^{m})$  up to a time greater than $Mn^2\gamma$.
\item[(v)] Also on $\mathcal{B}_{n,M}$, if $|\widehat{\pi}^{m\lfloor n\sigma\rfloor}(t)-\widetilde{\pi}^{j\lfloor n\sigma\rfloor}(t)| \le 2n^{\frac{3}{4}}$ for some $0 \le j \le m-1$ and $t > 0$ then either there exist some $k$ such that $T_k < t$
and $\zeta \le k$ or $\widehat{\pi}^{m\lfloor n\sigma\rfloor}$ and $\widetilde{\pi}^{j\lfloor n\sigma\rfloor}$ cannot coalesce or cross each other before or at time $s = \max \{T_k: \, T_k < t\}$. This follows from the fact that on $\mathcal{B}_{n,M}$ the paths do not move by more than $n^{\frac{3}{4}}$ between renewal times, therefore if $\widehat{\pi}^{m\lfloor n\sigma\rfloor}$ and $\widetilde{\pi}^{j\lfloor n\sigma\rfloor}$ coalesce or cross each other, then they will be at distance at most $2n^{\frac{3}{4}}$ on the next renewal time.
\end{enumerate}
\end{remark}  

\smallskip

\begin{claim}\label{B_{n,M}}
For the event $\mathcal{B}_{n,M}$ as in Remark \ref{coupling details} we have that $\lim_{n \rightarrow \infty} \pr\big[\mathcal{B}_{n,M}^c\big]=0$.
\end{claim}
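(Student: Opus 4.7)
The plan is a straightforward union bound combined with a moment inequality. By construction, at each step $k$ of the iterative coupling the environment used from time $T_{k-1}$ to time $T_k$ is drawn independently of the previously used environments, so the sequence $(Z_k)_{k \ge 1}$ is i.i.d.\ with a common distribution that, by Proposition \ref{renewals times version} (iii), has finite moments of all orders. In particular, fix any integer $r$ with $3r/4 > 2$ (for instance $r = 3$); then $\esp[Z_1^{r}] < \infty$.

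First I would apply a union bound:
\begin{align*}
\pr\big[\mathcal{B}_{n,M}^c\big]
\;\le\; \sum_{k=1}^{\lfloor Mn^2\gamma\rfloor + 1} \pr\big[Z_k > n^{3/4}\big]
\;=\; (\lfloor Mn^2\gamma\rfloor + 1)\, \pr\big[Z_1 > n^{3/4}\big].
\end{align*}
Then I would estimate the single-event probability via Markov's inequality:
\begin{align*}
\pr\big[Z_1 > n^{3/4}\big] \;\le\; \frac{\esp[Z_1^{r}]}{n^{3r/4}}.
\end{align*}
Combining the two displays,
\begin{align*}
\pr\big[\mathcal{B}_{n,M}^c\big] \;\le\; \frac{(\lfloor Mn^2\gamma\rfloor + 1)\,\esp[Z_1^{r}]}{n^{3r/4}},
\end{align*}
which tends to zero as $n \to \infty$ because $3r/4 > 2$.

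There is really no serious obstacle here, since all the work is hidden in Proposition \ref{renewals times version}, which guarantees both the i.i.d.\ structure of $(Z_k)_{k\ge 1}$ and the finiteness of arbitrary moments. The only point deserving a brief comment is independence: at Step $k$ the random variable $Z_k$ is a functional of the freshly drawn environment variables $\{\widetilde U_v^k, \widehat U_v^k, \widetilde W_v^k, \widehat W_v^k\}$ on the slab $T_{k-1} < v(2) \le T_{k-1} + n^{3/4}$, which by construction are independent of everything used before time $T_{k-1}$, so $(Z_k)_{k\ge 1}$ is indeed an i.i.d.\ sequence, justifying the bound above.
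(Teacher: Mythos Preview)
Your proof is correct and follows essentially the same approach as the paper: a union bound over the $\lfloor Mn^2\gamma\rfloor+1$ events, followed by Markov's inequality using a moment of $Z_1$ high enough (the paper takes the fourth moment, you take the third) so that the resulting power of $n$ in the denominator beats the $n^2$ in the numerator. One minor remark: the equality $\sum_k \pr[Z_k>n^{3/4}]=(\lfloor Mn^2\gamma\rfloor+1)\pr[Z_1>n^{3/4}]$ only requires identical distribution of the $Z_k$'s, not independence, so your closing justification is a bit more than needed.
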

\begin{proof}
Note that
\begin{align*} 
\pr\big[\mathcal{B}_{n,M}^c\big]\leq (Mn^2\gamma +1)\pr\big[Z_1>n^{\frac{3}{4}}\big] \leq \frac{(Mn^2\gamma +1)\esp[Z_1^4]}{n^{3}}   
\end{align*}
which goes to zero as $n$ goes to infinity.
\end{proof}

\smallskip

Let $C([0,M],\mathbb{R}^{m+1})$ be the space of continuous $\mathbb{R}^{m+1}$-valued functions with domain $[0,M]$ endowed with the uniform topology, and fix an uniformly continuous function $H:C([0,M],\mathbb{R}^{m+1})\rightarrow \R$. We need to prove that 
\begin{align*}
\lim_{n \rightarrow \infty}  \esp\big[H\big(\pi^0_n,\dots,\pi^{m}_n\big)\big]= \esp\big[H\big(B^0,\dots,B^m\big)\big]. 
\end{align*}
By (iv) in Remark \ref{coupling details} and Claim \ref{B_{n,M}} we have that
\begin{align}\label{eq:Hconv1}
    \esp\Big[\big|H\big(\pi^0_n,\dots,\pi^{m}_n\big)-H\big(\widetilde{\pi}^0_n,\dots,\widetilde{\pi}^{(m-1)}_n,\widehat{\pi}^{m}_n\big)\big|\Big]\leq 2||H||_{\infty}\pr\big[\mathcal{B}^c_{n,M}\big]\rightarrow 0 
\end{align}
as n goes to infinity. 
By (iii) in Remark \ref{coupling details} and the induction hypothesis we have that 
\begin{align}\label{eq:Hconv2}
    \esp\Big[H\big(\widetilde{\pi}^0_n,\dots,\widetilde{\pi}^{(m-1)}_n,\overline{\pi}^{m}_n\big)\Big]\rightarrow \esp\Big[H\big(B^0,\dots,B^m\big)\Big]. 
\end{align}
Now write
\begin{align*}
    &\Big|\esp\big[H\big(\pi^0_n,\dots,\pi^{m}_n\big)\big]-\esp\big[H\big(B^0,\dots,B^m\big)\big]\Big|\\&\leq \esp\Big[\big|H\big(\pi^0_n,\dots,\pi^{m}_n\big)-H\big(\widetilde{\pi}^0_n,\dots,\widetilde{\pi}^{(m-1)}_n,\widehat{\pi}^{m}_n\big)\big|\Big]\\&+\esp\Big[\big|H\big(\widetilde{\pi}^0_n,\dots,\widetilde{\pi}^{(m-1)}_n,\widehat{\pi}^{m}_n\big)-H\big(\widetilde{\pi}^0_n,\dots,\widetilde{\pi}^{(m-1)}_n,\overline{\pi}^{m}_n\big)\big|\Big] \\&+ \Big|\esp\Big[H\big(\widetilde{\pi}^0_n,\dots,\widetilde{\pi}^{(m-1)}_n,\overline{\pi}^{m}_n\big)-H\big(B^0,\dots,B^m\big)\Big]\Big| \, ,
\end{align*}
then, from \eqref{eq:Hconv1} and \eqref{eq:Hconv2}, it is enough to prove that 
\begin{align*}
\lim_{n \rightarrow \infty}  \esp\Big[\big|H\big(\widetilde{\pi}^0_n,\dots,\widetilde{\pi}^{(m-1)}_n,\widehat{\pi}^{m}_n\big)-H\big(\widetilde{\pi}^0_n,\dots,\widetilde{\pi}^{(m-1)}_n,\overline{\pi}^{m}_n\big)\big|\Big] = 0 \, .
\end{align*}
Using (ii) and (iv) in Remark \ref{coupling details} we obtain
\begin{align*} &\esp\Big[\big|H\big(\widetilde{\pi}^0_n,\dots,\widetilde{\pi}^{(m-1)}_n,\widehat{\pi}^{m}_n\big)-H\big(\widetilde{\pi}^0_n,\dots,\widetilde{\pi}^{(m-1)}_n,\overline{\pi}^{m}_n\big)\big|\Big]\\&= \esp\Big[\big|H\big(\widetilde{\pi}^0_n,\dots,\widetilde{\pi}^{(m-1)}_n,\widehat{\pi}^{m}_n\big)-H\big(\widetilde{\pi}^0_n,\dots,\widetilde{\pi}^{(m-1)}_n,\overline{\pi}^{m}_n\big)\big|\mathbbm{1}_{\mathcal{A}_{n,M}^c}\Big]\\&\leq \esp\Big[\big|H\big(\pi^0_n,\dots,\pi^{(m-1)}_n,\pi^{m}_n\big)-H\big(\pi^0_n,\dots,\pi^{(m-1)}_n,\overline{\pi}^{m}_n\big)\big|\mathbbm{1}_{\mathcal{A}_{n,M}^c}\mathbbm{1}_{\mathcal{B}_{n,M}}\Big] +2||H||_{\infty}\pr\big[\mathcal{B}_{n,M}^c\big]. 
\end{align*}
Again, by Claim \ref{B_{n,M}} we just have to prove that
\begin{align*}
\lim_{n \rightarrow \infty} \esp\Big[\big|H\big(\pi^0_n,\dots,\pi^{(m-1)}_n,\pi^{m}_n\big)-H\big(\pi^0_n,\dots,\pi^{(m-1)}_n,\overline{\pi}^{m}_n\big)\big|\mathbbm{1}_{\mathcal{A}_{n,M}^c}\mathbbm{1}_{\mathcal{B}_{n,M}}\Big] = 0 \, .
\end{align*}
In order to stablish the above convergence, we need to define some stopping times. For $j=\{0,\dots,m-1\}$ consider \begin{align*}
    \zeta_j:=\inf\{k\geq 1: |\pi^{j\lfloor n\sigma\rfloor}(T_k)-\pi^{m\lfloor n\sigma\rfloor}(T_k)|\leq 2n^{\frac{3}{4}}\} \, ,
\end{align*}
where the definition is based on (v) in Remark \ref{coupling details} which implies that on $\mathcal{B}_{n,M}$ we only need to consider approximation between paths on the renewal times.
Then 
\begin{align*}
   & \esp\Big[\big|H\big(\pi^0_n,\dots,\pi^{(m-1)}_n,\pi^{m}_n\big)-H\big(\pi^0_n,\dots,\pi^{(m-1)}_n,\overline{\pi}^{m}_n\big)\big|\mathbbm{1}_{\mathcal{A}_{n,M}^c}\mathbbm{1}_{\mathcal{B}_{n,M}}\Big]\\&\leq\sum_{j=0}^{m-1} \esp\Big[\big|H\big(\pi^0_n,\dots,\pi^{(m-1)}_n,\pi^{m}_n\big)-H\big(\pi^0_n,\dots,\pi^{(m-1)}_n,\overline{\pi}^{m}_n\big)\big|\mathbbm{1}_{\mathcal{A}_{n,M}^c}\mathbbm{1}_{\mathcal{B}_{n,M}}\mathbbm{1}_{\{\zeta=\zeta_j\}}\Big].
\end{align*}
Given $\epsilon>0$, since $H$ is uniformly continuous, there exists $\delta_{\epsilon}>0$ such that: if
$\|f-g\|_{\infty} \leq \delta_{\epsilon}$, for $f, \, g \in C([0,M],\mathbb{R}^{m+1})$, then $\big|H(f)-H(g)\big|\leq \epsilon$. So, if 
$$
\sup_{0\leq t\leq M}|\pi^{m}_n(t)-\overline{\pi}^{m}_n(t)|\leq \delta_{\epsilon}
$$
we get
$$
\Big| H\big(\pi^0_n,\dots,\pi^{(m-1)}_n,\pi^{m}_n\big)-H\big(\pi^0_n,\dots,\pi^{(m-1)}_n,\overline{\pi}^{m}_n\big)\Big|\leq \epsilon. 
$$
 To simplify the notation let us denote $D_{n,j}:=\mathcal{A}_{n,M}^c\cap\mathcal{B}_{n,M}\cap\{\zeta=\zeta_j\}$. For $j=0,\dots,m-1$ we have that
\begin{align*}
    &\esp\Big[\big|H\big(\pi^0_n,\dots,\pi^{(m-1)}_n,\pi^{m}_n\big)-H\big(\pi^0_n,\dots,\pi^{(m-1)}_n,\overline{\pi}^{m}_n\big)\big|\mathbbm{1}_{D_{n,j}}\Big]\\& \leq\epsilon +2||H||_{\infty}\pr\Big[D_{n,j}\cap\{\sup_{0\leq t\leq M}|\pi^{m}_n(t)-\overline{\pi}^{m}_n(t)|>\delta_{\epsilon}\}\Big]\\&=\epsilon +2||H||_{\infty}\pr\Big[D_{n,j}\cap\{\sup_{0\leq t\leq Mn^2\gamma}|\pi^{m\lfloor n\sigma\rfloor}(t)-\overline{\pi}^{m\lfloor n\sigma\rfloor}(t)|>n\sigma\delta_{\epsilon}\}\Big].
\end{align*}
To control the uniform distance between $\pi^{m\lfloor n\sigma\rfloor}$ and $\overline{\pi}^{m\lfloor n\sigma\rfloor}$ on $[0,Mn^2\gamma]$ given the event $D_{n,j}$, we use the fact that both processes will coalesce with $\pi^{j \lfloor n\sigma\rfloor}$ on an interval of time after $T_\zeta$ that is negligible under diffusive scaling. So we need to introduce the random times
$$
\tau^j:=\inf\{t>0: \pi^{j\lfloor n\sigma\rfloor}(s)=\pi^{m\lfloor n\sigma\rfloor}(s), \ \forall s\geq t\}
$$
and 
$$
\overline{\tau}^j:=\inf\{t>0: \pi^{j\lfloor n\sigma\rfloor}(s)=\overline{\pi}^{m\lfloor n\sigma\rfloor }(s), \ \forall s\geq t\}.
$$
for $j=0,\dots,m-1$. Fix some $\beta\in(\frac{3}{2},2)$. Then for $j=0,\dots,m-1$ and $n$ large enough 
$$
\pr\Big[D_{n,j}\cap\{\sup_{0\leq t\leq Mn^2\gamma}|\pi^{m\lfloor n\sigma\rfloor}(t)-\overline{\pi}^{m\lfloor n\sigma\rfloor}(t)|>n\sigma\delta_{\epsilon}\}\Big]
$$    
is bounded above by        
\begin{align}   
\label{eq:nujs}
    &\pr\Big[D_{n,j}\cap\{\sup_{0\leq t\leq Mn^2\gamma}|\pi^{m\lfloor n\sigma\rfloor}(t)-\overline{\pi}^{m\lfloor n\sigma\rfloor}(t)|>n\sigma\delta_{\epsilon}\}\cap\{\tau^j,\overline{\tau}^j\in [T_{\zeta},T_{\zeta}+n^{\beta}\gamma]\}\Big] \nonumber \\
    &+\pr\Big[D_{n,j}\cap\{\tau^j> T_{\zeta}+n^{\beta}\gamma]\}\Big]+\pr\Big[D_{n,j}\cap\{\overline{\tau}^j> T_{\zeta}+n^{\beta}\gamma]\}\Big].
\end{align}
The first probability in \eqref{eq:nujs} is equal to 
$$
\pr\Big[D_{n,j}\cap\{\sup_{T_{\zeta_j}\leq t\leq Mn^2\gamma\wedge (T_{\zeta_j}+n^{\beta}\gamma)}|\pi^{m}_n(t)-\overline{\pi}^{m}_n(t)|>n\sigma\delta_{\epsilon}\}\cap\{\tau^j,\overline{\tau}^j\in [T_{\zeta_j},T_{\zeta_j}+n^{\beta}\gamma]\}\Big]
$$
which is bounded above by
\begin{align*}
&\pr\Big[D_{n,j}\cap\{\sup_{T_{\zeta_j}\leq t\leq Mn^2\gamma\wedge (T_{\zeta_j}+n^{\beta}\gamma)}|\pi^{m}_n(t)-\pi^{m}_n(T_{\zeta_j})|>\frac{n\sigma\delta_{\epsilon}}{2}\}\cap\{\tau^j,\overline{\tau}^j\in [T_{\zeta_j},T_{\zeta_j}+n^{\beta}\gamma]\}\Big]\\&+\pr\Big[D_{n,j}\cap\{\sup_{T_{\zeta_j}\leq t\leq Mn^2\gamma\wedge (T_{\zeta_j}+n^{\beta}\gamma)}|\overline{\pi}^{m}_n(t)-\overline{\pi}^{m}_n(T_{\zeta_j})|>\frac{n\sigma\delta_{\epsilon}}{2}\}\cap\{\tau^j,\overline{\tau}^j\in [T_{\zeta_j},T_{\zeta_j}+n^{\beta}\gamma]\}\Big] \\&
\le \pr\Big[\sup_{0\leq t\leq n^{\beta}\gamma}|\pi^{m\lfloor n\sigma\rfloor}(t)-\pi^{m\lfloor n\sigma\rfloor}(0)|>\frac{n\sigma\delta_{\epsilon}}{2}\Big]+\pr\Big[\sup_{0\leq t\leq n^{\beta}\gamma}|\overline{\pi}^{m\lfloor n\sigma\rfloor}(t)-\overline{\pi}^{m\lfloor n\sigma\rfloor}(0)|>\frac{n\sigma\delta_{\epsilon}}{2}\Big]
\, ,
\end{align*} 
where for the inequality we have used the Markov property on the renewal times. Both terms in the right hand side of the previous inequality are bounded above by
$$
\pr\Big[\sup_{0\leq t\leq n^{\beta}\gamma}\frac{|\pi^0(t)-\pi^0(0)|}{\sigma n^{\frac{\beta}{2}}}> \frac{n^{1-\frac{\beta}{2}}\delta_{\epsilon}}{2}\Big] \, ,
$$
which, by the choice of $\beta < 2$ and the invariance principle proved in Proposition \ref{one path convergence}, converges to zero as $n \rightarrow \infty$. Thus the first probability in \eqref{eq:nujs} converges to zero as $n \rightarrow \infty$. 

It remains to deal with the second and third terms in \eqref{eq:nujs}. Since
$$
|\pi^{j\lfloor n\sigma\rfloor}(T_{\zeta_j})-\pi^{m\lfloor n\sigma\rfloor}(T_{\zeta_j})|\leq 2n^{\frac{3}{4}}
$$
on $D_{n,j}$, then by Corollary \ref{tau(0,le)} there is some constant $C>0$ such that
\begin{align*}
    \pr\Big[D_{n,j}\cap\{\tau^j> T_{\zeta_j}+n^{\beta}\gamma\}\Big]\leq\frac{2C n^{\frac{3}{4}}}{n^{\frac{\beta}{2}}}
\end{align*}
which converges to zero as $n \rightarrow \infty$ because $\beta > 3/2$.
Even though $\pi^{j\lfloor n \sigma \rfloor}$ and $\overline{\pi}^{j\lfloor n \sigma \rfloor}$ are independent from time $T_{\zeta_j}$ until coalescence, we can prove the result stated in Corollary \ref{tau(0,le)} for these paths, following (in a simpler way) the same lines of the proof of that corollary. Thus we get a constant $\overline{C}$ such that 
$$
\pr\Big[D_{n,j}\cap\{\overline{\tau}^j> T_{\zeta_j}+n^{\beta}\gamma\}\Big]\leq\frac{2\overline{C} n^{\frac{3}{4}}}{n^{\frac{\beta}{2}}} \, ,
$$
which as before converges to zero as $n \rightarrow \infty$.

Hence
$$
\lim_{n \rightarrow \infty} \pr\Big[\mathcal{A}_{n,M}^c\cap\mathcal{B}_{n,M}\cap\{\zeta=\zeta_j\}\cap\{\sup_{0\leq t\leq Mn^2\gamma}|\pi^{m}_n(t)-\overline{\pi}^{m}_n(t)|>n\sigma\delta_{\epsilon}\}\Big] = 0 
$$
which finishes the proof.
\end{proof}

\bigskip

\section{Condition $B$} 
\label{sec:B}

We prove condition $B$ of Theorem \ref{convergence theo} at the end of this section. Before we prove it we need to introduce some definitions and establish some preliminary results. 

Recall that we are supposing $\pr[W_{(0,0)}\leq K]=1$, so $K$ is fixed in the definition of the model and the reader should keep it in mind since many objects in this section will depend on $K$. For $u=(u(1),u(2))\in\Z^2$ define the box on $\Z^2$
\begin{align*}
    \Gamma(u) :=\{u(1),\dots,u(1)+K-1\}\times\{u(2)-K+1,\dots,u(2)\} \, .
\end{align*}
We say that the box $\Gamma(u)$ is good if $W_v=1$ and $v$ is open for all $v\in \Gamma(u)$.
\begin{remark}\label{cross C_K}
We point out that when $\Gamma(u)$ is good then no path of the GRDF crosses it from left to right or right to left touching either $(-\infty,u(1)-1]\times\{u(2)\}$ or $[u(1)+K,\infty)\times\{u(2)\}$.  We will use this property to control the number of paths in the GRDF that cross some interval on a given time. This will be fundamental to obtain estimates on the distribution of the counting variables $\eta_{\mathcal{X}_n}(t_0,t,a,b)$.
\end{remark}
Let us define the following random variables
\begin{align*}
    g^+(u):=\inf\{n\geq 1: \Gamma(u+(n-1)Ke_1)\text{ is good}\},
\end{align*}
and 
\begin{align*}
    g^-(u):=\inf\{n\geq 1: \Gamma(u-(nK-1)e_1)\text{ is good}\}.
\end{align*}
Therefore $\Gamma\big(u+g^+(u)Ke_1\big)$ is the first translation of $\Gamma(u)$ to the right of $u$, by multiples of $K$, that is good; and $\Gamma\big(u-(g^-(u)K-1)e_1\big)$ is the first translation of $\Gamma(u)$ to the left of $u$, by multiples of $K$, that is good.

\smallskip

The first lemma below allow us to consider the counting variables $\eta_{\mathcal{X}_n}(t_0,t,a,b)$ only for $t_0 \in \Z$.

\smallskip

\begin{lemma}
\label{lemma:condB1}
Take $a<b\in\R$, $\mathcal{X}_n$ as defined in (\ref{$X_n$}) with $\gamma$ and $\sigma$ as in Proposition \ref{one path convergence} and $\eta_{\mathcal{X}_n}(t_0,t,a,b)$ as in Theorem \ref{convergence theo}. Then for all $\epsilon>0$ there exits a constant $M_{\epsilon}$, not depending on $a$, $b$, $\gamma$ and $\sigma$, such that
\begin{align*}
\pr\big[|\eta_{\mathcal{X}_n}(t_0,t,a,b)|> 1\big]\leq\pr\big[|\eta_{\mathcal{X}}(0,n^2\gamma t,n\sigma a-M_{\epsilon},n\sigma b+M_{\epsilon} )|> 1\big]+\epsilon
\end{align*}
 for all $t_{0}\in\R$, $t>0$ and $n\geq 1$. 
\end{lemma}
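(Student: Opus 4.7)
My plan is to split the argument according to whether $n^2\gamma t_0$ is an integer. In the easy case $n^2\gamma t_0\in\Z$, the integer time-translation invariance of the i.i.d.\ environment $(\{U_v\},\{W_v\})_{v\in\Z^2}$ gives $\pr[|\eta_{\mathcal{X}_n}(t_0,t,a,b)|>1]=\pr[|\eta_{\mathcal{X}}(0,n^2\gamma t,n\sigma a,n\sigma b)|>1]$, which is trivially bounded by the right-hand side of the lemma for any $M_\epsilon\geq 0$ (enlarging the starting interval can only add paths). So the real work lies in the non-integer case.

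For $n^2\gamma t_0\notin\Z$, set $s_0:=n^2\gamma t_0$, $s_0^-:=\lfloor s_0\rfloor$ and $\delta:=s_0-s_0^-\in(0,1)$; after translating time by $-s_0^-$ it suffices to bound $\pr[|\eta_{\mathcal{X}}(\delta,n^2\gamma t,n\sigma a,n\sigma b)|>1]$. A convenient simplification here is that the interval $(0,\delta]$ contains no integer, so every path of $\mathcal{X}$ touching $\R\times\{\delta\}$ is already defined at time $0$, and its values at $0$ and at $\delta$ are both linear interpolations of the same pair of lattice visits straddling $[0,\delta]$.

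To transport the paths from time $\delta$ back to the integer time $0$ I will use the good boxes $\Gamma(u)$ as spatial barriers. Set $G^+:=g^+\big((\lceil n\sigma b\rceil,K-1)\big)$ and $G^-:=g^-\big((\lfloor n\sigma a\rfloor,K-1)\big)$; since a given box is good with probability $(p\pr[W=1])^{K^2}>0$, a positive constant depending only on $p,K$ and the law of $W$, the variables $G^\pm$ have geometric tails. Pick $N_\epsilon$ so that $\pr[G^+>N_\epsilon]+\pr[G^->N_\epsilon]<\epsilon$ and set $M_\epsilon:=KN_\epsilon$; note $M_\epsilon$ depends only on $\epsilon$, $p$, $K$ and the law of $W$, as required. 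On the event $G_\epsilon:=\{G^+\leq N_\epsilon\}\cap\{G^-\leq N_\epsilon\}$, there are good boxes flanking the interval $[n\sigma a,n\sigma b]$ within distance $M_\epsilon$, each with top-time $K-1$ and hence covering the time range $[0,K-1]\supseteq\{0,\delta\}$. By a suitable application of Remark~\ref{cross C_K} throughout the box's time range, these boxes trap every path satisfying $\pi(\delta)\in[n\sigma a,n\sigma b]$, forcing $\pi(0)\in[n\sigma a-M_\epsilon,n\sigma b+M_\epsilon]$; equivalently, on $G_\epsilon$, $\{\pi:\pi(\delta)\in[n\sigma a,n\sigma b]\}\subseteq\{\pi:\pi(0)\in[n\sigma a-M_\epsilon,n\sigma b+M_\epsilon]\}$.

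Finally, two paths in the smaller set whose positions differ at $\delta+n^2\gamma t$ are not coalesced by that time; since coalescence in $\mathcal{X}$ is permanent and a fixed real time $n^2\gamma t$ is, with probability one, not a crossing time of any pair in the at most countable family of paths, they also have distinct positions at the earlier time $n^2\gamma t$. This yields $|\eta_{\mathcal{X}}(0,n^2\gamma t,n\sigma a-M_\epsilon,n\sigma b+M_\epsilon)|>1$ on $G_\epsilon$, and combining with $\pr[G_\epsilon^c]<\epsilon$ completes the plan. The main technical obstacle I anticipate is extracting from Remark~\ref{cross C_K} a uniform trapping statement over the whole time range $[0,K-1]$ of the good box—rather than only at its top time—by a careful case analysis exploiting that $W=1$ inside a good box forces any path visiting a lattice point there to move one step directly upward, thereby preventing spatial crossings at every time in the box's range.
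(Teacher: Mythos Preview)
Your approach is essentially the paper's: both use good boxes $\Gamma(\cdot)$ as lateral barriers, bound their distance to $[n\sigma a,n\sigma b]$ via the geometric tails of $g^\pm$, and then invoke integer-time translation invariance. The paper simply asserts that any path touching $[n\sigma a,n\sigma b]\times\{n^2\gamma t_0\}$ also touches the widened interval at the integer time $\lfloor n^2\gamma t_0\rfloor+1$, and then writes the corresponding inequality for $\eta_{\mathcal X}$; it does not split into the integer/non-integer cases and does not discuss the endpoint-time shift at all. Your box placement (top time $K-1$ after translating down to $0$) versus the paper's (top time $\lfloor n^2\gamma t_0\rfloor+1$) is a cosmetic difference.

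The one real gap in your proposal is Step~3. The claim that ``a fixed real time $n^2\gamma t$ is, with probability one, not a crossing time of any pair'' is not justified: the paths in $\mathcal X$ are piecewise linear with integer-time integer-valued breakpoints, so any crossing time of a pair is a rational number determined by a \emph{discrete} environment. If $n^2\gamma t$ happens to be rational (and the lemma must hold for all $t>0$), the event that two specific paths cross exactly at $n^2\gamma t$ can have positive probability. So distinct positions at time $\delta+n^2\gamma t$ do not in general force distinct positions at time $n^2\gamma t$. The paper sidesteps this by silently keeping the duration $n^2\gamma t$ while moving the start time to an integer; strictly speaking that inequality is not justified either, and the lemma as stated is slightly stronger than what the barrier argument alone delivers (one honestly gets duration $n^2\gamma t+O(1)$). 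For the only use of the lemma---proving condition~$B$ via Lemma~\ref{lemma to the condition B}, where $t>\beta$ is bounded below---this $O(1)$ slack is harmless. Your concern about extracting a trapping statement from Remark~\ref{cross C_K} uniformly over the box's time range is legitimate, but the paper does not argue this either; it is taken as evident from the definition of a good box.
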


\begin{proof} 
Note that any path that crosses $[n\sigma a,n\sigma b]\times\{n^2\gamma t_0\}$ also crosses the interval
\begin{align*}
\Big[n\sigma a-Kg^-\big((\lfloor n\sigma a\rfloor,\lfloor n^2\gamma t_0\rfloor+1)\big),n\sigma b+Kg^+\big((\lfloor n\sigma b\rfloor+1,\lfloor n^2\gamma t_0\rfloor+1)\big)\Big]\times\{\lfloor n^2\gamma t_0\rfloor+1\}.
\end{align*}

Denote $a_n:=n \sigma a$, $b_n:=n \sigma b$. Then 
 \begin{align*}
 &\pr[ |\eta_{\mathcal{X}_n}(t_0,t,a,b)|>1]\\
 &=\pr\Big[|\eta_{\mathcal{X}}(n^2\gamma t_0,n^2\gamma t,a_n,b_n)|> 1]\\
 &\leq\pr\Big[\big|\eta_{\mathcal{X}}\big(\lfloor n^2\gamma t_0\rfloor,n^2\gamma t, a_n-Kg^-\big((\lfloor a_n\rfloor,\lfloor n^2\gamma t_0\rfloor+1)\big),b_n+Kg^+\big((\lfloor b_n\rfloor+1,\lfloor n^2\gamma t_0\rfloor+1)\big)\big)\big|>1\Big].
 \end{align*}
 Take $M_{\epsilon}$ big enough such that
 \begin{align*}
 \pr\Big[Kg^-\big((\lfloor a_n\rfloor,\lfloor n^2\gamma t_0\rfloor+1)\big)>M_{\epsilon}]=\pr\Big[Kg^+\big((\lfloor b_n\rfloor+1,\lfloor n^2\gamma t_0\rfloor+1)\big)>M_{\epsilon}\Big]\leq\frac{\epsilon}{2}.
 \end{align*}
 Then by translation invariance we get
 \begin{align*}
 \pr[|\eta_{\mathcal{X}_n}(t_0,t,a,b)|>1]&\leq \pr\big[|\eta_{\mathcal{X}}(\lfloor n^2\gamma t_0\rfloor,n^2\gamma t, a_n-M_{\epsilon},b_n+M_{\epsilon})|>1\big]+\epsilon\\&=\pr\big[|\eta_{\mathcal{X}}(0,n^2\gamma t, a_n-M_{\epsilon},b_n+M_{\epsilon})|>1\big]+\epsilon.
 \end{align*}
 \end{proof}
 
 \smallskip

Our next result says that the number of paths in $\mathcal{X}$, starting before time $t$, that cross a finite length interval at time $t$, have finite absolute moment of any order. 

\begin{lemma}\label{paths in [a,b]}  Let us define $\mathcal{X}^{t^-}$  as the set of paths in $\mathcal{X}$ that start before or at time $t$ and denote $\mathcal{X}^{t-}(t) = \{ \pi(t) : \pi \in \mathcal{X}^{t^-} \}$. Then we have that 
\begin{align*}
\esp\Big[\big|\mathcal{X}^{t^-}(t)\cap[a,b]\big|^k\Big]<\infty,
\end{align*}
for $a<b\in\R$ and $k\geq 1$.
\end{lemma}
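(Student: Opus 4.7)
The plan is to enumerate distinct values in $\mathcal{X}^{t^-}(t) \cap [a, b]$ by the last lattice point each underlying path visits on or before time $t$, and then to bound the resulting sum via Chernoff-type tail estimates on the first-jump mechanism. By stationarity of the environment in time, I may assume $t = 0$. For each $(y, s) \in \Z \times (\Z \cap (-\infty, 0])$ introduce the event $E_{(y, s)} := \{X^{(y, s)}_1(2) > 0\} \cap \{\pi^{(y, s)}(0) \in [a, b]\}$, where $X^{(y,s)}_1$ is the target of the first jump of the GRDF path started at $(y,s)$. The first task is to prove the pointwise bound $|\mathcal{X}^{0^-}(0) \cap [a, b]| \leq \sum_{(y, s)} \mathbbm{1}_{E_{(y, s)}}$: any value $v = \pi^u(0) \in [a, b]$ is produced by the lattice point $(y, s) := X^u_N$ with $N := \max\{n \geq 0 : X^u_n(2) \leq 0\}$, which realizes $E_{(y, s)}$ (because $\pi^{(y, s)}$ continues $\pi^u$ from $X^u_N$ onward, so $X^{(y, s)}_1 = X^u_{N+1}$ has time $> 0$ and $\pi^{(y, s)}(0) = v$); since any $(y, s)$ with $E_{(y, s)}$ determines $\pi^{(y, s)}(0)$ uniquely, choosing one such lattice point per distinct value gives an injection from the distinct values into $\{(y, s) : E_{(y, s)}\}$.

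Second, I would derive an exponential tail bound $\pr[E_{(y, s)}] \leq C_1 e^{-c_1(|s| + \mathrm{dist}(y, [a, b]))}$ for constants $C_1, c_1 > 0$ depending only on $p$ and $K$. On $E_{(y, s)}$ the vertical jump size $d := X^{(y, s)}_1(2) - s$ satisfies $d \geq |s| + 1$; moreover, since $X^{(y, s)}_1$ is the topmost open point of the $W_{(y, s)}$-th open level at $L_1$-distance $h := h(y, s; W_{(y, s)})$, one has $|X^{(y, s)}_1(1) - y| = h - d$. Linear interpolation between $(y, s)$ and $X^{(y, s)}_1$ at time $0$ yields $|\pi^{(y, s)}(0) - y| = (|s|/d)|X^{(y,s)}_1(1) - y| \leq h - d$ (using $|s|/d \leq 1$), and the condition $\pi^{(y, s)}(0) \in [a, b]$ then forces $h - d \geq \mathrm{dist}(y, [a, b])$, i.e.\ $h \geq |s| + 1 + \mathrm{dist}(y, [a, b])$. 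Since each level $L((y, s), j)$ has $2j - 1$ sites each open independently with probability $1 - (1 - p)^{2j - 1}$ and $W \leq K$ almost surely, a standard Chernoff bound on the number of open levels among the first $m$ gives $\pr[h \geq m] \leq C_1 e^{-c_1 m}$, and the tail estimate for $\pr[E_{(y, s)}]$ follows.

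Finally, raising the Step 1 inequality to the $k$-th power, taking expectations, and using $\pr[\bigcap_{j=1}^k E_{(y_j, s_j)}] \leq \min_j \pr[E_{(y_j, s_j)}] \leq C_1 e^{-c_1 M}$ with $M := \max_j (|s_j| + \mathrm{dist}(y_j, [a, b]))$, I would group tuples by the value of $M$. The number of $(y, s)$ with $|s| + \mathrm{dist}(y, [a, b]) \leq M$ is $O((M + 1)(b - a + M + 1))$, so the number of $k$-tuples with fixed $M$ grows only polynomially in $M$ and $b - a$, and the series $\sum_M M^{O(k)}(b - a + M)^{O(k)} e^{-c_1 M}$ converges, establishing $\esp[|\mathcal{X}^{t^-}(t) \cap [a, b]|^k] < \infty$. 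The main obstacle is Step 1: the bijection-style enumeration that associates every distinct real value to a lattice point at which the underlying path is on the verge of its first post-$t$ jump and the verification that this assignment is injective despite the possibility of accidental coincidences of interpolated values; once this is in place, Steps 2 and 3 reduce to routine Chernoff-bound estimates and polynomial-versus-exponential summation.
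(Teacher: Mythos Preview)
Your proof is correct and takes a genuinely different route from the paper's.

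The paper constructs a random trapping region $D$: it places ``good boxes'' $\Gamma(u)$ (a $K\times K$ block in which every site is open with $W_v=1$) to the left and right of $[0,1]$ to act as horizontal barriers, and in each column between these barriers it goes down to the $K$-th open site. Any path crossing $[0,1]\times\{0\}$ must then have a lattice point in $D$, so $|\mathcal{X}^{0^-}(0)\cap[0,1]|\le |D|$. The size $|D|$ is a sum of negative-binomial-type variables indexed by a geometric number of columns, and finiteness of all moments follows from the technical Lemma~\ref{sum moments}. This is a constructive pointwise domination by a single explicit random variable, and it reuses the good-box machinery that the paper also needs for Lemma~\ref{lemma:condB1}.

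Your approach instead bounds, site by site, the probability that $(y,s)$ is the last pre-time-$0$ lattice point of a path that lands in $[a,b]$, via the exponential tail of the first-jump $L_1$ radius $h$; the injection in Step~1 turns this into a pointwise bound on the count, and Step~3 handles the $k$-th moment by grouping $k$-tuples according to $M=\max_j(|s_j|+\mathrm{dist}(y_j,[a,b]))$. This is more analytic and entirely self-contained: it does not need the good-box barrier idea or the auxiliary moment lemma, and it handles arbitrary $[a,b]$ directly rather than reducing to unit intervals. The trade-off is that you do not produce a single dominating random variable, only moment bounds, whereas the paper's region $D$ is reused implicitly (via the good boxes) elsewhere in Section~\ref{sec:B}. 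Both arguments ultimately rest on the same fact, the exponential tail of $h((y,s),K)$, but they package it differently.
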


\begin{proof}
We will assume that $t=a=0$ and $b=1$. The proof in the general case is analogous. For $j\in\Z$ let us define $\zeta_j:=\inf\big\{n\geq 0:\sum_{i=0}^{n}\mathbbm{1}_{\{(j,-i)\text{ is open}\}}=K\big\}$ and the random region $D$ as
 \begin{align*}
     D:=\big\{v\in\Z^2: -Kg^-((1,0))\leq v(1)\leq 1+Kg^+((1,0)) \text{ and } -\zeta_{v(1)}\leq v(2)\leq 0 \big\}.
 \end{align*}
In Figure 7 we show a possible realization of $D$.

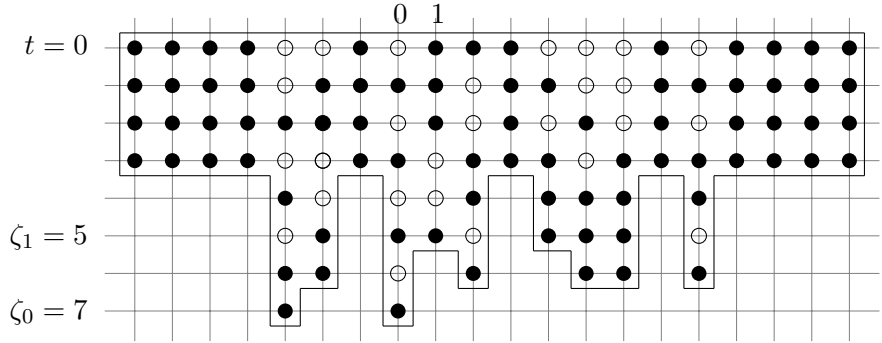
\begin{figure}[H]\label{D}
\begin{tikzpicture}
\draw [step= 0.5 cm, gray, very thin] (-0.4,-0.4) grid (9.9,3.9);

\fill[black](0,3.5) circle (1mm) ; \fill[black](0,3) circle (1mm) ;
\fill[black](0,2.5) circle (1mm) ; \fill[black](0,2) circle (1mm) ; 

\fill[black](0.5,3.5) circle (1mm) ; \fill[black](0.5,3) circle (1mm) ;
\fill[black](0.5,2.5) circle (1mm) ; \fill[black](0.5,2) circle (1mm) ;

\fill[black](1,3.5) circle (1mm) ; \fill[black](1,3) circle (1mm) ;
\fill[black](1,2.5) circle (1mm) ; \fill[black](1,2) circle (1mm) ;

\fill[black](1.5,3.5) circle (1mm) ; \fill[black](1.5,3) circle (1mm) ;
\fill[black](1.5,2.5) circle (1mm) ; \fill[black](1.5,2) circle (1mm) ;

\draw (2,3.5) circle (1mm); \draw (2,3) circle (1mm);
 \fill[black](2,2.5) circle (1mm) ; \draw (2,2) circle (1mm);
 \fill[black](2,1.5) circle (1mm) ;\draw (2,1) circle (1mm);
\fill[black](2,0.5) circle (1mm) ; \fill[black](2,0) circle (1mm) ;

\draw (2.5,3.5) circle (1mm); \fill[black](2.5,3) circle (1mm) ; 
\fill[black](2.5,2.5) circle (1mm) ; \draw (2.5,2) circle (1mm);
\draw (2.5,2.5) circle (1mm); \draw (2.5,2) circle (1mm);
\draw (2.5,1.5) circle (1mm); \fill[black](2.5,1) circle (1mm) ;
\fill[black](2.5,0.5) circle (1mm) ; 

\fill[black](3,3.5) circle (1mm) ; \fill[black](3,3) circle (1mm) ;
\fill[black](3,2.5) circle (1mm) ; \fill[black](3,2) circle (1mm) ;

\draw (3.5,3.5) circle (1mm);\fill[black](3.5,3) circle (1mm) ;
\draw (3.5,2.5) circle (1mm);\fill[black](3.5,2) circle (1mm) ;
\draw (3.5,1.5) circle (1mm); \fill[black](3.5,1) circle (1mm) ; 
\draw (3.5,0.5) circle (1mm);\fill[black](3.5,0) circle (1mm) ;

\fill[black](4,3.5) circle (1mm) ; \fill[black](4,3) circle (1mm) ;
\fill[black](4,2.5) circle (1mm) ; \draw (4,2) circle (1mm);
\draw (4,1.5) circle (1mm); \fill[black](4,1) circle (1mm) ;

\fill[black](4.5,3.5) circle (1mm) ; \draw (4.5,3) circle (1mm);
\draw (4.5,2.5) circle (1mm); \fill[black](4.5,2) circle (1mm) ;
\fill[black](4.5,1.5) circle (1mm) ; \draw (4.5,1) circle (1mm);
\fill[black](4.5,0.5) circle (1mm) ; 

\fill[black](5,3.5) circle (1mm) ; \fill[black](5,3) circle (1mm) ;
\fill[black](5,2.5) circle (1mm) ; \fill[black](5,2) circle (1mm) ;

\draw (5.5,3.5) circle (1mm); \fill[black](5.5,3) circle (1mm) ; 
\draw (5.5,2.5) circle (1mm); \fill[black](5.5,2) circle (1mm) ;
\fill[black](5.5,1.5) circle (1mm) ; \fill[black](5.5,1) circle (1mm) ;

\draw (6,3.5) circle (1mm); \draw (6,3) circle (1mm);
 \fill[black](6,2.5) circle (1mm) ; \draw (6,2) circle (1mm); 
 \fill[black](6,1.5) circle (1mm) ;\fill[black](6,1) circle (1mm) ;
 \fill[black](6,0.5) circle (1mm) ;

\draw (6.5,3.5) circle (1mm); \draw (6.5,3) circle (1mm);
\draw (6.5,2.5) circle (1mm);\fill[black](6.5,2) circle (1mm) ; 
\fill[black](6.5,1.5) circle (1mm) ; \fill[black](6.5,1) circle (1mm) ;
\fill[black](6.5,0.5) circle (1mm) ;

\fill[black](7,3.5) circle (1mm) ; \fill[black](7,3) circle (1mm) ;
\fill[black](7,2.5) circle (1mm) ; \fill[black](7,2) circle (1mm) ;

\draw (7.5,3.5) circle (1mm); \fill[black](7.5,3) circle (1mm) ; 
\draw (7.5,2.5) circle (1mm); \fill[black](7.5,2) circle (1mm) ;
\fill[black](7.5,1.5) circle (1mm) ; \draw (7.5,1) circle (1mm);
\fill[black](7.5,0.5) circle (1mm) ; 

\fill[black](8,3.5) circle (1mm) ; \fill[black](8,3) circle (1mm) ;
\fill[black](8,2.5) circle (1mm) ; \fill[black](8,2) circle (1mm) ; 

\fill[black](8.5,3.5) circle (1mm) ; \fill[black](8.5,3) circle (1mm) ;
\fill[black](8.5,2.5) circle (1mm) ; \fill[black](8.5,2) circle (1mm) ;

\fill[black](9,3.5) circle (1mm) ; \fill[black](9,3) circle (1mm) ;
\fill[black](9,2.5) circle (1mm) ; \fill[black](9,2) circle (1mm) ;

\fill[black](9.5,3.5) circle (1mm) ; \fill[black](9.5,3) circle (1mm) ;
\fill[black](9.5,2.5) circle (1mm) ; \fill[black](9.5,2) circle (1mm) ;

\draw (3.3,3.7) node[above right]{$0$};
\fill[black](3.8,3.7) node[above right]{$1$};
\draw (-1.8,0.3) node[below right]{$\zeta_{0}=7$};
\draw (-1.8,1.3) node[below right]{$\zeta_{1}=5$};
\draw (-1.6,3.8) node[below right]{$t=0$};

\draw[-,black] (-0.2,3.7) -- (-0.2,1.8); \draw[-,black] (-0.2,1.8) -- (1.8,1.8);
\draw[-,black] (1.8,1.8) -- (1.8,-0.2); \draw[-,black] (1.8,-0.2) -- (2.2,-0.2);
\draw[-,black] (2.2,-0.2) -- (2.2,0.3); \draw[-,black] (2.2,0.3) -- (2.7,0.3);
\draw[-,black] (2.7,0.3) -- (2.7,1.8); \draw[-,black] (2.7,1.8) -- (3.3,1.8);
\draw[-,black] (3.3,1.8) -- (3.3,-0.2); \draw[-,black] (3.3,-0.2) -- (3.7,-0.2); 
\draw[-,black] (3.7,-0.2) -- (3.7,0.8); \draw[-,black] (3.7,0.8) -- (4.3,0.8); 
\draw[-,black] (4.3,0.8) -- (4.3,0.3); \draw[-,black] (4.3,0.3) -- (4.7,0.3); 
\draw[-,black] (4.7,0.3) -- (4.7,1.8); \draw[-,black] (4.7,1.8) -- (5.3,1.8); 
\draw[-,black] (5.3,1.8) -- (5.3,0.8); \draw[-,black] (5.3,0.8) -- (5.8,0.8);
\draw[-,black] (5.8,0.8) -- (5.8,0.3); \draw[-,black] (5.8,0.3) -- (6.7,0.3);
\draw[-,black] (6.7,0.3) -- (6.7,1.8); \draw[-,black] (6.7,1.8) -- (7.3,1.8);
\draw[-,black] (7.3,1.8) -- (7.3,0.3); \draw[-,black] (7.3,0.3) -- (7.7,0.3);
\draw[-,black] (7.7,0.3) -- (7.7,1.8); \draw[-,black] (7.7,1.8) -- (9.7,1.8);
\draw[-,black] (9.7,1.8) -- (9.7,3.7); \draw[-,black] (9.7,3.7) -- (-0.2,3.7);

\end{tikzpicture}
\caption{In this picture we assume that $K=4$. The blacks balls represent open sites and the white ones represent closed sites. The region $D$ is given by the set of sites inside the contour in bold. Note that $g^+((1,0)) = 3$ and $g^-((1,0)) = 2$.} 
\end{figure}
 
It is simple to check that there is no path crossing $[0,1]\times\{0\}$ without landing in $D$, hence
\begin{align*}
\big|\mathcal{X}^{0^-}(0)\cap[0,1]\big|\leq \big|D\big|= \sum_{j=1}^{Kg^+((1,0))}\zeta_j + \sum_{j=0}^{Kg^-((1,0))}\zeta_{-j} \, .
\end{align*}
Since $g^+((1,0))$ and $g^-((1,0))$ are geometric random variables, using Lemma \ref{sum moments} we finish the proof.
\end{proof}

\smallskip

\begin{remark}
Using Lemma \ref{paths in [a,b]} we have that the number of paths in $\overline{\mathcal{X}}$ that cross $[a,b]\times\{t\}$ is finite. From this fact we get Proposition \ref{well posedness} following the proof given in \cite{newman2005convergence}.
\end{remark}

\medskip

We are going to need another result about renewal times. Here we need to define the renewal times for a finite collection of paths in $\mathcal{X}^{t^-}$, such that all we know about them is that they cross an interval $[a,b]$ at time $t$. Therefore we are interested in $\mathcal{X}^{t^-}(t) \cap [a,b] = \{\pi \in \mathcal{X}^{t^-}: \pi(t) \in [a,b]\}$. The proof is analogous to that of Proposition \ref{renewal time proposition} and it will be omitted.

\medskip

\begin{lemma}\label{renewals times 3.0}
Fix $a<b$ and consider the collection of paths $\Xi = \{\pi \in \mathcal{X}^{t^-}: \pi(t) \in [a,b]\}$. For any $\pi,\pi^0\in\Xi$  there exist random variables $T_{\pi,\pi^0}$ and $Z_{\pi,\pi^0}$ such that 
\begin{enumerate}
\item [(i)] $t<T_{\pi,\pi^0}$ and $T_{\pi,\pi^0}-t\leq Z_{\pi,\pi^0}$.
\item [(ii)] $T_{\pi,\pi^0}$  is a common renewal time for $\pi$ and $\pi^0$.
\item [(iii)] $\max_{\pi\in\{\pi,\pi^0\}}\sup_{t\leq s\leq T_{\pi,\pi^0}}|\pi(s)-\pi(t)|\leq Z_{\pi,\pi^0}$.
\item [(iv)] For all $k\in\N$ we have that $\esp\big[(Z_{\pi,\pi^0})^k\big]<\infty$.
\item [(v)] For any other pair $\widetilde{\pi},\widetilde{\pi}^0\in\Xi$ we have that $T_{\widetilde{\pi},\widetilde{\pi}^0}$ and $T_{\pi,\pi^0}$ are identically distributed given $\big|\mathcal{X}^{t^-}(t)\cap[a,b]\big|$. The same happens for  $Z_{\widetilde{\pi},\widetilde{\pi}^0}$ and  $Z_{\pi,\pi^0}$.
\end{enumerate}
\end{lemma}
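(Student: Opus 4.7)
The plan is to follow the proof of Proposition~\ref{renewal time proposition} essentially line by line, with one key adjustment: the paths in $\Xi$ do not all start at the same time level, and at time $t$ each $\pi \in \Xi$ may carry a non-empty history region $\Delta$ extending above $t$ that depends on its (unknown) past. First, I would show that for a single $\pi \in \Xi$ there exists a sequence of renewal times $(T^\pi_j)_{j \ge 1}$ strictly greater than $t$, enjoying the same integrability and independence properties as in Corollary~\ref{renewal time corollary one path}. The argument is to apply the construction of Lemma~\ref{tpr} starting after $\pi$ has exited its initial history region at time $t$: since the environment strictly above that region is iid and independent of the past, a geometric number of independent trials of the event $E(\cdot)$ produces a first renewal time $T^\pi_1 > t$, and iteration gives $(T^\pi_j)_{j \ge 2}$.

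Next, exactly as in the proof of Proposition~\ref{renewal time proposition} (case $m=2$), I would use $(T^\pi_j)_{j\ge 1}$ as the scaffolding to couple in $\pi^0$. For each $j$, set $s_j$ to be the first step of $\pi^0$ at which it needs information about the environment above time $T^\pi_j - K$, and define $\widehat E_j(\pi^0) := E\bigl((X^{\pi^0}_{s_j}(1),\; T^\pi_j - K)\bigr)$. After the usual modification in case this event is not independent of $\pi$ (which happens only when $\pi^0$ and $\pi$ have already met), the $\widehat E_j(\pi^0)$ become iid with the same strictly positive probability, so $G := \inf\{j \ge 1 : \widehat E_j(\pi^0) \text{ occurs}\}$ is a geometric random variable. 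Setting $T_{\pi,\pi^0} := T^\pi_G$ yields a common renewal time, so (i) and (ii) hold. Property (iii) is verified by defining $Z_{\pi,\pi^0}$ as a suitable polynomial in the $\xi_i$'s arising in the underlying trials together with the bounds on the initial history regions of $\pi$ and $\pi^0$ at time $t$, and then (iv) follows from Lemma~\ref{sum moments} in the appendix.

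Property (v) should then follow from symmetry. The construction above depends on the identities $(\pi,\pi^0)$ only through their positions at time $t$, their initial history regions above $t$, and the iid environment strictly above $t$. Conditional on the count $N := |\mathcal{X}^{t^-}(t) \cap [a,b]|$, the joint law of positions and history regions for any ordered pair of paths from $\Xi$ is exchangeable by translation invariance and the iid structure of the environment, so $T_{\pi,\pi^0}$ and $Z_{\pi,\pi^0}$ have the same conditional distribution for any choice of pair.

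The main obstacle I anticipate is the careful treatment of the initial history regions at time $t$. Unlike in Proposition~\ref{renewal time proposition}, where both paths start fresh, here both $\pi$ and $\pi^0$ may carry arbitrarily complicated history regions above $t$ inherited from their unknown pasts. The resolution is to dominate these initial histories by integrable random variables using the finite moment bounds supplied by Lemma~\ref{paths in [a,b]} for $|\mathcal{X}^{t^-}(t)\cap[a,b]|$, together with the renewal/sum-moments machinery developed in Section~\ref{sec:renewal}. Once this domination is in place, the moment bound in (iv) is preserved, and the rest of the argument is a direct transcription of the proof of Proposition~\ref{renewal time proposition}.
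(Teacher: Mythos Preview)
Your proposal is correct and matches the paper's approach: the paper does not give a proof of this lemma at all, stating only that ``the proof is analogous to that of Proposition~\ref{renewal time proposition} and it will be omitted.'' You have correctly identified the one genuine new ingredient (the paths in $\Xi$ may carry nontrivial history above time $t$) and the right machinery to absorb it.

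One small refinement: your justification of (v) via ``exchangeability'' is slightly off. The positions of the paths in $\Xi$ at time $t$ are not exchangeable in general. The correct reason (v) holds is the one already present in Proposition~\ref{renewal time proposition} and the Remark following it: the random variables $T$ and $Z$ are built from geometric counts and iid $\xi_j$-type variables whose laws do not depend on the spatial positions of the paths involved, only on the number of paths being synchronized. Conditioning on $|\mathcal{X}^{t^-}(t)\cap[a,b]|$ fixes that number, and the construction then gives identically distributed $T$ and $Z$ for any pair.
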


\medskip

We need one more result before we prove condition $B$.

\smallskip

\begin{lemma}\label{lemma to the condition B} There exists a constant $C_1 > 0$ such that
\begin{align*}
    \pr\big[|\eta_{\mathcal{X}}(0,k,0,m)|>1\big]\leq\frac{C_1m}{\sqrt{k}} \, ,
\end{align*}
 for every $m\geq 1$.
\end{lemma}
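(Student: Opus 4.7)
The plan is to reduce $\{|\eta_{\mathcal{X}}(0,k,0,m)| > 1\}$ to the event that some pair of GRDF paths starting at consecutive integer sites on $\{y=0\}$ fails to coalesce by time $k$, together with a small-probability correction coming from paths that start strictly below time $0$.

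First I focus on the $m+1$ reference paths $\pi^{(0,0)}, \pi^{(1,0)}, \ldots, \pi^{(m,0)}$, which all touch $[0,m]\times\{0\}$. For each $j = 0, 1, \ldots, m-1$, let $\vartheta_j$ denote the coalescence time of $\pi^{(j,0)}$ and $\pi^{(j+1,0)}$. By Proposition \ref{tau(u,v)} together with the translation invariance of the GRDF,
\[
\pr[\vartheta_j > k] \le \frac{C}{\sqrt{k}}
\]
for a universal constant $C$. A union bound over $j$ then gives
\[
\pr\Big[\bigcup_{j=0}^{m-1}\{\vartheta_j > k\}\Big] \le \frac{Cm}{\sqrt{k}},
\]
and on the complementary event $G$ we have $\pi^{(0,0)}(k) = \pi^{(1,0)}(k) = \cdots = \pi^{(m,0)}(k) =: y$.

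Second, on the event $G$ I must argue that $|\eta_{\mathcal{X}}(0,k,0,m)| = 1$ except on a further event of probability $\le Cm/\sqrt{k}$. The only paths that could possibly produce a position distinct from $y$ are paths $\pi^u$ with $u(2)<0$ and $\pi^u(0)\in[0,m]$ (possibly at a non-lattice position) that do not share a lattice jump with any of the $\pi^{(j,0)}$. For these I use Lemma \ref{paths in [a,b]} to ensure that the family $\Xi=\{\pi\in\mathcal{X}^{0^-}:\pi(0)\in[0,m]\}$ is almost surely finite with moments controlled in terms of $m$, and then use Lemma \ref{renewals times 3.0} to produce a common renewal time $T^*$ for the paths in $\Xi$ (constructed by iterating the pairwise renewal times provided by that lemma). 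After $T^*$ all paths in $\Xi$ sit at integer lattice positions inside an interval of random length at most some $Z$ with finite moments, so the adjacent-path coalescence argument of the first step applied on the time horizon $[T^*,k]$ yields a conditional bound of $CZ/\sqrt{k-T^*}$. Splitting according to whether $T^*\le k/2$ or not and using the moment bounds on $T^*$ and $Z$ absorbs this into $C'm/\sqrt{k}$ for a suitable constant $C'$.

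The main obstacle will be the rigorous treatment of paths that cross the extremal reference paths $\pi^{(0,0)}$ or $\pi^{(m,0)}$ at non-lattice spacetime points without coalescing, since such crossings are not suppressed by the adjacent-coalescence estimate alone. The renewal time from Lemma \ref{renewals times 3.0} is essential here: once all relevant paths have simultaneously renewed, they are confined to integer sites in a random but moment-controlled range, and the standard adjacent-coalescence estimate applies cleanly from that point on.
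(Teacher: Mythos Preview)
Your proposal uses the same ingredients as the paper (Lemma~\ref{paths in [a,b]}, Lemma~\ref{renewals times 3.0}, the coalescence estimate of Corollary~\ref{tau(0,le)}, and the split according to whether the renewal time exceeds $k/2$), but the way you organise them introduces a genuine gap.

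The problem is in your second step, where you propose to build a \emph{single} common renewal time $T^*$ for \emph{all} paths in $\Xi=\{\pi\in\mathcal{X}^{0^-}:\pi(0)\in[0,m]\}$ and then claim that the moments of $T^*$ and of the spread $Z$ ``absorb into $C'm/\sqrt{k}$''. They do not. Look at how the joint renewal time for several paths is built in Proposition~\ref{renewal time proposition}: one waits for a geometric number of one-path renewals until an independent event of fixed probability $q$ occurs for each additional path. With $j$ paths the success probability per renewal is essentially $q^{\,j-1}$, so $\esp[T^*\mid |\Xi|=j]$ grows like $q^{-(j-1)}$, exponentially in $j$. Since $|\Xi|\ge m+1$ always (the $m+1$ reference paths are already in $\Xi$), $\esp[T^*]$ is at least exponential in $m$, and the bound $\pr[T^*>k/2]\le 2\esp[T^*]/k$ is useless for obtaining $Cm/\sqrt{k}$. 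The same blow-up contaminates $Z$.

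The paper avoids this in two ways. First, it reduces to $m=1$ at the outset via the elementary inequality
\[
\pr\big[|\eta_{\mathcal{X}}(0,k,0,m)|>1\big]\le\sum_{i=1}^m\pr\big[|\eta_{\mathcal{X}}(0,k,i-1,i)|>1\big]=m\,\pr\big[|\eta_{\mathcal{X}}(0,k,0,1)|>1\big],
\]
which holds because the path $\pi^{(i,0)}$ touches both $[i-1,i]\times\{0\}$ and $[i,i+1]\times\{0\}$, so if each $\eta_{\mathcal{X}}(0,k,i-1,i)$ is a singleton they must all coincide. After this reduction the random set $\Xi$ is for the unit interval and its law no longer depends on $m$. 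Second, even for $[0,1]$ the paper never builds a common renewal time for all of $\Xi$: it conditions on $L=|\Xi|$, lists the paths as $\pi_1,\dots,\pi_L$, and for each \emph{adjacent pair} uses a pairwise renewal time $T_{i,i+1}$ from Lemma~\ref{renewals times 3.0}. The split $T_{i,i+1}\gtrless k/2$ and Corollary~\ref{tau(0,le)} then give a bound of the form $C\,\esp[2Z_{1,2}+1\mid L=j]/\sqrt{k}$ for each pair, and summing over $j-1$ pairs followed by Cauchy--Schwarz in $j$ yields the result. Because only \emph{pairs} ever appear, the renewal-time moments stay uniformly bounded.

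Incidentally, your first step (coalescing the reference paths $\pi^{(0,0)},\dots,\pi^{(m,0)}$) is redundant: these paths belong to $\Xi$, so any correct version of your second step already covers them.
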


\begin{proof}
Let us consider $[0,m]$ as the union of length one closed intervals $[i-1,i]$ which are not disjoint. Thus if $|\eta(0,k,0,m)| > 1$ and $|\eta(0,k,i - 1,i)| = 1$ for every $1 \le i \le m$, then there exists $1 \le j \le m-1$ such that $|\eta(0,k,j - 1,j+1)| > 1$. However $|\eta(0,k,j - 1,j)| = |\eta(0,k,j,j+1)| = 1$ also implies that the path starting from $j$ will have to coalesce with all the paths starting at $[j-1,j]$ and all the paths starting at $[j,j+1]$, thus $|\eta(0,k,j - 1,j+1)| = 1$ yielding a contradiction. Therefore $\{|\eta(0,k,0,m)| > 1\} \subset \cup_{i=1}^m \{|\eta(0,k,i-1,i)| > 1\} $ and 
	\begin{align*}
	\pr\big[|\eta_{\mathcal{X}}(0,k,0,m)|>1\big]\leq\sum_{i=1}^m\pr\big[|\eta_{\mathcal{X}}(0,k,i-1,i)|>1\big]=m\pr\big[|\eta_{\mathcal{X}}(0,k,0,1)|>1\big],
	\end{align*}
	and
	\begin{align}\label{enq:0}
	&\pr\big[|\eta_{\mathcal{X}}(0,k,0,1)|>1\big]\nonumber \\
	&=\sum_{j= 2}^{\infty}\pr\big[|\eta_{\mathcal{X}}(0,k,0,1)|>1\big||\mathcal{X}^{0^-}(0)\cap[0,1]|=j\big]\pr\big[\big||\mathcal{X}^{0^-}(0)\cap[0,1]|=j\big] .   
	\end{align}
	To simplify notation, denote $L = |\mathcal{X}^{0^-}(0)\cap[0,1]|$. Given $L=j$, let $\pi_1,\dots,\pi_j$ be the paths in $\mathcal{X}^{0^-}$ such that $\pi_i(0)\in[0,1]$ for $i=1,\dots,j$ and define
	\begin{align*}
	\vartheta_{i,i+1}:=\inf\{n\geq 1: \pi_i(t)=\pi_{i+1}(t), \text{ for all } t\geq n\}.
	\end{align*}
	Note that
	\begin{align}\label{enq:1}
	\pr\big[|\eta_{\mathcal{X}}(0,k,0,1)|>1\big|L=j\big]\leq\sum_{i=1}^{j-1}\pr\big[\vartheta_{i,i+1}>k\big|L=j\big] \, .
	\end{align}
	The problem to deal with the probabilities on the right hand side of \eqref{enq:1} is that $\pi_i$ and $\pi_{i+1}$ might be just crossing interval $[0,1]$ through linear interpolation between two connected sites in the environment. Thus we need to wait until their first renewal time after time $0$ to compare the distance between them and use the estimates on the distribution of their coalescing time. This also makes the proof rather lengthy.   
	
	For $i=1,\dots,j-1$ let $T_{i,i+1}$ and $Z_{i,i+1}$ be the random variables as in Lemma \ref{renewals times 3.0} for the paths $\pi_i$ and $\pi_{i+1}$. Then we have that
	\begin{equation}\label{cifra}
	\pr[\vartheta_{i,i+1}>k | L=j ] =\pr\Big[\vartheta_{i,i+1}>k,T_{i,i+1}>\frac{k}{2}\Big|L=j\Big]
	+\pr\Big[\vartheta_{i,i+1}>k,T_{i,i+1}\leq\frac{k}{2}\Big|L=j\Big]
	\end{equation}
	Consider the first term on the right hand side of \eqref{cifra} which, by (i) and (iv) in Lemma \ref{renewals times 3.0}, is bounded above by 
	\begin{equation}\label{cifra2}
	\pr\big[T_{i,i+1}>\frac{k}{2}\big|L=j\big] \le 
	\frac{2\esp\big[T_{i,i+1}\big|L=j\big]}{k} \le
	\frac{2\esp\big[Z_{1,2}\big|L=j\big]}{k} \, .
	\end{equation}
	Now we deal with the second term in \eqref{cifra} which is dominated from above by
	\begin{align}
	&\sum_{l=1}^{\infty}\pr\Big[\vartheta_{i,i+1} - T_{i,i+1} >\frac{k}{2} , \big|\pi_i(T_{i,i+1})-\pi_{i+1}(T_{i,i+1})\big|=l\Big|L=j\Big] = \nonumber\\
	&\quad =\sum_{l =1}^{\infty}\pr\Big[\vartheta_{i,i+1} - T_{i,i+1}>\frac{k}{2}\Big|\big|\pi_i(T_{i,i+1})-\pi_{i+1}(T_{i,i+1})\big|=l\Big] \times \nonumber\\
	& \qquad \qquad \qquad \times \pr\Big[\big|\pi_i(T_{i,i+1})-\pi_{i+1}(T_{i,i+1})\big|=l\Big|L=j\Big] \nonumber
	\end{align}
	By Collorary \ref{tau(0,le)} we have that 
	\begin{align}\label{enq:4}
	\pr\Big[\vartheta^{T}_{i,i+1}>\frac{k}{2}\Big|\big|\pi_i(T_{i,i+1})-\pi_{i+1}(T_{i,i+1})\big|=l\Big]\leq\frac{2lC}{\sqrt{k}}. \nonumber
	\end{align}
	Thus 
	\begin{eqnarray}\label{enq:5}
	\pr\big[\vartheta_{i,i+1}>k,T_{i,i+1}\leq\frac{k}{2}\big|L=j\big] & \leq &\sum_{l\geq 1}\frac{2lC}{\sqrt{k}} \, \pr\big[\big|\pi_i(T_{i,i+1})-\pi_{i+1}(T_{i,i+1})\big|=l\big|L=j\big] \nonumber \\ 
	& = & \frac{2C}{\sqrt{k}}\esp\big[\big|\pi_i(T_{i,i+1})-\pi_{i+1}(T_{i,i+1})\big|\big | L=j\Big].
	\end{eqnarray}
	Since  
	\begin{align*}
	&\big|\pi_i(T_{i,i+1})-\pi_{i+1}(T_{i,i+1})\big|\\
	&\le \big|\pi_i(T_{i,i+1})-\pi_{i}(0)\big| + \big|\pi_i(0)-\pi_{i+1}(0)\big| +\big|\pi_{i+1}(0)-\pi_{i+1}(T_{i,i+1})\big|\\
	&\le 2 Z_{i,i+1} +1 \, ,    
	\end{align*}
	we have that \eqref{enq:5} is bounded above by
	\begin{equation}
	\label{enq:5-1}
	\frac{2C}{\sqrt{k}}\esp\big[2Z_{i,i+1}+1\big|L=j\big]=\frac{2C}{\sqrt{k}}\esp\big[2Z_{1,2}+1\big|L=j\big].
	\end{equation}
	By \eqref{cifra}, \eqref{cifra2} and \eqref{enq:5-1}, we have that 
	\begin{eqnarray}\label{enq:6}
	\pr[ \vartheta_{i,i+1}>k | L=j ] & \leq & \frac{2\esp\big[Z_{1,2}\big|L=j\big]}{k}+\frac{2C}{\sqrt{k}}\esp\big[2Z_{1,2}+1\big|L=j\big] \nonumber \\
	& \leq & \frac{2(1+C)\esp\big[2Z_{1,2}+1\big|L=j\big]}{\sqrt{k}}.
	\end{eqnarray}
	Hence by $(\ref{enq:1})$ and $(\ref{enq:6})$,
	\begin{equation}\label{enq:7}
	\pr\big[\big|\eta_{\mathcal{X}}(0,k,0,1)\big|>1\big|L= j \big] \leq \frac{2(1+C)}{\sqrt{k}} \, j \,  \esp[2Z_{1,2}+1 | L=j ].
	\end{equation}
	Replacing $(\ref{enq:7})$ in $(\ref{enq:0})$ we get that $ \pr[|\eta_{\mathcal{X}}(0,k,0,1)|>1]$ is dominated by
	\begin{align}\label{enq:8}
	\frac{2(1+C)}{\sqrt{k}}\sum_{j= 2}^{\infty}j\esp\big[2Z_{1,2}+1\big|L=j\big]\pr[L=j] \, .
	\end{align}
	Note that
	\begin{eqnarray}\label{enq:9}
	\lefteqn{\sum_{j =2}^{\infty}j\esp\big[2Z_{1,2}+1\big|L=j\big]\pr\big[L=j\big]} \nonumber\\
	&\leq & \Big(\sum_{j=2}^{\infty}j^2\pr[L=j]\Big)^{\frac{1}{2}} \, \Big(\sum_{j= 2}^{\infty}\esp\big[2Z_{1,2}+1\big|L=j\big]^2\pr[L=j]\Big)^{\frac{1}{2}}\nonumber\\
	&\leq & \Big(\sum_{j=2}^{\infty}j^2\pr[L=j]\Big)^{\frac{1}{2}} \, \Big(\sum_{j=2}^{\infty}\esp\big[(2Z_{1,2}+1)^2\big|L=j\big]\pr[L=j]\Big)^{\frac{1}{2}} \nonumber \\
	& = & \esp\Big[\big|\mathcal{X}^{0^-}(0)\cap[0,1]\big|^2\Big]^{\frac{1}{2}}\esp\big[(2Z_{1,2}+1)^2\big]^{\frac{1}{2}}.
	\end{eqnarray}
	Take $C_1:=2(1+C)\esp\big[|\mathcal{X}^{0^-}(0)\cap[0,1]|^2]^{\frac{1}{2}}\esp\big[(2Z_{1,2}+1)^2\big]^{\frac{1}{2}}$ which is finite by Lemma \ref{paths in [a,b]} and Lemma \ref{renewals times 3.0}. Replacing $(\ref{enq:9})$ in $(\ref{enq:8})$ we have that
	\begin{align*}
	\pr[|\eta_{\mathcal{X}}(0,k,0,1)|>1]\leq\frac{C_1}{\sqrt{k}},
	\end{align*}
	which completes the proof.
\end{proof}

\medskip

We finish this section with the proof of condition B.

\smallskip

\begin{proof}[Proof of condition $B$ of Theorem \ref{convergence theo}] 
Fix $\epsilon > 0$ and take $M_\epsilon$ as in the statement of Lemma \ref{lemma:condB1}, from that lemma we get that
$$
\sup_{t_0,a\in\R}\pr\big[|\eta_{\mathcal{X}_n}(t_0,t,a-\epsilon,a+\epsilon)|> 1\big]
$$
is bounded above by
$$
\pr\Big[\big|\eta_{\mathcal{X}}(0,n^2\gamma t,n\sigma (a-\epsilon) -M_{\epsilon},n\sigma (a+\epsilon) + M_{\epsilon} )\big|> 1\Big]+\epsilon \, .
$$
Then by Lemma \ref{lemma to the condition B}
\begin{align*}
\sup_{t>\beta}\sup_{t_0,a\in\R}\pr\big[|\eta_{\mathcal{X}_n}(t_0,t,a-\epsilon,a+\epsilon)|> 1\big] \leq\frac{C_1}{n\sqrt{\gamma \beta}}2(n\sigma\epsilon+M_{\epsilon})+\epsilon.
\end{align*}
Hence
\begin{align*}
\limsup_{n \rightarrow \infty} \sup_{t>\beta}\sup_{t_0,a\in\R}\pr\big[|\eta_{\mathcal{X}_n}(t_0,t,a-\epsilon,a+\epsilon)|> 1\big]\leq \Big( \frac{2C_1\sigma}{\sqrt{\beta\gamma}}+ 1 \Big) \epsilon \, \rightarrow 0 \text{ as } \epsilon\rightarrow 0^+.
\end{align*}
So we have condition $B$.
\end{proof}

\bigskip

\section{Condition E}
\label{sec:E}

Following \cite{newman2005convergence}, the verification of condition E is a consequence of Lemmas \ref{seq limit locally finite} and \ref{seq limit coalescing brow motion} just below, which are versions of respectively Lemmas 6.2 and 6.3 in that paper.  

Recall the notation from Section \ref{sec:main}. For a set of paths $Y \subset \Pi$ define
\begin{enumerate}
\item[(i)] $Y^{s^-}$ := the subset of paths in $Y$ that start before or at time $s$;
\item [(ii)] For $A\subset\R$ define  $Y^{s^-,A}:=\{\pi\in Y^{s-}; \pi(s)\in A\}$;
\item[(iii)] For $s\leq t$ and $A\subset\R$ define  $Y^{s-}(t):=\{\pi(t); \pi\in Y^{s-} \}$ and $Y^{s-,A}(t):=\{\pi(t); \pi\in Y^{s-,A} \}$.
\end{enumerate}

\smallskip

\begin{lemma}\label{seq limit locally finite}
Let $Z_{t_0}$ be any subsequential limit of $\{\mathcal{X}_n^{t_0^-}\}$. For any $\epsilon>0$, $Z_{t_0}(t_0+\epsilon)$ is almost surely locally finite and 
 \begin{align*}
 \esp\Big[\big|Z_{t_0}(t_0+\epsilon)\cap(a,b)\big|\Big]\leq\frac{(b-a)C_4}{\sqrt{\epsilon}}.
 \end{align*}
\end{lemma}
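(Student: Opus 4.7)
My plan is to reduce the claim to a uniform upper bound on the prelimit expectation, which in turn will follow from the coalescence estimates of Section~\ref{coalescing time}. Along a subsequence $(n_k)$ realizing $\mathcal{X}_{n_k}^{t_0^-}\Rightarrow Z_{t_0}$, Skorohod's representation theorem allows us to assume $\mathcal{X}_{n_k}^{t_0^-}\to Z_{t_0}$ almost surely in $(\mathcal{H},d_{\mathcal{H}})$. If $Z_{t_0}(t_0+\epsilon)\cap(a,b)$ contains $m$ distinct points $x_1<\dots<x_m$, each is $\pi_i(t_0+\epsilon)$ for some path $(\pi_i,s_i)\in Z_{t_0}$ with $s_i\le t_0$, and by Hausdorff convergence of path sets, for $k$ large there exist $(\pi_i^{(k)},s_i^{(k)})\in\mathcal{X}_{n_k}^{t_0^-}$ with $d((\pi_i^{(k)},s_i^{(k)}),(\pi_i,s_i))\to 0$. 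Since $s_i<t_0+\epsilon$, the definition of $d$ forces $\pi_i^{(k)}(t_0+\epsilon)\to x_i$, and distinctness of the limits $x_1,\dots,x_m$ forces distinctness of the approximants for $k$ large. Therefore for every open $(a',b')\supset[a,b]$,
\[
|Z_{t_0}(t_0+\epsilon)\cap(a,b)|\le\liminf_{k\to\infty}|\mathcal{X}_{n_k}^{t_0^-}(t_0+\epsilon)\cap(a',b')|,
\]
and Fatou's lemma reduces the claim to the uniform prelimit bound $\esp[|\mathcal{X}_n^{t_0^-}(t_0+\epsilon)\cap(a',b')|]\le C(b'-a')/\sqrt{\epsilon}+o(1)$, after which we send $(a',b')\downarrow[a,b]$.

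Undoing the diffusive scaling and using time-translation invariance of the environment, the prelimit bound reduces to showing $\esp[|\mathcal{X}^{0^-}(r)\cap[A,B]|]\le C_5(B-A)/\sqrt r+O(L/\sqrt r)$ with $r\asymp n^2\gamma\epsilon$, $B-A\asymp n\sigma(b'-a')$, and a buffer $L=L(r)$ chosen with $L\gg\sqrt r$ but $L=o(n\sigma)$ after rescaling. Order the distinct values of $\mathcal{X}^{0^-}(r)\cap[A,B]$ as $y_1<\dots<y_N$; each $y_i=\pi^{u_i}(r)$ for a distinct lattice site $u_i$ with $u_i(2)\le 0$, and the integer positions $p_i:=\pi^{u_i}(0)$ are pairwise distinct (otherwise the paths would have already coalesced by time~$0$). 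Using Corollary~\ref{renewal time corollary} and a Markov inequality on the sum of independent between-renewal increments, the event $\{\exists i:p_i\notin[A-L,B+L]\}$ has probability tending to zero faster than $1/\sqrt r$. Conditioning on $\{p_1,\dots,p_N\}\subset[A-L,B+L]$ and applying Corollary~\ref{tau(0,le)} to each consecutive pair $(p_i,p_{i+1})$, two paths starting at distance $p_{i+1}-p_i$ have not coalesced by time $r$ with probability at most $C(p_{i+1}-p_i)/\sqrt r$, so
\[
\esp\big[N\,\big|\,p_1,\dots,p_N\big]\le 1+\frac{C}{\sqrt r}\sum_{i=1}^{N-1}(p_{i+1}-p_i)\le 1+\frac{C(B-A+2L)}{\sqrt r}
\]
by a telescoping sum. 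Taking expectations, together with the linear-in-length bound $\esp[|\mathcal{X}^{0^-}(0)\cap[A-L,B+L]|]=O(B-A+L)$ provided by Lemma~\ref{paths in [a,b]} and translation invariance, yields the prelimit bound.

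The hard part will be the uniform control of the buffer $L$: paths whose time-$0$ positions lie outside $[A-L,B+L]$ can, through rare large displacements, still reach $[A,B]$ by time $r$, and these must be discarded with probability $o(1/\sqrt r)$ while simultaneously $L/\sqrt r$ in unscaled coordinates must translate to $o(1/\sqrt\epsilon)$ in diffusive coordinates. This double constraint is the main technical subtlety; it is handled by the sub-polynomial tails of the between-renewal increments $Z_j$ in Corollary~\ref{renewal time corollary} (which have moments of all orders), via a Chebyshev estimate on the maximal displacement after the renewals accumulating to time $r$. Local finiteness of $Z_{t_0}(t_0+\epsilon)$ on any bounded interval then follows at once from the finite expectation bound, since a positive-probability event of infinite count would force an infinite expectation.
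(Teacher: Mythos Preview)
Your reduction to a prelimit bound via Skorohod representation and Fatou is standard and correct, and your instinct to bound the number of surviving paths by a telescoping sum of non-coalescence probabilities is the right one. However, the argument as written does not deliver the bound in the form stated: your telescoping estimate gives
\[
\esp\bigl[\,|\mathcal{X}^{0^-}(r)\cap[A,B]|\,\bigr]\;\le\; 1+\frac{C(B-A+2L)}{\sqrt r}+(\text{buffer error}),
\]
and the additive $1$ survives the limit $n\to\infty$. After rescaling and sending $(a',b')\downarrow(a,b)$ you obtain at best $\esp[|Z_{t_0}(t_0+\epsilon)\cap(a,b)|]\le 1+C_4(b-a)/\sqrt\epsilon$, which is strictly weaker than the stated $\le C_4(b-a)/\sqrt\epsilon$ (let $b-a\to 0$ with $\epsilon$ fixed). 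The paper kills this $1$ by a translation-invariance/symmetry identity (Lemma~\ref{lemma 1}): one proves
\[
M\,\esp\bigl[|\mathcal{X}^{0^-}(t)\cap[0,1)|\bigr]=\esp\bigl[|\mathcal{X}^{0^-,[0,M)}(t)|\bigr]\le 1+\widetilde C\,M/\sqrt t,
\]
divides by $M$, and sends $M\to\infty$, so the $1$ disappears. Your buffer parameter $L$ does not play this averaging role; you would need to insert an analogous step.

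A secondary but genuine issue: your points $p_i:=\pi^{u_i}(0)$ are in general not integers (the paths are linearly interpolated across time $0$) and are not starting points of GRDF paths, so Corollary~\ref{tau(0,le)} does not apply directly to the pair $(\pi^{u_i},\pi^{u_{i+1}})$ from time $0$; nor is it true in a crossing model that equal values at time $0$ force coalescence. One must first wait for a common renewal time after $0$ (Lemma~\ref{renewals times 3.0}) and control the displacement up to that renewal, exactly as in the proof of Lemma~\ref{lemma to the condition B}; the paper carries this out inside Lemma~\ref{lemma 1} and then wraps it into Lemma~\ref{corollary to E}, from which Lemma~\ref{seq limit locally finite} follows as in \cite{newman2005convergence}.
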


\smallskip

The proof of Lemma \ref{seq limit locally finite} follows from Lemma \ref{corollary to E} below as Lemma 6.2 follows from Lemma 6.4 in \cite{newman2005convergence}.

\smallskip

\begin{lemma}\label{seq limit coalescing brow motion}
Let $Z_{t_0}$ be any subsequential limit of $\{\mathcal{X}_n^{t_0^-}\}$ and $\epsilon>0$. Denote by $Z_{t_0}^{(t_0+\epsilon)_T}$ the set of paths in $Z_{t_0}$ truncated at time $t_0+\epsilon$. Then $\mathcal{Z}_{t_0}^{(t_0+\epsilon)_T}$ is distributed as coalescing Brownian motions starting from the random set $Z_{t_0}(t_0+\epsilon)\subset\R^2$.
\end{lemma}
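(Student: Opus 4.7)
The plan is to adapt the approach from the proof of Lemma 6.3 in \cite{newman2005convergence}. By passing to a subsequence and applying Skorohod's representation theorem, we may assume that along some subsequence $\mathcal{X}_{n_k}^{t_0^-} \to Z_{t_0}$ almost surely in $(\mathcal{H},d_\mathcal{H})$. The goal is then to characterize the conditional distribution of $Z_{t_0}^{(t_0+\epsilon)_T}$ given $Z_{t_0}(t_0+\epsilon)$ via its finite dimensional marginals: if, for any $m\ge 1$ and any finite collection of deterministic times $t_0+\epsilon < s_1 < \dots < s_\ell$, the joint law of $m$ paths from $Z_{t_0}^{(t_0+\epsilon)_T}$ evaluated at these times matches that of coalescing Brownian motions starting from the corresponding $m$ points of $Z_{t_0}(t_0+\epsilon)$, we are done.

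First, I would invoke Lemma \ref{seq limit locally finite}: since $Z_{t_0}(t_0+\epsilon)$ is almost surely locally finite, its points in any compact interval can be enumerated, and each admits a disjoint small open neighborhood. For each path $\pi \in Z_{t_0}^{(t_0+\epsilon)_T}$, the almost sure Hausdorff convergence furnishes an approximating sequence $\pi_{n_k} \in \mathcal{X}_{n_k}^{t_0^-}$ with $\pi_{n_k} \to \pi$ in $(\Pi,d)$, and in particular $\pi_{n_k}(t_0+\epsilon) \to \pi(t_0+\epsilon)$. Second, for deterministic points $(x_1,t_0+\epsilon),\dots,(x_m,t_0+\epsilon)\in\R^2$, Proposition \ref{I} (condition $I$) supplies GRDF paths $\theta_{n_k}^{x_i}\in\mathcal{X}_{n_k}$ such that $(\theta_{n_k}^{x_1},\dots,\theta_{n_k}^{x_m})$ converges jointly in distribution to coalescing Brownian motions starting at $(x_1,\dots,x_m)$.

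The third step combines these: conditional on $Z_{t_0}(t_0+\epsilon)$ intersected with an arbitrary compact interval equaling $\{x_1,\dots,x_m\}$, one couples the approximating paths $\pi_{n_k}$ with the GRDF paths $\theta_{n_k}^{x_i}$ by picking the deterministic $x_i$'s in a dense countable set and localizing each $\pi_{n_k}$ into the small neighborhood of the corresponding $x_i$. Since $\pi_{n_k}(t_0+\epsilon) \to x_i$ and the coalescing time tail estimate of Proposition \ref{tau(u,v)} together with Corollary \ref{tau(0,le)} gives uniform control of how fast two nearby GRDF paths coalesce after $t_0+\epsilon$, one concludes that the joint law of $(\pi_{n_k}^{(1)},\dots,\pi_{n_k}^{(m)})$ restricted to times $\ge t_0+\epsilon$ has the same diffusive limit as $(\theta_{n_k}^{x_1},\dots,\theta_{n_k}^{x_m})$, namely coalescing Brownian motions. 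An approximation argument (letting the neighborhoods shrink and using continuity in the starting point of coalescing Brownian motions) then upgrades this to the conditional statement with the actual random starting set $Z_{t_0}(t_0+\epsilon)$.

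The main obstacle is carrying out the localization in step three rigorously: the starting points of the paths in $Z_{t_0}^{(t_0+\epsilon)_T}$ are random, and one must certify that the approximating GRDF paths (chosen to end up near those random points at time $t_0+\epsilon$) have the same diffusive limit as the GRDF paths starting from nearby deterministic points. This requires the local finiteness given by Lemma \ref{seq limit locally finite} to prevent accumulation of paths near a common limit point, together with the coalescence estimate of Proposition \ref{tau(u,v)} to show that small errors at time $t_0+\epsilon$ are immediately absorbed by coalescence into the target Brownian trajectories. Once this coupling is established, the identification of the conditional finite dimensional distributions follows directly from Proposition \ref{I}.
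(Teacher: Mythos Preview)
Your proposal is correct and follows essentially the same approach as the paper, which simply defers to the proof of Lemma 6.3 in \cite{newman2005convergence} for nonsimple random walks; the ingredients you identify---Skorohod representation, local finiteness from Lemma \ref{seq limit locally finite}, condition $I$ (Proposition \ref{I}), and the coalescing-time estimate of Proposition \ref{tau(u,v)}/Corollary \ref{tau(0,le)}---are precisely those needed to run that argument in the GRDF setting. Your identification of the main technical obstacle (coupling paths with random starting points to those with deterministic ones via the coalescence estimate) is also on target.
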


\smallskip

The proof of Lemma \ref{seq limit coalescing brow motion} is analogous to the proof of Lemma 6.3 in \cite{newman2005convergence} for nonsimple random walks.

\smallskip

The remain of the section is devoted to state and prove Lemma \ref{corollary to E}, but we first need the following result:

\smallskip

\begin{lemma}\label{lemma 1} There exists a constant $C_2$ such that
\begin{align*}
\esp\Big[\Big|\mathcal{X}^{0^-}(t)\cap [0,1) \Big|\Big]\leq\frac{C_2}{\sqrt{t}}
\end{align*}
for all $t>0.$
\end{lemma}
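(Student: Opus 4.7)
The plan is to reduce the counting to paths starting from integer sites at time~$0$ and then apply a leftmost-representative argument combined with Proposition \ref{tau(u,v)}. The key structural observation is that every path $\pi^u \in \mathcal{X}^{0^-}$ with $u(2) \leq 0$ coincides, from its first jump at or above time~$0$, with a path $\pi^{(m,s)}$ for some $m \in \Z$ and $s \geq 0$: paths with a jump exactly at time~$0$ give $s=0$ and merge immediately with some $\pi^{(m,0)}$, while ``transit'' paths that interpolate across time~$0$ give $s > 0$ and contribute an additional term $R(t)$ that I would control using the bounded-displacement estimate of Corollary \ref{renewal time corollary one path} together with Lemma \ref{paths in [a,b]}. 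Thus $|\mathcal{X}^{0^-}(t) \cap [0,1)| \leq |\{\pi^{(m,0)}(t) : m \in \Z\} \cap [0,1)| + R(t)$.

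For the main contribution, transitivity of coalescence ensures that the ``coalesced by time $t$'' equivalence classes on $\{\pi^{(m,0)} : m \in \Z\}$ are intervals of $\Z$, and each such class contributes at most one distinct point to $\{\pi^{(m,0)}(t) : m \in \Z\} \cap [0,1)$. Assigning each class its leftmost element $j$ (which exists a.s., since a left-unbounded class would force $\lim_{m\to -\infty}\pi^{(m,0)}(t) \in [0,1)$, an event of zero probability by the one-path invariance principle, Proposition \ref{one path convergence}), the condition ``$j$ is leftmost'' translates into $\vartheta_{j-1,j} > t$, where $\vartheta_{j-1,j}$ denotes the coalescence time of $\pi^{(j-1,0)}$ and $\pi^{(j,0)}$. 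This yields the almost sure identity
\[
\big|\{\pi^{(m,0)}(t) : m \in \Z\} \cap [0,1)\big| = \sum_{j \in \Z} \mathbbm{1}\big\{\pi^{(j,0)}(t) \in [0,1),\ \vartheta_{j-1,j} > t\big\}.
\]

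Taking expectations and shifting by $-j$ in space, translation invariance of the environment gives
\[
\pr\big[\pi^{(j,0)}(t) \in [0,1),\ \vartheta_{j-1,j} > t\big] = \pr\big[\pi^{(0,0)}(t) \in [-j,-j+1),\ \vartheta_{-1,0} > t\big].
\]
Summing over $j \in \Z$, the fact that $\{\pi^{(0,0)}(t) \in [-j,-j+1)\}_{j \in \Z}$ partitions the sure event collapses the sum to $\pr[\vartheta_{-1,0} > t] = \pr[\vartheta > t]$, which Proposition \ref{tau(u,v)} bounds by $C/\sqrt{t}$. For small $t$ (say $t \leq 1$), the trivial inclusion $\mathcal{X}^{0^-}(t) \subseteq \mathcal{X}^{t^-}(t)$ together with Lemma \ref{paths in [a,b]} gives a finite uniform bound, which fits into $C_2/\sqrt{t}$ after enlarging $C_2$.

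The main technical obstacle will be controlling $\esp[R(t)]$ for large $t$: one must show that transit paths also contribute $O(1/\sqrt{t})$, not merely a bounded constant. The natural route is a secondary coalescence argument: each transit path, after its first post-$0$ jump to an integer point $(x,s)$ with $s$ having bounded moments via Corollary \ref{renewal time corollary one path}, coincides from time $s$ with $\pi^{(x,s)}$; by Corollary \ref{tau(0,le)} this path coalesces with $\pi^{(x,0)}$ within time $t-s$ except on an event of probability $O(1/\sqrt{t-s})$, so transit contributions eventually merge with the integer-starting main term up to an $O(1/\sqrt{t})$ error.
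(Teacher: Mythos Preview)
Your leftmost-representative argument for the paths $\{\pi^{(m,0)}:m\in\Z\}$ is elegant and the translation-invariance collapse to $\pr[\vartheta>t]$ is correct. Two points, however, deserve attention.

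First, a minor correction: the claim that the coalescence classes are \emph{intervals} of $\Z$ relies on non-crossing, which fails in the GRDF. Transitivity alone does not give intervals; one can have $\pi^{(0,0)}$ and $\pi^{(2,0)}$ coalesced by time $t$ while $\pi^{(1,0)}$ has crossed over $\pi^{(2,0)}$ and escaped. Fortunately your argument only needs the \emph{inequality}
\[
\big|\{\pi^{(m,0)}(t):m\in\Z\}\cap[0,1)\big|\le\sum_{j\in\Z}\mathbbm{1}\{\pi^{(j,0)}(t)\in[0,1),\ \vartheta_{j-1,j}>t\},
\]
which does hold: for each distinct value $v$ in the left-hand set, the minimal $j$ with $\pi^{(j,0)}(t)=v$ (whose existence you correctly justify) satisfies $\vartheta_{j-1,j}>t$ and is counted on the right. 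So the main term goes through.

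The real gap is the remainder $R(t)$. Your sketch invokes Corollary~\ref{tau(0,le)} for $\pi^{(x,s)}$ versus $\pi^{(x,0)}$, but that corollary compares paths starting at the \emph{same time}; here the starting times differ. More seriously, even a correct pairwise coalescence estimate of order $1/\sqrt{t}$ would have to be summed over all transit paths that land in $[0,1)$ at time $t$, and a naive count (density $O(1)$ of transit paths per unit length at time $0$, spread over a window of width $O(\sqrt{t})$ that can reach $[0,1)$ by time $t$) yields $O(1)$ rather than $O(1/\sqrt{t})$. Getting the extra decay requires, in effect, running your leftmost trick simultaneously on the transit paths \emph{and} the integer-starting paths, and at that point the clean translation-invariance collapse is lost.

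The paper sidesteps this entirely by a different route: it never separates transit from non-transit paths. Instead it uses the identity $M\,\esp\big[|\mathcal{X}^{0^-}(t)\cap[0,1)|\big]=\esp\big[|\mathcal{X}^{0^-,[0,M)}(t)|\big]$, conditions on the (random) number $j\ge M$ of paths in $\mathcal{X}^{0^-}$ crossing $[0,M)$ at time $0$, and bounds the conditional expectation by $1+\sum_{i=1}^{j-1}\pr[\vartheta_{i,i+1}>t\mid\cdot]$. Here Lemma~\ref{renewals times 3.0} supplies renewal times for \emph{arbitrary} pairs of paths in $\mathcal{X}^{0^-}$ (transit or not), and the coalescence estimate from Section~\ref{coalescing time} applies after the first common renewal. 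After Cauchy--Schwarz this gives $M\,\esp[\cdot]\le 1+C_2 M/\sqrt{t}$; dividing by $M$ and sending $M\to\infty$ kills the constant $1$. The averaging trick is exactly what absorbs the contribution that in your decomposition appears as $R(t)$.
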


\begin{proof}
Fix $M \in \mathbb{N}$. By translation invariance
\begin{align*}
&M \esp\Big[\Big|\mathcal{X}^{0^-}(t)\cap[0,1)\Big|\Big] = \esp\Big[\Big|\mathcal{X}^{0^-}(t)\cap [0,M)\Big|\Big]\\
&\qquad \qquad =\sum_{i\in\Z}\esp\Big[\Big|\mathcal{X}^{0^-,[iM,(i+1)M)}(t)\cap [0,M) \Big|\Big]\\
&\qquad \qquad =\sum_{i\in\Z}\esp\Big[\Big|\mathcal{X}^{0^-,[0,M)}(t)\cap [iM,(i+1)M)\big)\Big|\Big] = \esp\Big[\Big|\mathcal{X}^{0^-,[0,M)}(t)\Big|\Big] \, ,
\end{align*}
where the third equality above also uses the symmetry of GRDF paths. Since $\mathcal{X}^{0^-,[0,M)}(0)$ has at least $M$ points which are 0, 1, 2, ..., M-1, then
\begin{align*}
&M \esp\Big[\Big|\mathcal{X}^{0^-}(t)\cap[0,1)\Big|\Big] \\
&\qquad \qquad  =\sum_{j=M}^{\infty}\esp\Big[\big|\mathcal{X}^{0^-,[0,M)}(t)\big|\Big|\big|\mathcal{X}^{0^-,[0,M)}(0)\big|=j\Big]\pr\Big[\big|\mathcal{X}^{0^-,[0,M)}(0)\big|=j\Big] \, .
\end{align*}
From here the proof is very close to that of Lemma \ref{lemma to the condition B}, given $|\mathcal{X}^{0^-}(0)\cap[0,M)|=j$, $j\ge M$, let $\pi_1,\dots,\pi_j$ be the paths in $\mathcal{X}^{0^-}$ such that $0 \le \pi_1(0) < \pi_2(0) < ... < \pi_j(0) < M$ for $i=1,\dots,j$ and define
\begin{align*}
    \vartheta_{i,i+1}:=\inf\{n\geq 1: \pi_i(t)=\pi_{i+1}(t), \text{ for all } t\geq n\}.
\end{align*}  
Then 
\begin{align*} \esp\Big[\big|\mathcal{X}^{0^-,[0,M)}(t)\big|\Big|\big|\mathcal{X}^{0^-,[0,M)}(0)\big|=j\Big]\leq\esp\Big[1+\sum_{i=1}^{j-1}1_{\{\vartheta_{i,i+1}>t\}}\Big||\mathcal{X}^{0^-,[0,M)}(0)\big|=j\Big].
\end{align*}
Note that $|\pi_i(0)-\pi_{i+1}(0)| \le 1$ because $0,1,...,M-1 \in \mathcal{X}^{0^-,[0,M)}(0)$, then by Lemma \ref{renewals times 3.0} and (\ref{enq:6}) we have that there exists a constant $C >0$ and a integrable random variable $Z$, both not depending on $M$, such that 
\begin{align*}
   P\big[\vartheta_{i,i+1}>t\big||\mathcal{X}^{0^-,[0,M)}(0)\big|=j\big]&\leq\frac{2(1+C)}{\sqrt{t}}\esp\big[2Z+1\big||\mathcal{X}^{0^-,[0,M)}(0)\big|=j\big]\\
    &\leq\frac{2(1+C)}{\sqrt{t}}\esp\big[3Z\big||\mathcal{X}^{0^-,[0,M)}(0)\big|=j\big]\\
    &=\frac{\widetilde{C}}{\sqrt{t}}\esp\big[Z\big||\mathcal{X}^{0^-,[0,M)}(0)|=j\big].
\end{align*}
Hence
$$
M\esp\big[\big|\mathcal{X}^{0^-}(t)\cap[0,1)\big|\big]\leq1+\frac{\widetilde{C}}{\sqrt{t}}\sum_{j=M}^{\infty}j\esp\big[Z\big||\mathcal{X}^{0^-,[0,M)}(0)|=j\big]\pr\big[|\mathcal{X}^{0^-,[0,M)}(0)|=j\big]
$$
which, as in \eqref{enq:9}, can be shown to be bounded above by
\begin{align*}    
    &1+\frac{\widetilde{C}}{\sqrt{t}}\Big(\esp\Big[\big|\mathcal{X}^{0^-,[0,M)}(0)\big|^2\Big]\Big)^{\frac{1}{2}}\Big(\esp[(Z)^2]\Big)^{\frac{1}{2}} \\ 
    &\qquad \qquad  \leq 1+\frac{\widetilde{C}}{\sqrt{t}}\Big(M\esp\Big[\sum_{i=1}^{M}\big|\mathcal{X}^{0^-,[i-1,i]}(0)\big|^2\Big]\Big)^{\frac{1}{2}}\Big(\esp[(Z)^2]\Big)^{\frac{1}{2}}\\
    &\qquad \qquad = 1+\frac{\widetilde{C}}{\sqrt{t}}M\Big(\esp\Big[\big|\mathcal{X}^{0^-,[0,1]}(0)\big|^2\Big]\Big)^{\frac{1}{2}}\Big(\esp[(Z)^2]\Big)^{\frac{1}{2}} \, .
\end{align*}
Thus 
$$
\esp\Big[\Big|\mathcal{X}^{0^-}(t)\cap[0,1)\Big|\Big]\leq \frac{1}{M} + \frac{C_2}{\sqrt{t}} \, ,
$$
where $C_2:=\widetilde{C}\Big(\esp[(Z)^2]\Big)^{\frac{1}{2}}\Big(\esp\Big[\big|\mathcal{X}^{0^-,[0,1]}(0)\big|^2\Big]\Big)^{\frac{1}{2}}$ which is finite by Lemma \ref{paths in [a,b]}. Since $M$ is arbitrary we obtain the bound in the statement. \end{proof}

\medskip

\begin{lemma}\label{corollary to E} There exists a constant $C_3 > 0$ such that
\begin{align*}
\esp\Big[\big|\mathcal{X}_n^{0^-}(t)\cap[0,M)\big|\Big]\leq\frac{MC_3}{\sqrt{t}} \, ,
\end{align*}
for every $n\ge 1$, $M\ge 1$ and $t>0$.
\end{lemma}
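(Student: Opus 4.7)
The plan is to reduce the bound on the rescaled system to Lemma \ref{lemma 1} via the scaling that defines $\mathcal{X}_n$. Since $\pi_n^u(t) = \pi^u(n^2\gamma t)/(n\sigma)$ and $\pi_n^u$ starts at time $u(2)/(n^2\gamma)$, a path $\pi_n^u$ lies in $\mathcal{X}_n^{0^-}$ if and only if $u(2)\le 0$, which is equivalent to $\pi^u \in \mathcal{X}^{0^-}$. Moreover, $\pi_n^u(t)\in[0,M)$ if and only if $\pi^u(n^2\gamma t)\in[0,Mn\sigma)$. This gives the exact identity
\begin{align*}
\big|\mathcal{X}_n^{0^-}(t)\cap[0,M)\big| \;=\; \big|\mathcal{X}^{0^-}(n^2\gamma t)\cap[0,Mn\sigma)\big| \, ,
\end{align*}
so the whole problem is reduced to estimating the right-hand side.

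Next I would use the spatial translation invariance of the GRDF together with Lemma \ref{lemma 1} to handle intervals of arbitrary length. For any $s>0$ and any integer $j$, stationarity yields $\esp\big[|\mathcal{X}^{0^-}(s)\cap[j,j+1)|\big]\le C_2/\sqrt{s}$, and summing over $j=0,1,\dots,\lceil L\rceil -1$ gives
\begin{align*}
\esp\Big[\big|\mathcal{X}^{0^-}(s)\cap[0,L)\big|\Big] \;\le\; \lceil L\rceil\,\frac{C_2}{\sqrt{s}} \;\le\; (L+1)\,\frac{C_2}{\sqrt{s}}
\end{align*}
for every $L>0$. Plugging in $s=n^2\gamma t$ and $L=Mn\sigma$ produces
\begin{align*}
\esp\Big[\big|\mathcal{X}_n^{0^-}(t)\cap[0,M)\big|\Big] \;\le\; \big(Mn\sigma+1\big)\,\frac{C_2}{n\sqrt{\gamma t}} \;=\; \frac{C_2}{\sqrt{\gamma t}}\Big(M\sigma+\frac{1}{n}\Big).
\end{align*}

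Finally, since $n\ge 1$ and $M\ge 1$ we have $M\sigma+\tfrac{1}{n}\le (\sigma+1)M$, so setting $C_3:=C_2(\sigma+1)/\sqrt{\gamma}$ yields the desired bound. There is essentially no obstacle here once one notes the scaling identity; all the analytic work has been done in Lemma \ref{lemma 1}, and the current lemma is just a deterministic rescaling plus an interval-decomposition argument using stationarity.
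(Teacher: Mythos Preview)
Your proof is correct and follows essentially the same approach as the paper: first rewrite the rescaled quantity as an unscaled one via the scaling identity, then apply translation invariance together with Lemma \ref{lemma 1}. If anything, your version is slightly more careful than the paper's, which writes $\esp\big[|\mathcal{X}^{0^-}(n^2\gamma t)\cap [0,n\sigma M)|\big]=n\sigma M\,\esp\big[|\mathcal{X}^{0^-}(n^2\gamma t)\cap [0,1)|\big]$ as if $n\sigma M$ were an integer, whereas you handle this via the ceiling and the bound $\lceil L\rceil\le L+1$.
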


\begin{proof}
Using  Lemma \ref{lemma 1} we have that for all $n\geq 1$ 
\begin{align*}
\esp\Big[\big|\mathcal{X}^{0^-}(n^2\gamma t)\cap [0,n\sigma M)\big|\Big]&=n\sigma M\esp\Big[\big|\mathcal{X}^{0^-}(n^2\gamma t)\cap [0, 1)\big|\Big]\\
&\leq\frac{n\sigma M C_2}{\sqrt{n^2\gamma t}} = \frac{\gamma^{-1/2} \sigma M C_2}{\sqrt{t}}.
\end{align*}
\end{proof}

\medskip

\section{Condition T}
\label{sec:T}

In this section we prove condition $T$ in Theorem \ref{convergence theo}, which follows from Proposition \ref{tightness} at the beggining of this section. The idea behind the proof comes from \cite{newman2005convergence}. Only technical details related to the renewal times impose an extra difficult, even though,  we present it here for the sake of completeness. 

\smallskip

By homogeneity of the GRDF all the estimates on $A_{\mathcal{X}_n}(x_0,t_0;\rho,t)$ are uniform on $(x_0,t_0)\in\Z^2$. Here we only consider $(x_0,t_0) = (0,0)$ leaving the verification for other choices of $(x_0,t_0)$ to the reader. The case $n \gamma t_0 \notin \mathbb{Z}$ demands an extra care, but can be dealt analogously as done on previous sections to deal with paths crossing some time level not necessarily on the rescaled space/time lattice. With this in mind, condition T is a consequence of the next result.

\smallskip

\begin{proposition}\label{tightness}Denote by $A_{\mathcal{X}_n}^+(x_0,t_0;\rho,t)$ the event that $\mathcal{X}_n$ contains a path touching  both $R(x_0,t_0;\rho,t)$ and the right boundary of the rectangle $R(x_0,t_0;20\rho,4t)$. Then
\begin{align*}
\lim_{t\rightarrow 0^+}\frac{1}{t}\limsup_{n\rightarrow \infty} \pr\Big[A^+_{\mathcal{X}_n}(0,0;\rho,t)\Big]=0.    
\end{align*}
\end{proposition}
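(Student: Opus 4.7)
My approach follows the strategy of \cite{newman2005convergence}, with technical modifications to accommodate the non-Markovian nature of the GRDF, relying on the renewal structure of Section \ref{sec:renewal} and the coalescing-time bounds of Section \ref{coalescing time}. The key observation is that on $A^+_{\mathcal{X}_n}(0,0;\rho,t)$, some path of $\mathcal{X}_n$ performs a rightward displacement of at least $19\rho$ within a time interval of length at most $4t$. I would identify a small random collection of ``representative'' paths such that at least one of them must realize such a large excursion, and then bound the probability per representative path using the invariance principle (Proposition \ref{one path convergence}) and a Brownian maximal inequality.

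Concretely, let $\mathcal{N}$ be the (random) set of distinct paths of $\mathcal{X}_n^{0^-}$ whose position at time $0$ lies in a slightly enlarged strip, say $[-2\rho,2\rho]$. Lemma \ref{paths in [a,b]} provides a uniform bound on $\esp[|\mathcal{N}|]$. Any path $\pi\in\mathcal{X}_n$ touching the small rectangle $R(0,0;\rho,t)$ either belongs to $\mathcal{N}$, or starts inside $[-\rho,\rho]\times(0,t]$, or enters the strip $[-\rho,\rho]$ from outside $[-2\rho,2\rho]$ at time $0$. In the latter two cases, Corollary \ref{tau(0,le)} shows that $\pi$ coalesces with some member of $\mathcal{N}$ on a time-scale negligible compared with $4t$, so the event that $\pi$ realizes the excursion without first merging with a representative contributes only $o(t)$ as $t\to 0^+$ (uniformly in $n$).

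For each representative path, by the invariance principle and the reflection principle for Brownian motion, the limiting probability that it reaches position at least $19\rho$ within time $4t$ is at most $4\exp(-c\rho^2/t)$ for a suitable constant $c>0$, which is super-polynomially small in $t$. Some technical adjustment is needed to handle the time between the start of the path and its first renewal, but these corrections are controlled via the moment bounds on the renewal displacement $Z$ in Lemma \ref{tpr}. A union bound then yields
$$
\limsup_{n\to\infty}\pr\bigl[A^+_{\mathcal{X}_n}(0,0;\rho,t)\bigr]\;\leq\; C\,\esp[|\mathcal{N}|]\,\exp(-c\rho^2/t)\;+\;o(t),
$$
and dividing by $t$ and letting $t\to 0^+$ kills both terms (the first vanishing super-polynomially).

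The main obstacle is the reduction in the previous paragraph: controlling the contribution of paths of $\mathcal{X}_n$ that do not belong to $\mathcal{X}_n^{0^-}$, or whose starting points at time $0$ lie outside $[-2\rho,2\rho]$. The non-Markovianity of the GRDF and the possibility of crossings (rather than only coalescence, as in \cite{roy2013random}) is what forces the use of the detailed renewal machinery of Sections \ref{sec:renewal} and \ref{coalescing time}, instead of the simpler monotonicity arguments available in fully Markovian coalescing systems; in particular, the polynomial tail bound of Proposition \ref{tau(u,v)} must be delicately balanced against the Gaussian tail provided by the invariance principle.
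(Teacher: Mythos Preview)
Your plan has a real gap at the very first step. You define $\mathcal{N}$ as the set of paths of $\mathcal{X}_n^{0^-}$ whose position at rescaled time $0$ lies in $[-2\rho,2\rho]$, and claim Lemma~\ref{paths in [a,b]} bounds $\esp[|\mathcal{N}|]$ uniformly in $n$. It does not: that lemma is for \emph{unrescaled} $\mathcal{X}$ and a fixed interval. Under diffusive scaling, $[-2\rho,2\rho]$ corresponds to the unrescaled interval $[-2n\sigma\rho,2n\sigma\rho]$, and since every integer in that range is the starting point of a path, $|\mathcal{N}|\ge 4n\sigma\rho$ deterministically. Your union bound therefore reads $n\cdot\exp(-c\rho^2/t)$, which diverges in $n$ for every fixed $t$, so no conclusion follows. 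The same counting problem arises for paths starting in $[-\rho,\rho]\times(0,t]$: there are $\sim n^3$ such starting points, and Corollary~\ref{tau(0,le)} only gives a $1/\sqrt{k}$ coalescence tail per pair, far too weak to control their union. Finally, even when a non-representative path does eventually coalesce with a member of $\mathcal{N}$, nothing in your outline prevents it from reaching $20\rho$ \emph{before} coalescing, since paths in the GRDF can cross; the sandwiching argument implicit in your reduction is unavailable here.

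The paper's proof avoids this by never reducing to a bounded collection of representatives. Instead it plants four deterministic ``barrier'' paths $\pi_1,\dots,\pi_4$ at spatial positions $5\lfloor n\rho\sigma\rfloor,9\lfloor n\rho\sigma\rfloor,13\lfloor n\rho\sigma\rfloor,17\lfloor n\rho\sigma\rfloor$, and restricts to the event that each barrier stays within $n\rho\sigma$ of its origin (controlled by the Gaussian tail, yielding the $o(t)$ term). On that event, any path starting in $R(0,0;2n\rho\sigma,2n^2t\gamma)$ that reaches the right boundary must overtake all four barriers. The crucial input---absent from your sketch---is Lemma~\ref{tau(x,y,u^+)}: the probability that two GRDF paths separate by $n\rho\sigma$ before either coalescing or time $n^2t\gamma$ is $O(1/n)$, not $O(1/\sqrt{n})$. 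Combined with the strong Markov property at renewals and Lemma~\ref{tau_0^+} to control overshoots, this gives probability $O(1/n^4)$ per starting point; summing over the $\sim n^3$ starting points in the doubled rectangle (Lemma~\ref{D(nu)} handles paths that enter without landing there) yields $O(1/n)\to 0$. The $1/n$ (rather than $1/\sqrt{n}$) rate in Lemma~\ref{tau(x,y,u^+)} is what makes the entire argument close, and your outline has no analogue of it.
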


\smallskip

Before we prove Proposition \ref{tightness} we need some lemmas. The first one gives an uniform bound on the overshoot distribution on the renewal times for paths in the GRDF.

\medskip

\begin{lemma}\label{tau_0^+}
For $x\in\Z_{-}$ let $\{T_i^x:i\geq 1\}$ be a sequence of renewal times from Corollary \ref{renewal time corollary one path} for the path that starts in $(x,0)$. Define $\{Y_i^x:i\geq 1\}$ as the first component of the path $\pi^{(x,0)}$ on the renewal time $T_i^x$, i.e.
$$
Y_i^x = X^{(x,0)}_{T_i^x}.
$$
Also define $\nu_+^x=\inf\{n\geq 1: Y_n^x\geq 1\}$. Then we have
$$
  \sup_{x\in\Z_-}\esp\big[(Y^x_{\nu_+^x})^k\big]<\infty,
$$
for all $k\geq 1$.
\end{lemma}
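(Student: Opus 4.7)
The plan is to bound $Y^x_{\nu_+^x}$ by a single increment of the random walk and then to control the tail of that increment via a conditioning argument combined with a potential-theoretic estimate. By Corollary \ref{renewal time corollary one path} together with the left--right symmetry of the GRDF, the increments $\zeta_i := Y_i^x - Y_{i-1}^x$ (with the convention $Y_0^x := x$) are i.i.d., mean-zero, integer-valued, and in $L^p$ for every $p \ge 1$; moreover $\{\nu_+^x \ge n\}$ is $\sigma(\zeta_1,\dots,\zeta_{n-1})$-measurable, so $\zeta_n$ is independent of it.

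The first step is the elementary bound $Y^x_{\nu_+^x} \le \zeta_{\nu_+^x}$. For $\nu_+^x = 1$ this is $Y_1^x = x + \zeta_1 \le \zeta_1$ because $x \le 0$; for $\nu_+^x \ge 2$ the minimality of $\nu_+^x$ forces $Y^x_{\nu_+^x - 1} \le 0$ (the walk is integer valued), hence again $Y^x_{\nu_+^x} \le \zeta_{\nu_+^x}$. It therefore suffices to bound $\esp[\zeta_{\nu_+^x}^k]$ uniformly in $x \in \Z_-$.

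Next, partitioning on $\{\nu_+^x = n\}$ and conditioning on $\mathcal{F}_{n-1} := \sigma(\zeta_1,\dots,\zeta_{n-1})$, together with the inclusion $\{\nu_+^x = n\} \subset \{\nu_+^x \ge n,\ \zeta_n \ge 1 - Y_{n-1}^x\}$, one obtains
\begin{equation*}
\esp\big[\zeta_n^k \mathbbm{1}_{\{\nu_+^x = n\}}\big] \le \esp\big[\mathbbm{1}_{\{\nu_+^x \ge n\}}\,h(Y_{n-1}^x)\big], \qquad h(y) := \esp\big[\zeta^k \mathbbm{1}_{\{\zeta \ge 1-y\}}\big].
\end{equation*}
A Markov-type bound with the $(k+m)$-th moment yields $h(y) \le (1-y)^{-m}\esp[|\zeta|^{k+m}]$ for $y \le 0$ and any $m \ge 1$. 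Summing over $n$,
\begin{equation*}
\esp[(Y^x_{\nu_+^x})^k] \le \esp[|\zeta|^{k+m}]\,\esp\Big[\sum_{n=1}^{\nu_+^x}(1+|Y_{n-1}^x|)^{-m}\Big].
\end{equation*}

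The key technical step is to show that the expectation on the right is uniformly bounded in $x \in \Z_-$ for $m$ sufficiently large (say $m=3$). Rewriting it as $\sum_{y\le 0} G^x(y)(1+|y|)^{-m}$, where $G^x(y)$ is the Green's function of the walk killed on first entry into $\N$, one invokes the classical potential-theoretic bound $G^x(y) \le C \min(1+|x|,\, 1+|y|)$ for centered $L^2$ random walks on $\Z$; splitting the sum into $|y|\le|x|$ and $|y|>|x|$ then shows the total is $O(1)$ uniformly in $x$ as soon as $m \ge 3$. The main obstacle is exactly this Green's function estimate: while classical for simple random walks, for our general mean-zero walk on $\Z$ with finite variance it has to be imported from Spitzer's potential theory, or else derived here via an optional-stopping argument applied to $|S_n|^2 - n\esp[\zeta^2]$ at the exit time from a large interval.
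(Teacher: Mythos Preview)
Your argument is correct. The paper's own proof, however, is a one-line citation: it invokes Lemma~2.6 of \cite{newman2005convergence} (reproduced in the paper as Lemma~\ref{overshoot UI}), which states that for any mean-zero random walk on $\Z$ whose increment has a finite $(k{+}2)$-th moment, the family $\{(S^x_{\nu_+^x})^k : x\in\Z_-\}$ is uniformly integrable. Since the renewal increments here have all moments by Corollary~\ref{renewal time corollary one path}, the lemma applies directly.

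Your route is genuinely different: rather than quoting the overshoot lemma, you reprove a quantitative version of it. Your conditioning step and the bound $h(y)\le(1-y)^{-m}\esp[|\zeta|^{k+m}]$ are clean, and reducing to $\sum_{y\le0}G^x(y)(1+|y|)^{-m}$ is the right move. The trade-off is that you shift the black box from the overshoot lemma to the half-line Green's function estimate $G^x(y)\le C\min(1+|x|,1+|y|)$. That estimate is indeed classical for mean-zero, finite-variance walks (it follows from Wiener--Hopf factorization and the renewal theorem for ladder heights, or can be found in Spitzer), but your optional-stopping sketch with $|S_n|^2-n\sigma^2$ does not by itself deliver it: that martingale controls $\esp_x[\tau]$ for exit times from bounded intervals, not the pointwise occupation measure. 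So if you keep this approach, cite the Green's function bound rather than suggesting it can be derived by optional stopping alone. In terms of economy, the paper's citation is shorter; your argument has the merit of exposing exactly which random-walk input (finite $(k{+}m)$-th moment plus the half-line Green's function bound) drives the uniform overshoot control.
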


\begin{proof}
The proof follows from Lemma \ref{overshoot UI} just below which is Lemma $2.6$ in \cite{newman2005convergence} where it is proved. 
\end{proof}

\begin{lemma}\label{overshoot UI} (Lemma $2.6$ in \cite{newman2005convergence})
Let $(S^x_n)_{n\geq 0}$ be a random walk with increments distributed as a random variable $Z$ such that it starts from $x\in\Z_-$ at time $0$. If $\esp[|Z|^{k+2}]<\infty$ then $\{(S^x_{\nu^x_+})^k\}_{z\in\Z_-}$, where $\nu_+^x=\inf\{n\geq 1; S_n^x>0\}$, is uniformly integrable.
\end{lemma}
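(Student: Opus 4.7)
The plan is to reduce the overshoot to the ascending ladder-height overshoot of the walk, bound the truncated $k$-th moment of that overshoot uniformly using the renewal measure, and then invoke a classical moment-transfer between the increment distribution and the ladder-height distribution.

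First, since $S^x_{\nu^x_+-1}\leq 0$ while $S^x_{\nu^x_+}>0$, the overshoot is dominated by the increment that caused the crossing: $S^x_{\nu^x_+}\leq Z_{\nu^x_+}$. A more workable dominant comes from the ascending ladder structure. Let $T_1=\inf\{n\geq 1: S^x_n-S^x_0>0\}$ and, inductively, $T_{i+1}=\inf\{n>T_i: S^x_n>S^x_{T_i}\}$; set $H_i=S^x_{T_i}-S^x_{T_{i-1}}$. By independence of the increments, $(H_i)_{i\ge 1}$ is an i.i.d.\ sequence of positive integer valued random variables whose common law depends only on $Z$. Writing $R_m=H_1+\cdots+H_m$ and $N(a)=\inf\{m\geq 1:R_m>a\}$ for $a=-x\geq 0$, one has $S^x_{\nu^x_+}\leq H_{N(a)}$. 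Consequently, uniform integrability of $\{(S^x_{\nu^x_+})^k\}_{x\in\Z_-}$ is implied by uniform integrability in $a\geq 0$ of $\{H_{N(a)}^k\}$.

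Next I would control the truncated moment using a first-step decomposition of $N(a)$. Since $\{N(a)\geq m\}=\{R_{m-1}\leq a\}\in\sigma(H_1,\ldots,H_{m-1})$ and $H_m$ is independent of that $\sigma$-algebra, conditioning and summing over $m$ gives
\begin{align*}
\esp\bigl[H_{N(a)}^k\,\mathbf{1}_{H_{N(a)}^k>M}\bigr]
\;=\;\sum_{j=0}^{a} U(j)\,\esp\bigl[H^k\,\mathbf{1}_{H>a-j}\,\mathbf{1}_{H^k>M}\bigr],
\end{align*}
where $U(j)=\sum_{m\geq 0}\pr[R_m=j]$ is the renewal mass function. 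Once $\esp[H]<\infty$, Blackwell's renewal theorem yields $\sup_{j\ge 0}U(j)<\infty$. A Fubini computation then gives
\begin{align*}
\sum_{j=0}^{\infty}\esp\bigl[H^k\,\mathbf{1}_{H>j}\,\mathbf{1}_{H^k>M}\bigr]
\;=\;\esp\Bigl[H^k\,\mathbf{1}_{H^k>M}\sum_{j=0}^{\infty}\mathbf{1}_{H>j}\Bigr]
\;=\;\esp\bigl[H^{k+1}\,\mathbf{1}_{H^k>M}\bigr],
\end{align*}
so $\sup_{a\geq 0}\esp[H_{N(a)}^k\,\mathbf{1}_{H_{N(a)}^k>M}]\leq C\,\esp[H^{k+1}\,\mathbf{1}_{H^k>M}]$, which goes to zero as $M\to\infty$ by dominated convergence, provided $\esp[H^{k+1}]<\infty$.

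The final, and essential, ingredient is the moment-transfer implication $\esp[|Z|^{k+2}]<\infty\Rightarrow\esp[H^{k+1}]<\infty$. In the present setting the relevant walks $Y^x$ are mean-zero (they are square-integrable martingales), so one can invoke the classical inequality of Chow--Spitzer: for a zero-mean random walk the ascending ladder height satisfies $\esp[H^{p}]<\infty$ whenever $\esp[|Z|^{p+1}]<\infty$, applied here with $p=k+1$. The main obstacle is precisely this step: passing the $(k+2)$-th moment of the single-step distribution to the $(k+1)$-th moment of the ladder height is a nontrivial renewal-theoretic result, whereas the conditioning and renewal-measure bookkeeping in the second paragraph, and the domination by $H_{N(a)}$ in the first, are routine once this input is granted.
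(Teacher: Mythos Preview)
The paper does not actually prove this lemma: it is stated verbatim as Lemma~2.6 of \cite{newman2005convergence} and the reader is sent there for the proof. Your proposal, by contrast, supplies a full argument, and that argument is correct. The identification $S^x_{\nu^x_+}=R_{N(a)}-a\le H_{N(a)}$ with $a=-x$ recasts the overshoot over level $0$ as a renewal overshoot in the ascending ladder-height sequence; the first-passage decomposition together with the renewal bound $\sup_j U(j)<\infty$ then reduces uniform integrability to the single estimate $\esp[H^{k+1}\,\mathbf 1_{H^k>M}]\to 0$. The only substantial external input, which you rightly isolate, is the ladder-height moment transfer $\esp[|Z|^{k+2}]<\infty\Rightarrow\esp[H^{k+1}]<\infty$ for mean-zero walks; this is a classical result (see, e.g., Doney or Gut's \emph{Stopped Random Walks}) and is exactly what the proof in \cite{newman2005convergence} also relies on. One minor remark: the lemma as stated does not assume $\esp[Z]=0$, but your restriction to the zero-mean case is the right one, since in the positive-drift case the moment transfer costs no extra moment and the negative-drift case is vacuous; this matches the application in Lemma~\ref{tau_0^+}, where the increments are centred.
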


\smallskip

A path in the GRDF is obtained from linear interpolation between open points in the envinroment, we say that these open points defining the path are the ones visited by the path. The next Lemma states that the probability of having paths that cross a box $R(0,0;n\rho\sigma,n^2t\gamma)$ but do not visit any point in $R(0,0;2n\rho\sigma,2n^2t\gamma)$ goes to zero as $n \rightarrow \infty$.

\medskip

\begin{lemma}\label{D(nu)} Let $D(n\rho\sigma,n^2t\gamma)$ be the event that paths in $\mathcal{X}$ cross $R(0,0;n\rho\sigma,n^2t\gamma)$ without visit any point in $R(0,0;2n\rho\sigma,2n^2t\gamma)$, then
\begin{align*}
    \lim_{n \rightarrow \infty} \pr\Big[D(n\rho\sigma,n^2t\gamma)\Big]=0.
\end{align*}
\end{lemma}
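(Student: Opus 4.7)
The plan is to deduce from the event $D(n\rho\sigma,n^2t\gamma)$ the existence of a single jump of some GRDF path of $L_1$-length at least $\min(n\rho\sigma,n^2t\gamma)$, and then to combine the exponential tail of the length of a single jump with a union bound over possible pre-jump sites. Write $A=n\rho\sigma$, $B=n^2t\gamma$, $R_1=R(0,0;A,B)$ and $R_2=R(0,0;2A,2B)$. If some $\pi\in\mathcal{X}$ realizes $D(A,B)$, then $\pi$ contains two consecutive visited sites $v_1,v_2\in\mathbb{Z}^2$ (with $v_2=X[v_1]$), both outside $R_2$, whose connecting linear segment passes through some point $p\in R_1$. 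Since the segment is straight, $\|v_2-v_1\|_1=\|v_1-p\|_1+\|p-v_2\|_1\ge d(v_1,R_1)+d(v_2,R_1)$, where $d(\cdot,R_1)$ denotes the $L_1$-distance to $R_1$.

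Then I would verify the geometric estimate $d(v_2,R_1)\ge\min(A,B)$. Time monotonicity of the segment forces $v_2(2)\ge 0$, and $v_2\notin R_2$ together with $v_2(2)\ge 0$ means either $|v_2(1)|>2A$ (giving $d(v_2,R_1)\ge A$) or $v_2(2)>2B$ (giving $d(v_2,R_1)\ge B$). Hence every such jump satisfies
\[
\|v_2-v_1\|_1\ge d(v_1,R_1)+\min(A,B).
\]

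For the probability of a long jump from a fixed site $v$, since $X[v]$ lies on the $W_v$-th open level above $v$ and $W_v\le K$ almost surely, $\|X[v]-v\|_1\le h(v,K)$. The level $L(v,j)$ contains $2j-1$ independently opened sites, so it is open with probability at least $p$, and the number of open levels among the first $\ell-1$ stochastically dominates a $\mathrm{Binomial}(\ell-1,p)$ random variable. A Chernoff bound then yields constants $C,c>0$ depending only on $p$ and $K$ such that
\[
\pr\bigl[h(v,K)\ge\ell\bigr]\le C e^{-c\ell},\qquad v\in\mathbb{Z}^2,\ \ell\ge 1.
\]

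Finally, a union bound over $v_1\in\mathbb{Z}^2\setminus R_2$ partitioned by $m=d(v_1,R_1)\in\{0,1,2,\dots\}$, combined with the fact that the number of lattice sites at $L_1$-distance exactly $m$ from $R_1$ is bounded by a constant times $A+B+m$, gives
\[
\pr\bigl[D(A,B)\bigr]\le\sum_{m\ge 0}C_1(A+B+m)e^{-c(m+\min(A,B))}\le C_2(A+B)e^{-c\min(A,B)}=O\bigl(n^2 e^{-c'n}\bigr),
\]
which tends to $0$ as $n\to\infty$. The principal technical point is the geometric lower bound $d(v_2,R_1)\ge\min(A,B)$; once this is in hand, the rest is a routine tail-plus-union-bound argument.
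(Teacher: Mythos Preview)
Your proof is correct and in fact cleaner than the paper's. Both arguments reduce the event to the existence of a single long jump and then use a union bound, but they organize this differently.

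The paper splits into two cases according to whether the pre-jump site $v_1$ lies below time $0$ or to the side of $R_2$ at a time in $[0,B]$, and in each case invokes the column quantity $H(v)$ from \eqref{H(u)} (which exploits the ``topmost open site'' rule) together with polynomial moment bounds; this yields a rate $O(1/n)$. You instead run a single unified argument: the $L_1$-additivity along the segment gives $\|v_2-v_1\|_1\ge d(v_1,R_1)+d(v_2,R_1)$, and the key geometric observation $d(v_2,R_1)\ge\min(A,B)$ (from $v_2(2)\ge 0$ and $v_2\notin R_2$) replaces the case split entirely. Combined with the stochastic domination of $h(v,K)$ by a negative binomial---which you derive from the independence of the level events $\{L(v,j)\text{ open}\}$---you obtain an exponential rate $O(n^2e^{-c'n})$ rather than polynomial. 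Your approach does not use the ``topmost'' jump rule at all (only $W_v\le K$), so it would apply equally well to other variants of the model with bounded level choices.
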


\medskip

\begin{proof}

Recall from (\ref{H(u)}) the definition of the random variable 
\begin{align*}
    H(v):=\inf\Big\{n\geq 1: \sum_{j=1}^{n}\mathbbm{1}_{\{(v(1),v(2)+j)\text{ is open}\}}=K\Big\} \, ,
\end{align*}
where $v\in\Z^2$ and $K\in\N$ is such that $\pr[W_v\leq K]=1$. Put $H=H((0,0))$ and note that $H$ has negative binomial distribution of parameters $p$ and $K$, thus it has finite absolute moments of any order. Recall that jumps are made to the upmost open site on a chosen open level. If some path in $D(n\rho\sigma,n^2t\gamma)$ comes from $\Z\times\Z_{-}$ before crossing  $R(0,0;2n\rho\sigma,2n^2t\gamma)$ then $H(v)>n\rho\sigma$ for some $v\in\{-n\rho\sigma,\dots,n\rho\sigma\}\times\{0\}$, and 
\begin{align*}
    &\pr\big[H(v)>n\rho\sigma \text{ for some } v\in\{-n\rho\sigma,\dots,n\rho\sigma\}\times\{0\}\big]\\
    &\leq 2n\rho\sigma\pr[H>n\rho\sigma]\\&
    \leq 2n\rho\frac{\esp[H^2]}{(n\rho\sigma)^2}\rightarrow 0 \text{ as } n \text{ goes to infinity.}
\end{align*}
Any other path in $D(n\rho\sigma,n^2t\gamma)$ come from points with first component bigger than $2n\rho\sigma$ or smaller than $-2n\rho\sigma$ and second component in $\{0,\dots,n^2t\gamma\}$. If from some $v=(v(1),v(2))$ with $v(1)> 2n\rho\sigma$ and $v(2)\in\{0,\dots,n^2t\gamma\}$ the path crosses $R(0,0;2n\rho\sigma,2n^2t\gamma)$ then $H(v)>v(1)$. In case that $v(1)<-2n\rho\sigma$ we have that $H(v)>-v(1)$. Then the probability that one of these paths crosses $R(0,0;2n\rho\sigma,2n^2t\gamma)$ is bounded by
\begin{align*}
    2\sum_{j=1}^{2n^2t\gamma}\sum_{v\in \{(2n\rho\sigma,\infty)\cap\Z\}\times\{j\}}\pr[H(v)>v(1)]&\leq 2(2n^2t\gamma)\sum_{i\geq 1}\pr[H>i+2n\rho\gamma]\\
    &\!\!\!\!\!\!\!\!\!\!\!\!\!\!\!\!\!\!\!\!\!\!\!\!\!\!\!\!\!\! \leq2(2n^2t\gamma)\sum_{i\geq 1}\frac{\esp[H^6]}{(i+2n\rho\sigma)^6}\\
    &\!\!\!\!\!\!\!\!\!\!\!\!\!\!\!\!\!\!\!\!\!\!\!\!\!\!\!\!\!\! \leq2(2n^2t\gamma)\sum_{i\geq 1}\frac{\esp[H^6]}{i^3(2n\rho\sigma)^3} = \frac{C(t,\rho)}{n} \rightarrow 0 \text{ as } n\rightarrow\infty.
\end{align*}
This completes the proof.
\end{proof}

\medskip

We still need one last lemma to prove Proposition \ref{tightness}. The result is important to control how much two paths in the GRDF can become far apart each other after crossing and before coalescence.

\medskip

\begin{lemma}\label{tau(x,y,u^+)} Let $x,y,x_1,\dots,x_m$ be distinct points in $\Z$ with $x<y$.  Define $u=(x,0)$ and $v=(y,0)$. Consider the random times $\{T_n:n\geq 1\},\{\tau_n(u):n\geq 1\}$ and $\{\tau_n(v):n\geq 1\}$ as introduced in Corollary \ref{renewal time corollary} for the points $u$, $v$, $(x_1,0)$,\dots,$(x_m,0)$. 
Put $Y_n:= X_{\tau_n(u_0)}^{u}(1)-X_{\tau_n(u_l)}^{v}(1)$, $n\ge 1$, which are the increments between $X^u$ and $X^v$ on the common renewal times for $u$, $v$, $(x_1,0)$,\dots,$(x_m,0)$. Define the random times
\begin{align*}
\nu_{x,y} = \inf\{ j \ge 0 : Y_j = 0\} \quad \textrm{and} \quad \nu_{x,y,\rho^+}:=\inf\big\{j \geq 0: Y_j \ge n \rho \sigma \big\},
\end{align*}
then for $\rho>0$ there exists a constant $C$ depending only on $t$ and $\rho$ such that for all $n$ large enough we have that
\begin{align*}
\pr\big[\nu_{x,y,\rho^+}<\nu_{x,y}\wedge(n^2t\gamma)\big]<\frac{C(t,\rho)}{n} \, .
\end{align*}
\end{lemma}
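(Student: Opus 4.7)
The plan is to exploit that $(Y_j)_{j\ge 0}$ is a mean-zero square-integrable martingale (per the Remark in Section~\ref{sec:skorohod}), combined with the crossing-overshoot bound of Lemma~\ref{crossing lower bound} and the geometric decay of the number of sign changes from Lemma~\ref{a_l}(iv). Since $Y_0=x-y<0$ and the event of interest forbids $Y$ from ever taking the value $0$, the process must cross the axis at least once to reach level $n\rho\sigma$; on each such crossing the overshoot is stochastically dominated by the integrable random variable $\widehat{R}$ given by Lemma~\ref{crossing lower bound} (with the symmetric analog covering crossings in the opposite direction, via the symmetry of the GRDF).

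First I would decompose the event $\{\nu_{x,y,\rho^+}<\nu_{x,y}\wedge n^2t\gamma\}$ according to the sign changes of $Y$. Set $\zeta_0=0$ and, for $k\ge 1$, $\zeta_k:=\inf\{j>\zeta_{k-1}: \mathrm{sign}(Y_j)\neq \mathrm{sign}(Y_{\zeta_{k-1}})\}$. Repeating the proof of Lemma~\ref{a_l}(iv) from $\zeta_1$ onward --- where the martingale restarts from a state whose absolute value is dominated by $\widehat R$ --- yields constants $c_1<1$ and $C_0<\infty$, not depending on $(x,y)$, such that $\pr[Y_{\zeta_j}\neq 0 \text{ for all } j=1,\dots,k]\le C_0 c_1^k$. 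For each odd-indexed excursion of $Y$ above $0$, I apply the optional stopping theorem to the martingale $Y$ at $\tau_k:=\inf\{j>\zeta_k: Y_j\le 0 \text{ or } Y_j\ge n\rho\sigma\}\wedge(n^2t\gamma)$, which is bounded and so makes OST automatic: writing $R_+:=Y_{\zeta_k}$,
\begin{align*}
R_+ = \esp[Y_{\tau_k}\mid \mathcal F_{\zeta_k}] \ge n\rho\sigma\,\pr[Y_{\tau_k}\ge n\rho\sigma\mid \mathcal F_{\zeta_k}] - \esp\bigl[|Y_{\tau_k}|\,\mathbf{1}_{\{Y_{\tau_k}\le 0\}}\mid \mathcal F_{\zeta_k}\bigr].
\end{align*}
The undershoot in the last term is again stochastically dominated by $\widehat R$, so the conditional probability of reaching $n\rho\sigma$ in this excursion is at most $(R_++\esp[\widehat R])/(n\rho\sigma)$. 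Summing over all positive excursions, using the geometric bound on the number of sign changes and the stochastic dominance $R_+\le^{\mathrm{st}}\widehat R$, yields $\pr[\nu_{x,y,\rho^+}<\nu_{x,y}\wedge n^2t\gamma]\le C/(n\rho\sigma)$, which gives the claim.

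The main obstacle is uniformity in the initial state $x-y$, which is an arbitrary negative integer. For sign changes after the first, Lemma~\ref{crossing lower bound} provides overshoot bounds uniform in the current state. For the very first crossing from $Y_0=x-y$, one observes that the overshoot at $\zeta_1$ is determined by the last negative-to-positive Skorohod step, and this step is controlled by the symmetric version of Lemma~\ref{crossing lower bound} uniformly in the negative state just before. The truncation at $n^2t\gamma$ serves only to legitimise the optional-stopping step through a bounded stopping time and to absorb lower-order corrections into the constant $C(t,\rho)$; the essential $1/n$ gain arises from the factor $n\rho\sigma$ in the denominator of the excursion bound combined with the uniformly integrable expected overshoots.
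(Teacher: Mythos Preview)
Your approach is correct but takes a genuinely different route from the paper. The paper follows \cite{coletti2014convergence} (Lemma~3.2) and \cite{newman2005convergence} (Proposition~2.4): it applies the strong Markov property at $\nu_{x,y,\rho^+}$ to obtain
\[
\pr(\nu_{x,y}>n^2\gamma t)\ \ge\ \pr(\nu_{x,y,\rho^+}<n^2\gamma t\wedge\nu_{x,y})\cdot\inf_{l}\pr\bigl(Y^l\in B^l(n\sigma\rho,n^2\gamma t)\bigr),
\]
bounds the infimum from below by a constant $c(t,\rho)>0$ via Donsker, and then invokes the coalescing-time estimate $\pr(\nu_{x,y}>n^2\gamma t)\le C|y-x|/n$ from Corollary~\ref{tau(0,le)}. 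This yields $\tilde C(t,\rho)|y-x|/n$, and the removal of the $|y-x|$ factor is then deferred to the argument of Proposition~2.4 in \cite{newman2005convergence}.

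Your excursion decomposition with optional stopping bypasses the coalescing-time bound altogether and delivers uniformity in $|y-x|$ in one stroke, because the overshoot control of Lemma~\ref{crossing lower bound} is already uniform in the state before the crossing; this is more self-contained than the paper's route. Two points deserve more care than you indicate. First, Lemmas~\ref{crossing lower bound} and~\ref{a_l} are proved for the two-path renewal structure, while here $Y$ uses the common renewals of $m+2$ paths; the proofs carry over verbatim since the relevant bound $|Y_1-Y_0|\le 2Z$ still holds with $Z$ from Corollary~\ref{renewal time corollary}, but this should be stated. Second, to pass from the conditional domination $R_+\le^{\mathrm{st}}\widehat R$ to control of $\sum_k\esp[R_+\mathbf{1}_{\{\zeta_k<\nu_{x,y}\}}]$ you really need the i.i.d.\ dominating sequence of Lemma~\ref{R_j}, not just Lemma~\ref{crossing lower bound}, since you are simultaneously conditioning on not having hit zero at the previous sign changes.
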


\smallskip


\begin{proof} The proof is analogous to the proof of Lemma $3.2$ in \cite{coletti2014convergence}. For $l\in\Z$ consider $u_0=(0,0)$, $u_l=(l,0)$ and the random times $(\tau_n(u_0))_{n\geq 1}, (\tau_n(u_l))_{n\geq 1}$ as introduced in Corollary \ref{renewal time corollary} for the points $u_0,u_l$. Define the following random walk
\begin{align*}
    Y_0^l:=l, Y_n^l:= X_{\tau_n(u_0)}^{u_0}(1)-X_{\tau_n(u_l)}^{u_l}(1) \text{ for } n\geq 1.
\end{align*}
Let $B^l(x,t)$ be the set of trajectories that remain in the interval $[l-x,l+x]$ during the time $[0,t]$. As in Proposition 2.4 in \cite{newman2005convergence}, by the independence of the increments which implies the strong Markov property, we have that 
\begin{align*}
    \pr(\nu_{x,y}>n^2\gamma t)\geq\pr(\nu_{x,y,\rho^+}<n^2\gamma t\wedge \nu_{x,y} )\inf_{l\in\Z}\pr(Y^l\in B^l(n\sigma \rho,n^2\gamma t)).
\end{align*}
Note that 
\begin{align*}
    \inf_{l\in\Z}\pr\big(Y^l\in B^l(n\sigma\rho,n^2\gamma t)\big)=1-\sup_{l\in\Z}\pr\Big(\sup_{i\leq n^2\gamma t}|Y^l_i- l|\geq n\sigma \rho\Big).
\end{align*}
We have that
$$
    \limsup_{n\rightarrow\infty} \sup_{l\in\Z}\pr\Big(\sup_{i\leq n^2\gamma t}|Y^l_i-l|>n\sigma \rho\Big) \, ,
$$
is bounded above by
\begin{align*}    
    & \limsup_{n\rightarrow\infty} \sup_{l\in\Z}\pr\Big(\sup_{i\leq n^2\gamma t}|X_{\tau_i(u_0)}^{u_0}(1)| +|X_{\tau_i(u_l)}^{u_l}(1)-l|> \frac{n\sigma \rho}{2} \Big)\\
    & \qquad \qquad \qquad \leq 2 \limsup_{n\rightarrow\infty} \sup_{l\in\Z}\pr\Big(\sup_{i\leq n^2\gamma t}|X_{\tau_i(u_0)}^{u_0}(1)|> \frac{n\sigma \rho}{4} \Big)\\
    & \qquad \qquad \qquad \leq 4\pr\Big(N>\frac{\rho}{4\sqrt{t}}\Big)=4e^{-\frac{\rho^2}{32 \, t}} \, ,
\end{align*}
where $N$ is a standard normal random variable and the last inequality is a consequence of Donsker's Theorem, see also Lemma 2.3 in \cite{newman2005convergence}. Hence
$$
\inf_{l\in\Z}\pr\big(Y^l\in B^l(n\sigma\rho,n^2\gamma t)\big)
$$
is bounded from below by a constant that depends only on $t$ and $\rho$. So
using Proposition \ref{tau(u,v)} we obtain a constant $\tilde{C}(t,\rho)$ such that 
\begin{align*}
\pr(\nu_{x,y,\rho^+}<n^2\gamma t\wedge \nu_{x,y} )
\leq \frac{\pr(\nu_{x,y}>n^2\gamma t)}{\inf_{l\in\Z}\pr\big(Y^l\in B^l(n\sigma\rho,n^2\gamma t)\big)} \leq \frac{\tilde{C}(t,\rho) \, |y-x|}{n} \, .
\end{align*}
From the previous inequality we should follow the same steps as in the proof of Proposition 2.4 in \cite{newman2005convergence} to get an upper bound that do not depend on $|y-x|$.
\end{proof}

\medskip

\begin{proof}[Proof of Proposition \ref{tightness}] 
Let $\pi_1,\pi_2,\pi_3,\pi_4$ be the paths that start in $5\lfloor n\rho\sigma\rfloor$, $9\lfloor n\rho\sigma \rfloor$, $13\lfloor n\rho\sigma\rfloor$ and $17\lfloor n\rho\sigma\rfloor$ respectively at time zero. Let us denote by $B^{n,t}_i$, $i=1,\dots,4$, the event that $\pi_i$ stays within distance $n\rho\sigma$ of $\pi_i(0)$ until time $2tn^2\gamma$, see Figure 8 below. From the invariance principle we have that 
$$
\lim_n\pr[(B_i^{n,t})^c]=\pr \Big[\sup_{s\in[0,t]}|\mathcal{B}_s|>\rho\Big]\leq4e^{-\frac{\rho^2}{2t}}
$$
for all $i=1,\dots,4$. Then
\begin{align}
\label{Bint}
\frac{1}{t} \lim_{n \rightarrow \infty} \pr\Big[(B_i^{n,t})^c\Big]\rightarrow 0 \text{ as } t\rightarrow 0^+.    
\end{align}
Recall the definition of the set $D(n\rho\sigma,n^2t\gamma)$ from the statement of Lemma \ref{D(nu)} and note that
\begin{eqnarray*}
 \lefteqn{\lim_{t\rightarrow 0^+}\frac{1}{t}\limsup_{n \rightarrow \infty} \pr\Big[A^+_{\mathcal{X}_n}(0,0;\rho,t)\Big]= \lim_{t\rightarrow 0^+}\frac{1}{t}\limsup_{n \rightarrow \infty} \pr\Big[A^+_{\mathcal{X}}(0,0;\rho n\sigma,tn^2\gamma)\Big]}\\ 
&\leq \lim_{t\rightarrow 0^+}\frac{1}{t}\limsup_{n \rightarrow \infty} \pr\Big[D(n\rho\sigma,tn^2\gamma)\Big]+4 \lim_{t\rightarrow 0^+}\frac{1}{t}\lim_{n \rightarrow \infty} \pr\Big[(B_1^{n,t})^c\Big]\\&+ \lim_{t\rightarrow 0^+}\frac{1}{t}\limsup_{n \rightarrow \infty} \pr\Big[A^+_{\mathcal{X}}(0,0;\rho n\sigma,tn^2\gamma),\cap_{i=1}^4B_i^{n,t},\big(D(n\rho\sigma,tn^2\gamma)\big)^c\Big].
\end{eqnarray*}
By \eqref{Bint} and Lemma \ref{D(nu)}, to finish the proof of Proposition \ref{tightness}, we only have to prove (see Figure 8) that for every $t>0$
\begin{align}
\label{TT}
    \limsup_{n \rightarrow \infty} \pr\Big[A^+_{\mathcal{X}}(0,0;\rho n\sigma,tn^2\gamma),\cap_{i=1}^4B_i^{n,t},(D(n\rho\sigma,tn^2\gamma))^c\Big]=0 \, .
\end{align}
Fix some $(x,m)\in R(0,0;2n\rho\sigma,2n^2t\gamma)$ and take $(T_i)_{i\geq 1}$ as the renewal times introduced in Corollary \ref{renewal time corollary} for the points $(5\lfloor n\rho\sigma\rfloor,0)$,$(9\lfloor n\rho\sigma\rfloor,0)$,$(13\lfloor n\rho\sigma\rfloor,0)$, $(17\lfloor n\rho\sigma\rfloor,0)$ and the $(x,m)$. Put $T_0 = m$ and $Y^{(x,m)}_i = X^{(x,m)}_{T_i}$, $i\ge 0$. Define the stopping time $\nu_j^{(x,m)}$ (with respect to $(\mathcal{F}^{(x,m)}_n)_{n\ge 1}$ defined on Lemma \ref{tpr}), for $j=1,\dots,5$, as the first time that the random walk $\big(Y_i^{(x,m)}\big)_{i\geq 0}$ exceeds $(4j-1)\lfloor n\rho\sigma\rfloor$, and the random time $\nu^{(x,m)}$ the first time that $\pi^{(x,m)}$ exceeds $20 n\rho\sigma$. Then
\begin{align}
\label{TT1}
\pr\big[\nu^{(x,m)}<4n^2t\gamma,\cap_{i=1}^4B_i^{n,t}\big] & \leq\pr\big[T_{\nu_5^{(x,m)}}<4n^2t\gamma,\cap_{i=1}^4B_i^{n,t}\big] \nonumber \\
& \quad + \pr\big[\nu^{(x,m)}<4n^2t\gamma,T_{\nu_5^{(x,m)}}\geq 4n^2t\gamma\big].
\end{align}

Note that on the event 
$$
\big\{\nu^{(x,m)}<4n^2\gamma t,T_{\nu_5^{(x,m)}}\geq 4n^2t\gamma \big\}
$$ 
the path $\pi^{(x,m)}$ crosses the interval $\big(19\lfloor n\rho\sigma \rfloor,20 n\rho\sigma \big)$ without renewal before time $n^2t\gamma$. Since the displacement between consecutive renewal times is bounded by some random variable $Z$ with finite moments, and up to time $n^2\gamma t$ the number of renewals is bounded by $n^2\gamma t$, we have that
\begin{align}
\label{TT2}
\pr\big[\nu^{(x,m)}<4n^2\gamma t,T_{\nu_5^{(x,m)}} \geq 4n^2t\gamma\big] & \leq n^2t\gamma\pr\big[Z> n\rho\sigma \big] \leq \frac{n^2t\gamma\esp[Z^6]}{(n\rho\sigma)^6} \le \frac{C_1}{n^4}.
\end{align}

\begin{figure}
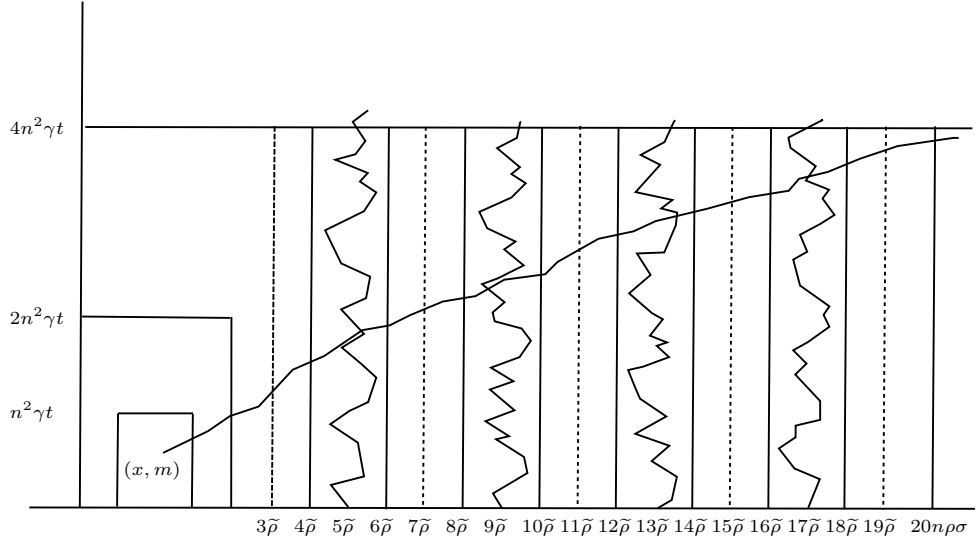

\label{fig:tight}
\begin{overpic}[scale = .9]
{tightnes.eps}
\put(10,4){\tiny{$(x,m)$}}
\put(-2,40) {\tiny{$4n^2\gamma t$}} \put(-2,20){\tiny{{$2n^2\gamma t$}}} \put(-2,10){\tiny{{$n^2\gamma t$}}}
\put(24,-2) {\tiny{$3\widetilde{\rho}$}} \put(28,-2) {\tiny{$4\widetilde{\rho}$}} \put(32,-2) {\tiny{$5\widetilde{\rho}$}} \put(36,-2) {\tiny{$6\widetilde{\rho}$}}
\put(40,-2) {\tiny{$7\widetilde{\rho}$}} \put(44,-2) {\tiny{$8\widetilde{\rho}$}} \put(48,-2) {\tiny{$9\widetilde{\rho}$}} \put(52,-2) {\tiny{$10\widetilde{\rho}$}}
\put(56,-2) {\tiny{$11\widetilde{\rho}$}} \put(60,-2) {\tiny{$12\widetilde{\rho}$}} \put(64,-2) {\tiny{$13\widetilde{\rho}$}} \put(68,-2) {\tiny{$14\widetilde{\rho}$}}
\put(72,-2) {\tiny{$15\widetilde{\rho}$}} \put(76,-2) {\tiny{$16\widetilde{\rho}$}} \put(80,-2) {\tiny{$17\widetilde{\rho}$}} \put(84,-2) {\tiny{$18\widetilde{\rho}$}}
\put(88,-2) {\tiny{$19\widetilde{\rho}$}} \put(93,-2) {\tiny{$ 20 n\rho\sigma$}}
\end{overpic}
\caption{Realization of $A^+_{\mathcal{X}}(0,0;\rho n\sigma,tn^2\gamma) \, \cap \, \cap_{i=1}^4B_i^{n,t}$ where $A^+_{\mathcal{X}}(0,0;\rho n\sigma,tn^2\gamma)$ occurs because the path $\pi^{(x,m)}$, for some $(x,m) \in R(0,0;\rho n\sigma,tn^2\gamma)$, touchs the right boundary of the rectangle $R(0,0;20\rho n\sigma,4tn^2\gamma)$. Notation: $\tilde{\rho} = \lfloor n \sigma \rho \rfloor$. }
\end{figure}

We also have
\begin{align}
\label{TT3}
&\pr\Big[T_{\nu_5^{(x,m)}}<4n^2t\gamma,\cap_{i=1}^4B_i^{n,t}\Big] \nonumber \\
&\leq
\pr\Big[Y_{\nu_j^{(x,m)}}^{(x,m)}\leq \big(4j-\frac{1}{2}\big)\lfloor n\rho\sigma\rfloor, \, j=1,...5, \, T_{\nu_5^{(x,m)}}<4n^2t\gamma,\cap_{i=1}^4B_i^{n,t}\Big] \nonumber \\
& \qquad \qquad \qquad \qquad  + \sum_{j=1}^5 \pr\Big[Y_{\nu_j^{(x,m)}}^{(x,m)}> \big(4j-\frac{1}{2}\big)\lfloor n\rho\sigma\rfloor\Big] \nonumber \\
&\leq \pr\Big[Y_{\nu_j^{(x,m)}}^{(x,m)}\leq \big(4j-\frac{1}{2}\big)\lfloor n\rho\sigma\rfloor, \, j=1,...5, \, T_{\nu_5^{(x,m)}}<4n^2t\gamma,\cap_{i=1}^4B_i^{n,t}\Big] \nonumber \\
&\qquad \qquad \qquad \qquad +5\sup_{x\in\Z_{-}}\pr\Big[Y^{(x,m)}_{\nu_{^+}^x}>\frac{\lfloor n\rho\sigma\rfloor}{2}\Big].
\end{align}
By Lemma \ref{tau_0^+} and Corollary \ref{renewal time corollary} there exists a constant $C_2 >0$ such that 
\begin{align*}
\sup_{x\in\Z_{-}}\pr\Big[Y^{(x,m)}_{\nu_{^+}^x}>\frac{\lfloor n\rho\sigma\rfloor}{2}\Big]\leq \frac{C_2}{n^{4}}.
\end{align*} 
Using the strong Markov property and Lemma \ref{tau(x,y,u^+)} we get a constant $C_3 >0$ such that 
\begin{align*}
&\pr\Big[Y_{\nu_j^{(x,m)}}^{(x,m)}\leq \big(4j-\frac{1}{2}\big)\lfloor n\rho\sigma\rfloor, \, j=1,...5, \, T_{\nu_5^{(x,m)}}<4n^2t\gamma,\cap_{i=1}^4B_i^{n,t}\Big]\\ 
&\le \pr\Big[\vartheta_{x,y,\rho}^+<\vartheta_{x,y}\wedge(n^2t\gamma)\Big]^4 \leq\frac{C_3}{n^4}.
\end{align*}
Hence by \eqref{TT3}
\begin{align}
\label{TT4}
\pr\Big[T_{\nu_5^{(x,m)}}<4n^2t\gamma,\cap_i^4B_i^{n,t}\Big]\leq \frac{C_3}{n^4}+\frac{5C_2}{n^{4}}.   
\end{align}
We can go back to \eqref{TT1}, use \eqref{TT2}, \eqref{TT3} and \eqref{TT4} to conclude that  
$$
\pr\big[\nu^{(x,m)}<4n^2t\gamma,\cap_{i=1}^4B_i^{n,t}\big] \le \frac{(5C_1 + C_2 + C_3)}{n^4} \, .
$$
Therefore we can estimate the probability in \eqref{TT} as
\begin{align*}
    &\pr\Big[A^+_{\mathcal{X}}(0,0;\rho n\sigma,tn^2\gamma),\cap_{i=1}^4B_i^{n,t},\{D(n\rho\sigma,n^2t\gamma)\}^c\Big]\\&\leq \pr\Big[\exists (x,m)\in R(0,0;2n\rho\sigma,2n^2t\gamma); \nu^{(x,m)}<4n^2t\gamma,\cap_{i=1}^4B_i^{n,t}\Big].
\end{align*}
Since $|R(0,0;2n\rho\sigma,2n^2t\gamma)| \le 8 t \rho \sigma \gamma n^3$ we have that  
\begin{align*}
   &\limsup_{n \rightarrow \infty} \pr\Big[A^+_{\mathcal{X}}(0,0;\rho n\sigma,tn^2\gamma),\cap_{i=1}^4B_i^{n,t},\{D(n\rho\sigma,n^2t\gamma)\}^c\Big] \\
   & \le \limsup_{n \rightarrow \infty} ( 8 t \rho \sigma \gamma n^3 ) \pr\big[\nu^{(x,m)}<4n^2t\gamma,\cap_{i=1}^4B_i^{n,t}\big] \\
   &\leq \limsup_{n \rightarrow \infty}( 8 t \rho \sigma \gamma n^3) \frac{(5C_1 + C_2 + C_3)}{n^4} = 0.
\end{align*}
\end{proof}

\bigskip

\appendix

\section{A technical estimate}
\label{sec:well}

\begin{lemma}\label{sum moments}
 Let $N$ be some positive integer random variable and $(\zeta_n)_{n\geq 1}$ a non-negative sequence of identically distributed random variables. If for some $k\geq 1, \delta >0 \text{ and } l>\frac{(k+2)(1+\delta)}{\delta}$ we have $\esp[\zeta_1^{k(1+\delta)}]$ and $\esp[N^l]$ finite, then for $S:=\sum_{n=1}^{N}\zeta_n$ we get that $\esp[S^k]$ is also finite.
 \end{lemma}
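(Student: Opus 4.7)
The plan is to decompose $\esp[S^k]$ by conditioning on the value of $N$ and then apply Hölder's inequality twice to handle the lack of independence between the $\zeta_i$'s and $N$.

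First, since $k\ge 1$, the $c_r$-inequality (Jensen applied to $x\mapsto x^k$) gives $S^k = \bigl(\sum_{i=1}^N \zeta_i\bigr)^k \le N^{k-1}\sum_{i=1}^N \zeta_i^k$ on $\{N=n\}$. Therefore
\begin{align*}
\esp[S^k] \;=\; \sum_{n=1}^\infty \esp\bigl[S^k \mathbbm{1}_{\{N=n\}}\bigr] \;\le\; \sum_{n=1}^\infty n^{k-1}\sum_{i=1}^n \esp\bigl[\zeta_i^k \mathbbm{1}_{\{N=n\}}\bigr].
\end{align*}
Next, I would apply Hölder's inequality to each term $\esp[\zeta_i^k \mathbbm{1}_{\{N=n\}}]$ with conjugate exponents $1+\delta$ and $(1+\delta)/\delta$. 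Using the common distribution of the $\zeta_i$'s, this gives the bound $C^{1/(1+\delta)}\pr[N=n]^{\delta/(1+\delta)}$ where $C=\esp[\zeta_1^{k(1+\delta)}]<\infty$. Substituting and summing over $i$ leads to
\begin{align*}
\esp[S^k] \;\le\; C^{1/(1+\delta)} \sum_{n=1}^\infty n^k \,\pr[N=n]^{\delta/(1+\delta)}.
\end{align*}

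The main (and only) remaining step is to show that this series converges using $\esp[N^l]<\infty$ with $l>(k+2)(1+\delta)/\delta$. Set $\alpha:=\delta/(1+\delta)\in(0,1)$ and pick $\beta=\alpha l=\delta l/(1+\delta)$. Writing
\begin{align*}
n^k \pr[N=n]^{\alpha} \;=\; n^{k-\beta}\cdot\bigl(n^{\beta/\alpha}\pr[N=n]\bigr)^{\alpha},
\end{align*}
a final application of Hölder with exponents $1/(1-\alpha)$ and $1/\alpha$ yields
\begin{align*}
\sum_{n=1}^\infty n^k \pr[N=n]^{\alpha} \;\le\; \Bigl(\sum_{n=1}^\infty n^{(k-\beta)/(1-\alpha)}\Bigr)^{1-\alpha}\Bigl(\sum_{n=1}^\infty n^{l}\pr[N=n]\Bigr)^{\alpha}.
\end{align*}
The second factor equals $\esp[N^l]^\alpha<\infty$, and the first factor converges provided $(k-\beta)/(1-\alpha)<-1$, i.e.\ $\beta>k+(1-\alpha)=k+1/(1+\delta)$. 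With our choice $\beta=\delta l/(1+\delta)$, this reduces to $l>(k(1+\delta)+1)/\delta$, which is strictly weaker than the hypothesis $l>(k+2)(1+\delta)/\delta$.

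I do not anticipate a genuine obstacle: the structure of the problem is standard (moment bounds for random sums without independence) and reduces to two applications of Hölder. The only point that requires minor care is the choice of the exponent $\beta$ in the second Hölder step, which must simultaneously match the moment budget on $N$ and the polynomial weight $n^k$; the gap between the hypothesis and the sharp threshold confirms the slack needed for strict summability.
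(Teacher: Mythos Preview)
Your proof is correct and follows essentially the same approach as the paper: decompose over $\{N=n\}$, apply H\"older with exponents $1+\delta$ and $(1+\delta)/\delta$ to separate $\zeta_i^k$ from $\mathbbm{1}_{\{N=n\}}$, then show the resulting series in $n$ converges using the moment assumption on $N$. The only differences are cosmetic refinements on your side: you use the sharper power-mean bound $S^k\le N^{k-1}\sum\zeta_i^k$ where the paper uses the cruder $S^k\le N^k\sum\zeta_i^k$ (via $S\le N\max_i\zeta_i$), and you close with a second H\"older on the series whereas the paper simply bounds $\pr[N=n]\le \esp[N^l]/n^l$ by Markov. Both tweaks buy you one power of $n$, so your argument in fact works under the weaker hypothesis $l>(k(1+\delta)+1)/\delta$, but the structure is the same.
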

 
 \begin{proof}
 We have that $0\leq S\leq N\max_{1\leq j\leq N}\zeta_j$ what implies that 
$$
S^k\leq N^k\max_{1\leq j\leq N}\zeta^k_j\leq N^k\sum_{j=1}^{N}\zeta^k_j.
$$ 
Hence
 \begin{align*}
 \esp\big[S^k\big]&\leq\esp\Big[N^k\sum_{j=1}^{N}\zeta_j^k\Big]=\sum_{n=1}^{\infty}n^k\sum_{j=1}^{n}\esp\big[\mathbbm{1}_{\{N=n\}}\zeta^k_j\big].
 \end{align*}
 Applying H\"older inequality we get
 \begin{align*}
 \esp\big[S^k\big]\leq\sum_{n=1}^{\infty}n^k\sum_{j=1}^{n}\esp\big[\zeta_j^{k(1+\delta)}\big]^{\frac{1}{1+\delta}}\pr[N=n]^\frac{\delta}{1+\delta}=\esp\big[\zeta_1^{k(1+\delta)}\big]^{\frac{1}{1+\delta}}\sum_{n=1}^{\infty}n^{k+1}\pr[N=n]^{\frac{\delta}{1+\delta}}.
 \end{align*}
 Applying Chebyshev inequality we get that $\esp[S^k]$ is bounded above by
 \begin{align*}
 \esp\big[\zeta_1^{k(1+\delta)}\big]^{\frac{1}{1+\delta}}\sum_{n=1}^{\infty}n^{k+1}\frac{\esp[N^l]^\frac{\delta}{1+\delta}}{n^{\frac{l\delta}{1+\delta}}}=\esp\big[\zeta_1^{k(1+\delta)}\big]^{\frac{1}{1+\delta}}\esp[N^l]^{\frac{\delta}{1+\delta}}\sum_{n=1}^{\infty}\frac{1}{n^{\frac{l\delta}{1+\delta}-(k+1)}}<\infty.
 \end{align*}
 \end{proof}
 
\bigskip

\noindent \textit{Acknowledgments:} We would like to thank Maria Eulalia Vares, Leandro Pimentel and Luiz Renato Fontes for useful comments and suggestions. We would also like to thank the anonymous referee for useful feedback and suggestions regarding the paper.

\medskip

\noindent \textit{Data availability statement:} Data sharing not applicable to this article as no datasets were generated or analysed during the current study.

\medskip

\noindent \textit{Conflict of interest statement:} The authors have no competing interests to declare that are relevant to the content of this article.

\medskip

\markboth{Refereces}{References}
\bibliographystyle{plain}
\bibliography{reference1}
\end{document}